\newtheorem{theorem}{Theorem}[section]
\newtheorem{proposition}[theorem]{Proposition}
\newtheorem{problem}[theorem]{Problem}
\newtheorem{claim}[theorem]{Claim}
\newtheorem{lemma}[theorem]{Lemma}
\theoremstyle{remark}
\newtheorem{remark}[theorem]{Remark}
\theoremstyle{definition}
\newtheorem{definition}[theorem]{Definition}
\newtheorem{example}[theorem]{Example}
\numberwithin{equation}{section}
\newcommand{\R}{\mathbb{R}}
\newcommand{\N}{\mathbb{N}}
\newcommand{\e}{\varepsilon}
\newcommand{\p}{\varphi}
\newcommand{\n}{\left\Vert\cdot\right\Vert}
\newcommand{\nn}[1]{{\left\vert\kern-0.25ex\left\vert\kern-0.25ex\left\vert #1 
\right\vert\kern-0.25ex\right\vert\kern-0.25ex\right\vert}}
\newcommand{\bone}{\text{\usefont{U}{bbold}{m}{n}1}}
\newcommand{\cut}{\mathord{\upharpoonright}}
\renewcommand{\leq}{\leqslant}
\renewcommand{\geq}{\geqslant}
\newcommand{\ro}{\varrho}
\newcommand{\X}{\mathcal{X}}
\newcommand{\Z}{\mathcal{Z}}
\newcommand{\C}{\mathcal{C}}
\newcommand{\K}{\mathcal{K}}
\newcommand{\F}{\mathcal{F}}
\newcommand{\ev}{\mathsf{ev}}
\renewcommand\qedsymbol{$\blacksquare$} 
\newcounter{smallromans}
\newenvironment{romanenumerate}
{\begin{list}{{\normalfont\textrm{(\roman{smallromans})}}}%
  {\usecounter{smallromans}\setlength{\itemindent}{0cm}%
   \setlength{\leftmargin}{5.5ex}\setlength{\labelwidth}{5.5ex}%
   \setlength{\topsep}{.5ex}\setlength{\partopsep}{.5ex}%
   \setlength{\itemsep}{0.1ex}}}%
{\end{list}}
\begin{document}
\title[Norming M-bases: recent results and open problems]{Norming Markushevich bases:\\recent results and open problems}

\author[P.~H\'ajek]{Petr H\'ajek}
\address[P.~H\'ajek]{Department of Mathematics, Faculty of Electrical Engineering, Czech Technical University in Prague, Technick\'a 2, 166 27 Prague 6, Czech Republic}
\email{hajekpe8@fel.cvut.cz}

\author[T.~Russo]{Tommaso Russo}
\address[T.~Russo]{Universit\"{a}t Innsbruck, Department of Mathematics, Technikerstra\ss e 13, 6020 Innsbruck, Austria \newline \href{https://orcid.org/0000-0003-3940-2771}{ORCID: \texttt{0000-0003-3940-2771}}}
\email{tommaso.russo@uibk.ac.at, tommaso.russo.math@gmail.com}

\dedicatory{Dedicated to Gilles Godefroy on the occasion of his 70th birthday}
\thanks{Research of P.~H\'ajek was supported in part by GA23-04776S and by grant SGS22/053/OHK3/1T/13 of CTU in Prague.}

\keywords{Norming Markushevich basis, Weakly compactly generated Banach space, Asplund Banach space, Scattered space, Continuous functions on ordinals, semi-Eberlein compact space}
\subjclass[2020]{46B26, 46B20 (primary), and 46E15, 54D30, 54G12, 46B03 (secondary)}
\date{\today}

\begin{abstract} We survey several results concerning norming Markushevich bases (M-bases, for short), focusing in particular on two recent examples of a weakly compactly generated Banach space with no norming M-basis and of an Asplund space with norming M-basis that is not weakly compactly generated. We highlight the context for these problems and state several open problems in different directions that arise from these results.
\end{abstract}
\maketitle
\tableofcontents

\section{Introduction}
A sequence $(e_k)_{k=1}^\infty$ in a Banach space $\X$ is a \emph{Schauder basis} if every $x\in \X$ admits a unique representation
\[ x=\sum_{k=1}^\infty x_k e_k, \]
where the series is assumed to converge in the norm of $\X$. This permits an identification between the vector $x\in \X$ and the scalar sequence $(x_k)_{k=1}^\infty$ and allows to view the Banach space $\X$ as a more concrete object whose elements are sequences of reals. Among the advantages of this representation, let us mention for instance that algebraic manipulations become more handy and linear operators can be represented by infinite matrices. Accordingly, Schauder bases have been extensively exploited in Banach space theory to obtain several structural results, see, \emph{e.g.}, \cite{AK, FHHMZ, Singer1}. Despite their usefulness, they have at least two drawbacks: first, the existence of a Schauder basis in $\X$ clearly implies that $\X$ must be separable; second, even though every classical separable Banach space admits one, there are separable Banach spaces without Schauder bases, \cite{Enflo}.

A well-known, yet non-trivial, result due to Banach is that the coefficient functionals $\p_k\colon \X\to \R$ given by $x\mapsto x_k$ are continuous. Additionally, ${\rm span}\{e_k\}_{k=1}^\infty$ is dense in $\X$ and the functionals $\{\p_k\}_{k=1}^\infty$ separate points on $\X$. Therefore, one can consider the following generalisation of a Schauder basis. A sequence $\{e_k;\p_k\}_{k=1}^\infty\subseteq \X\times \X^*$ is a \emph{Markushevich basis} (hereinafter \emph{M-basis}) if $\langle\p_k, e_n\rangle= \delta_{k,n}$, ${\rm span}\{e_k\}_{k=1}^\infty$ is dense in $\X$, and ${\rm span}\{\p_k\}_{k=1}^\infty$ is $w^*$-dense in $\X^*$. This more general definition solves at once the two drawbacks of Schauder bases from the previous paragraph: indeed, it is clear how to formulate the definition of an M-basis in a way that would not require separability, and M-bases are known to exist in all separable Banach spaces \cite{Markushevich} (we will say more on this in Section \ref{sec: example}). In addition, there are natural examples of M-bases that are not Schauder bases, such as, for example, the classical trigonometric system from Fourier Analysis, see Example \ref{ex: trig system}. More importantly, M-bases have been successfully exploited to prove results valid for all separable Banach spaces; as a small sample, let us mention \cite{Ansari, DBS Rotund not LUR, DB FPP, HR densely, HaTa, Herzog}. Before we continue, we should mention the main weakness of M-bases compared to Schauder bases: while it remains true that every vector $x\in \X$ can be unambiguously identified with its coordinates $(\langle\p_k, x\rangle) _{k=1}^\infty$, the series $\sum_{k=1}^\infty \langle\p_k, x\rangle e_k$ need not converge in general. \smallskip

However, it is the non-separable framework the one where M-bases have proved being the most fruitful. As a first instance of this, let us mention that every Banach space with an M-basis admits a continuous injection in $c_0(\Gamma)$ and therefore a rotund renorming, see, \emph{e.g.}, \cite[Theorem II.2.4]{DGZ}. In order to achieve stronger renorming properties one frequently requires additional assumptions on the M-basis. Perhaps the most important example is the celebrated Troyanski's renorming theorem \cite{Troyanski LUR} that every Banach space with a strong M-basis admits a locally uniformly rotund (LUR, for short) norm. Let us also refer to \cite[Theorem 3.48]{HMVZ} for a proof and to \cite{DHR fundamental}, \cite{MOTV}, and \cite[Lemma 2.1]{Moreno fund LUR} for related results. Among all properties of M-bases the closest one to renorming theory is that of a norming M-basis, since the very definition of norming M-basis involves an equivalent norm induced by the M-basis itself (see Section \ref{sec: example} for details).

A related use of M-bases in non-separable Banach spaces is to characterise several classes of non-separable Banach spaces by the existence of M-bases with certain additional properties. The most important examples of this are weakly Lindel\"of determined (WLD, for short) and Plichko spaces that we will define in Section \ref{sec: non-sep classes} and will play a role in several parts of the paper. Let us refer to \cite[Chapter 6]{HMVZ} and \cite{Zizler} for more on this perspective; let us also mention in passing a parallel line of research aiming at characterising classes of Banach spaces via certain properties of so called projectional skeletons, see, \emph{e.g.}, \cite{CCS projection, CF skeleton, FM skeleton}.

One of the main open problems in the area, still widely open, has been to detect which class of Banach spaces is characterised by the existence of a norming M-basis. It has been conjectured in the Seventies that there ought to be a connection between norming M-bases and weakly compactly generated Banach spaces (WCG for short). This resulted in a famous open problem, due to John and Zizler \cite{JZ norming WCG}, whether every WCG Banach space admits a norming M-basis and to a question in the opposite direction, due to Godefroy \cite[Problem~112]{GMZ open}, whether Asplund spaces with norming M-bases are WCG. Both these problems have been recently solved in the negative \cite{Hajek, HRST} and we shall give more details on this in Section \ref{sec: WCG}. On the other hand, we also prove in Section \ref{sec: WCG} that the answer to Godefroy's problem is positive under the stronger assumption that $\X$ has countable Szlenk index (Theorem \ref{thm: countable Sz}).

As it turns out, the example from \cite{Hajek} of a WCG Banach space without norming M-basis is even a $\C(\K)$ space, while the construction from \cite{HRST} is based on the construction of a certain compact space, denoted by $\K_\ro$, and then the desired space is a subspace of $\C(\K_\ro)$. This left open the natural problem whether it is possible to obtain an actual $\C(\K)$ space as a counterexample, namely if there is an Asplund $\C(\K)$ space with norming M-basis that is not WCG. In Section \ref{sec: C(K)} we shall discuss some partial results in this direction, taken from \cite{RS C(K)}, and state several open questions that originate from a natural approach to the problem.

In turn, this question is connected to the heredity problem for norming M-bases, \emph{i.e.}, the problem whether the existence of norming M-bases passes to subspaces, that we shall focus on in Section \ref{sec: subspace}. To the best of our knowledge, this problem is still open and we will mention some natural candidates for a counterexample and some classes where a positive result is available. For now, let us just mention two results concerning similar questions: it is known that the existence of an M-basis does not pass to subspaces, while shrinking M-bases do pass to subspaces. \smallskip

The last section of the paper, Section \ref{sec: semi-Eberlein}, focuses on the connection between M-bases, particularly norming M-bases, and descriptive topology. To explain this connection, let us begin with a simple standard fact. Assume that $\{e_k;\p_k\}_{k=1}^\infty$ is an M-basis of a separable Banach space $\X$ and that $\|e_k\|=1$ for all $k\in \N$. Then the evaluation map
\begin{equation}\label{eq: ev map}
    \ev\colon (B_{\X^*},w^*) \to [-1,1]^\N\colon \qquad \p \mapsto (\langle\p, e_k\rangle)_{k=1}^\infty
\end{equation}
is easily seen to be a continuous injection (here, $[-1,1]^\N$ is endowed with the product topology). By the compactness of $(B_{\X^*},w^*)$, it is a homeomorphic embedding, which implies the standard fact that $(B_{\X^*},w^*)$ is metrisable. The same evaluation map $\ev$, now having values in $[-1,1]^\Gamma$, can be defined in presence of any M-basis and it still is a homeomorphic embedding. Depending on the range of $\ev$ one can classify several classes of Banach spaces: for example, when $\X$ is WLD $\ev(\p)$ only has countably many non-zero coordinates, which leads to the notion of a Corson compact space. Similar results are available when $\X$ is WCG, Hilbert generated, or Plichko, as we will explain in Section \ref{sec: non-sep classes}. When the M-basis is $1$-norming, the description of the image of $\ev$ leads to the notion of \emph{semi-Eberlein} compact space, first introduced in \cite{KL}. In Section \ref{sec: semi-Eberlein} we shall define this class of compact spaces, describe some known properties, and state several open problems.

\section{Examples of M-bases}\label{sec: example}
In this section we recall the definitions of various notions related to norming M-bases and give some natural examples of them; in particular, we briefly discuss the existence of (norming) M-bases in separable Banach spaces. \smallskip

For the sake of completeness, we begin by explicitly stating the definition of an M-basis without the separability constraint. Given a Banach space $\X$, a family $\{e_\alpha;\p_\alpha\}_{\alpha\in \Gamma}\subseteq \X\times \X^*$ is a \emph{biorthogonal system} in $\X$ if $\langle\p_\alpha, e_\beta\rangle= \delta_{\alpha,\beta}$ for all $\alpha,\beta \in\Gamma$. The biorthogonal system $\{e_\alpha;\p_\alpha\}_{\alpha\in \Gamma}$ is \emph{fundamental} (or \emph{complete}) if ${\rm span} \{e_\alpha\}_{\alpha\in \Gamma}$ is dense in  $\X$ and it is \emph{total} if ${\rm span} \{\p_\alpha\}_{\alpha\in \Gamma}$ is $w^*$-dense in  $\X^*$ (equivalently, if it separates points of $\X$: the unique vector $x\in \X$ with $\langle\p_\alpha, x\rangle=0$ for all $\alpha\in \Gamma$ is $x=0$). A \emph{Markushevich basis} (henceforth, \emph{M-basis}) is a fundamental and total biorthogonal system. As we said in the Introduction, one frequently requires M-bases with additional properties, that are frequently obtained by strengthening the sense in which the functionals $\{\p_\alpha\} _{\alpha\in \Gamma}$ exhaust the dual. The strongest notion is that of \emph{shrinking} M-basis, obtained by requiring that ${\rm span} \{\p_\alpha\}_{\alpha\in \Gamma}$ is dense in  $\X^*$ (in the norm topology); as we will see in Theorem \ref{thm: shrinking M-basis} this is a rather restrictive definition.

We now introduce the main definition for the paper. Recall that a subspace $\Z$ of $\X^*$ is \emph{norming} if the formula
\[ \|x\|_\Z\coloneqq \sup\{ |\langle\p,x\rangle| \colon \p\in \Z, \|\p\|\leq 1\} \]
defines an equivalent norm on $\X$ (notice that we always have $\|x\|_\Z\leq \|x\|$). Quantitatively, $\Z$ is \emph{$\lambda$-norming} (for $0<\lambda\leq 1$) if
\[ \lambda\|x\|\leq \sup\{ |\langle\p,x\rangle| \colon \p\in \Z, \|\p\|\leq 1\}. \]
By the Hahn--Banach theorem, this is easily seen to be equivalent to
\begin{equation}\label{eq: norming in dual ball}
    \lambda B_{\X^*}\subseteq \overline{\Z\cap B_{\X^*}}^{w^*}.
\end{equation}
Observe that $\Z$ becomes a $1$-norming subspace in the norm $\n_\Z$. Let us refer to \cite[Section 3.4]{GMZ renorm} for some additional elementary remarks on the notion of norming subspaces.

\begin{definition} An M-basis $\{e_\alpha; \p_\alpha\}_{\alpha\in\Gamma}$ in $\X$ is \emph{$\lambda$-norming} ($0<\lambda \leq1$) if ${\rm span}\{\p_\alpha\}_{\alpha\in\Gamma}$ is a $\lambda$-norming subspace of $\X^*$, namely if
\begin{equation}\label{eq: norming M-basis}
    \lambda\|x\|\leq \sup\big\{|\langle\p,x\rangle|\colon \p\in {\rm span}\{\p_\alpha\}_{\alpha\in\Gamma},\, \|\p\|\leq 1 \big\} \qquad(x\in\X).
\end{equation}
\end{definition}

Note that in order to prove that an M-basis is $\lambda$-norming, it is enough to verify \eqref{eq: norming M-basis} for a dense subset of $\X$. Moreover, since every norming subspace is $w^*$-dense, the norming property automatically implies that the biorthogonal system is total. Let us now turn to some natural examples of norming M-bases.
\begin{example}\label{ex: trig system} Let $\mathbb{T}$ be the unit circle in the complex plane and $\mu$ be the normalised Lebesgue measure on $\mathbb{T}$. We shall prove that the trigonometric system is a $1$-norming M-basis in $L_1(\mathbb{T},\mu)$ and $\C(\mathbb{T})$. Consider the functions $e_k(t)\coloneqq e^{ikt}$, $k\in\mathbb{Z}$, that span a dense subspace of both $L_1(\mathbb{T},\mu)$ and $\C(\mathbb{T})$. The biorthogonal functionals $\p_k$ are given by $\p_k=e_{-k}$; more precisely, in $\C(\mathbb{T})$ we should consider the measure $e_{-k}\ {\rm d} \mu$. Thus, $\{e_k;\p_k\}_{k\in\mathbb{Z}}$ is a fundamental biorthogonal system for both $L_1(\mathbb{T},\mu)$ and $\C(\mathbb{T})$; note that ${\rm span} \{\p_k\}_{k\in\mathbb{Z}}$ is the set of trigonometric polynomials.

We now check the norming property. Let $(F_n)_{n=1}^\infty$ be the Fej\'er kernel (see, \emph{e.g.}, \cite{Grafakos} for the definitions and the properties that we shall use). If $f\in L_1(\mathbb{T},\mu)$, take $g\in L_\infty(\mathbb{T},\mu)$ such that $\|g\|_\infty= 1$ and $fg= |f|$. Then the convolution $F_n*g$ converges $\mu$-a.e. to $g$ and $\|F_n*g\|_\infty\leq 1$. Thus by the dominated converge theorem
\[ \int_\mathbb{T} \big(F_n*g \big)f\ {\rm d}\mu \to \|f\|_1 \]
and the trigonometric polynomials $F_n*g$ prove that ${\rm span} \{\p_k\}_{k\in\mathbb{Z}}$ is $1$-norming.

If $f\in \C(\mathbb{T})$, take $\alpha,t_0\in \mathbb{T}$ such that $\|f\|_\infty= |f(t_0)|= \alpha f(t_0)$; then
\[ \int_\mathbb{T} \alpha F_n(t_0-t)f(t)\ {\rm d}\mu(t)= \alpha (F_n*f)(t_0)\to \|f\|_\infty. \]
Thus the trigonometric polynomials $t\mapsto \alpha F_n(t_0-t)$ (that are normalised in $L_1(\mathbb{T},\mu)$) imply that the M-basis is $1$-norming in this case as well.

More generally, $\C(\K)$ has a $1$-norming M-basis whenever $\K$ is a compact Abelian group, \cite[Theorem 4.6]{Kalenda natural}.
\end{example}

\begin{example} Every Schauder basis is a norming M-basis and every monotone Schauder basis is a $1$-norming one. This is elementary and we refer to \cite[Remark 32(3)]{GMZ renorm} for a proof. A similar argument shows that every long unconditional basis is a norming M-basis as well, \cite[Exercise 7.8]{HMVZ}. In particular, the canonical bases of $c_0(\Gamma)$ and $\ell_p(\Gamma)$, $1\leq p<\infty$ are $1$-norming. In the non-separable case the unconditionality assumption is essential, as not every long Schauder basis is norming (see Remark \ref{rmk: long Schauder not norming}).
\end{example}

In the opposite direction, the classical Mazur technique shows that every norming M-basis admits a Schauder subsequence; we state this only for countable M-bases, but a small variation gives the result for uncountable ones as well, \cite[Fact 4.4]{HRST}.
\begin{lemma} Let $\{e_k;\p_k\}_{k=1}^\infty$ be a norming M-basis for $\X$. Then some subsequence of $(e_k)_{k=1}^\infty$ is a basic sequence in $\X$. 
\end{lemma}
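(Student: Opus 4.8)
The plan is to use the classical Mazur technique for extracting a basic sequence, adapted to exploit the norming property to control the basis constant. Recall that by the standard Mazur construction, a sequence $(x_n)$ of nonzero vectors is basic provided there is a constant $K$ such that for all $m<n$ and all scalars $(a_k)$,
\[ \Big\|\sum_{k=1}^m a_k x_k\Big\| \leq K \Big\|\sum_{k=1}^n a_k x_k\Big\|. \]
The goal is therefore to select a subsequence $(e_{k_j})_{j=1}^\infty$ of $(e_k)$ for which such a uniform estimate holds; equivalently, I would ensure that the partial-sum projections onto the span of the selected vectors are uniformly bounded.

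\smallskip

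First I would observe that the norming property gives a concrete source of norm-attaining (up to $\varepsilon$) functionals lying in $\mathrm{span}\{\p_k\}$, and this is precisely what makes the selection possible. The key point is that if $\{e_k;\p_k\}$ is $\lambda$-norming, then for any finite-dimensional subspace $E=\mathrm{span}\{e_{k_1},\dots,e_{k_j}\}$ one can almost-norm the vectors of $E$ using functionals from $\mathrm{span}\{\p_k\}$; since such functionals depend only on finitely many coordinates, a compactness/finite-net argument on the unit sphere of $E$ lets me find finitely many functionals $\psi_1,\dots,\psi_r \in \mathrm{span}\{\p_k\}$ that $(\lambda-\varepsilon)$-norm all of $E$ simultaneously. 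Each $\psi_i$ involves only finitely many indices, so I can then choose the next index $k_{j+1}$ large enough that $e_{k_{j+1}}$ lies in the kernel of every $\psi_i$ (indeed $\langle \psi_i, e_k\rangle=0$ for all $k$ beyond the finitely many indices appearing in $\psi_1,\dots,\psi_r$). This is the standard Mazur ``avoid finitely many functionals'' step, and it is exactly where totality/norming is used.

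\smallskip

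With this selection scheme in place, I would run the induction: having chosen $e_{k_1},\dots,e_{k_j}$ and a finite $(\lambda-\varepsilon)$-norming set of functionals for their span, I pick $k_{j+1}$ annihilated by those functionals, then refine the norming set for the enlarged span, and continue. Summing a convergent product $\prod_j (1+\varepsilon_j)$ of the per-step errors, the resulting subsequence $(e_{k_j})$ satisfies a partial-sum estimate with constant $K$ close to $1/\lambda$, hence is basic. The point that the functionals can be taken from $\mathrm{span}\{\p_k\}$ rather than from all of $\X^*$ is what guarantees they depend on finitely many coordinates, enabling the annihilation step; this is the role of the norming hypothesis.

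\smallskip

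I expect the main obstacle to be the bookkeeping in the inductive step: one must simultaneously maintain a finite norming set for the growing finite-dimensional span (which requires re-covering the sphere at each stage and re-selecting functionals) while keeping the newly chosen basis vector annihilated by all previously selected functionals, and then verify that the accumulated multiplicative errors remain controlled so that the final basis constant is finite. Making the two requirements compatible — enlarging the functional set without destroying the annihilation already arranged — is the delicate part, and it is handled by choosing the new index $k_{j+1}$ only after fixing the (finite) norming functionals for the current span, so that all relevant functionals are determined before the next vector is added.
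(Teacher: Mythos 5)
Your proposal is correct and follows essentially the same route as the paper: both run the classical Mazur selection, with the key observation that the $(\lambda-\e)$-norming functionals for a finite-dimensional span can be taken in ${\rm span}\{\p_k\}_{k=1}^n$ for some finite $n$ (by a compactness argument on the unit sphere) and hence annihilate every $e_k$ with $k>n$, which yields the Grunblum-type partial-sum estimate. The only cosmetic difference is that the paper first renorms so that the M-basis is $1$-norming, which simplifies the bookkeeping, whereas you carry $\lambda$ throughout and obtain a basis constant of order $1/\lambda$; both variants are standard and valid.
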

\begin{proof} Up to renorming $\X$, we assume that $\{e_k;\p_k\}_{k=1}^\infty$ is $1$-norming. If $\Z$ is a finite-dimensional subspace of $\X$ and $\e>0$, by definition there exists $n\in\N$ with 
\[ (1-\e)\|x\|\leq \sup\big\{|\langle\p,x\rangle|\colon \p\in {\rm span}\{\p_k\}_{k=1}^n,\, \|\p\|\leq 1 \big\} \qquad(x\in\Z). \]
Therefore, for $x\in \Z$ and $t\in \R$ we have
\[ (1-\e)\|x\|\leq \sup\big\{|\langle\p,x + te_{n+1}\rangle|\colon \p\in {\rm span}\{\p_k\}_{k=1}^n,\, \|\p\|\leq 1 \big\}\leq \|x + te_{n+1}\| \]
and a standard inductive argument (as in \cite[Theorem 4.19]{FHHMZ}) finishes the proof.
\end{proof}

Next, we discuss the existence of (norming) M-bases in separable Banach spaces. The following crucial result, essentially based on the Gram-Schmidt orthogonalisation algorithm, is due to Markushevich \cite{Markushevich}. We refer to \cite[Theorem 4.59]{FHHMZ} for a proof.
\begin{theorem}[\cite{Markushevich}]\label{thm: separable M-basis} Let $\X$ be a separable Banach space and let $\{x_k\}_{k=1}^\infty\subseteq \X$ and $\{\psi_k\}_{k=1}^\infty\subseteq \X^*$ be such that ${\rm span}\{x_k\}_{k=1}^\infty$ is dense in $\X$ and ${\rm span}\{\psi_k\}_{k=1}^\infty$ is $w^*$-dense in $\X^*$. Then there exists an M-basis $\{e_k;\p_k\}_{k=1}^\infty$ of $\X$ such that
\[ {\rm span}\{e_k\}_{k=1}^\infty= {\rm span}\{x_k\}_{k=1}^\infty \qquad\mbox{and}\qquad {\rm span}\{\p_k\}_{k=1}^\infty= {\rm span}\{\psi_k\}_{k=1}^\infty. \]
\end{theorem}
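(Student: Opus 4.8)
The plan is to construct $\{e_k;\p_k\}_{k=1}^\infty$ by a back-and-forth induction, maintaining at each stage a \emph{finite} biorthogonal system $\{e_i;\p_i\}_{i=1}^m$ subject to the two side constraints ${\rm span}\{e_i\}_{i=1}^m\subseteq {\rm span}\{x_k\}_{k=1}^\infty$ and ${\rm span}\{\p_i\}_{i=1}^m\subseteq {\rm span}\{\psi_k\}_{k=1}^\infty$. New pairs are only \emph{appended}, and each is built orthogonal to all previous ones on both sides, so old biorthogonality relations are never disturbed and the limiting system is automatically biorthogonal. The enumeration of tasks alternates between ``absorb $x_k$'' and ``absorb $\psi_k$'': after the $k$-th task on the vector side I will have arranged $x_k\in {\rm span}\{e_i\}_i$ (current), and after the $k$-th task on the functional side $\psi_k\in {\rm span}\{\p_i\}_i$. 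Together with the two inclusions above, this will force ${\rm span}\{e_k\}={\rm span}\{x_k\}$ and ${\rm span}\{\p_k\}={\rm span}\{\psi_k\}$ in the limit.

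For the inductive steps I would use the Gram--Schmidt-type correction. To absorb $x_k$, given $\{e_i;\p_i\}_{i=1}^m$ I set $e_{m+1}\coloneqq x_k-\sum_{i=1}^m\langle\p_i,x_k\rangle e_i$; if $e_{m+1}=0$ then $x_k$ already lies in the current span and nothing is added, and otherwise $e_{m+1}$ is linearly independent of $e_1,\dots,e_m$ and satisfies $\langle\p_i,e_{m+1}\rangle=0$ for $i\le m$, so I must produce a partner $\p_{m+1}\in{\rm span}\{\psi_k\}$ with $\langle\p_{m+1},e_i\rangle=\delta_{i,m+1}$ for $1\le i\le m+1$. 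The symmetric step absorbs $\psi_k$ via $\p_{m+1}\coloneqq\psi_k-\sum_{i=1}^m\langle\psi_k,e_i\rangle\p_i$ and, when nonzero, requires a partner $e_{m+1}\in{\rm span}\{x_k\}$ with $\langle\p_i,e_{m+1}\rangle=\delta_{i,m+1}$. Appending the pair preserves full biorthogonality, since $e_{m+1}$ is orthogonal to the old $\p_i$ and $\p_{m+1}$ to the old $e_i$ by construction.

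The crux is the existence of these partners \emph{inside the prescribed spans}. For the vector step I consider the linear map $T\colon{\rm span}\{\psi_k\}\to\R^{m+1}$, $\psi\mapsto(\langle\psi,e_i\rangle)_{i=1}^{m+1}$, and claim it is surjective, which lets me hit the target $(0,\dots,0,1)$. Indeed, if not, some nonzero $(a_i)_{i=1}^{m+1}$ annihilates the image, i.e.\ $\langle\psi,\sum_i a_ie_i\rangle=0$ for every $\psi\in{\rm span}\{\psi_k\}$; since this span separates points of $\X$ (totality, equivalent to $w^*$-density), $\sum_i a_ie_i=0$, contradicting the linear independence of $e_1,\dots,e_{m+1}$. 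The dual step is identical: $S\colon{\rm span}\{x_k\}\to\R^{m+1}$, $x\mapsto(\langle\p_i,x\rangle)_{i=1}^{m+1}$ is surjective because a nonzero annihilator would give $\sum_i a_i\p_i$ vanishing on the dense set ${\rm span}\{x_k\}$, hence $\sum_i a_i\p_i=0$, against independence of the $\p_i$. This is the one place where the hypotheses genuinely enter, via density and $w^*$-density respectively.

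Finally, since $\X$ is infinite-dimensional no finite system can be fundamental, so the induction produces infinitely many pairs and a genuine biorthogonal sequence $\{e_k;\p_k\}_{k=1}^\infty$. Completeness follows because every $x_k$ lies in ${\rm span}\{e_i\}_i$ and $\{x_k\}$ is dense; totality follows because every $\psi_k$ lies in ${\rm span}\{\p_i\}_i$ and $\{\psi_k\}$ is $w^*$-dense; and the two span equalities are exactly the built-in inclusions combined with the absorption of every $x_k$ and $\psi_k$. I expect the main obstacle to be purely organisational: keeping the surjectivity and linear-independence bookkeeping consistent so that the correction step never leaves the prescribed spans, with the degenerate cases $e_{m+1}=0$ and $\p_{m+1}=0$ handled by simply skipping, which is precisely what keeps the system linearly independent throughout.
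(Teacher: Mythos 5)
Your proposal is correct and is precisely the classical Gram--Schmidt back-and-forth argument that the paper attributes to Markushevich and refers to (via \cite{FHHMZ}) without reproducing; the key points — surjectivity of the evaluation maps onto $\R^{m+1}$ via totality (for the partner functional) and via density plus continuity (for the partner vector), and the skip rule in the degenerate cases — are all handled correctly. No gaps.
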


The result immediately implies the existence of a $1$-norming M-basis in every separable Banach space. Indeed, it is enough to take a dense sequence $(x_k)_{k=1}^\infty$ in $\X$, let $\psi_k$ be a norm one functional with $\langle\psi_k, x_k\rangle=1$ and observe that ${\rm span}\{\psi_k\}_{k=1}^\infty$ is $1$-norming for $\X$.

The situation is radically different for non-separable $\X$ and there are non-separable Banach spaces without an M-basis, the simplest example of this being $\ell_\infty$, \cite{Johnson}. It is even consistent with ZFC that there are non-separable Banach spaces without uncountable biorthogonal systems, \cite[Section 4.4]{HMVZ}.

Part of the interest of Theorem \ref{thm: separable M-basis} is in the clause concerning the preservation of the linear spans, that allows for instance to obtain examples of non-norming M-bases.

\begin{remark} When searching for M-bases that fail to be norming one immediately faces the problem of finding $w^*$-dense subspaces that are not norming. The existence of such subspaces has been clarified in \cite{DL total not norming}: A Banach space $\X$ has a $w^*$-dense not norming subspace in $\X^*$ if and only if $\X^{**}/ \X$ is infinite-dimensional (\emph{i.e.}, $\X$ is not quasi-reflexive). Therefore, by Theorem \ref{thm: separable M-basis} every separable Banach space that is not quasi-reflexive admits a non-norming M-basis. Conversely, in a separable quasi-reflexive Banach space all M-bases are norming (and in a separable reflexive space all M-bases are shrinking).
\end{remark}

A similar argument shows the existence of norming M-bases that are not strong. An M-basis $\{e_\alpha;\p_\alpha\}_{\alpha\in \Gamma}$ is \emph{strong} if for every $x\in \X$ one has
\[ x\in \overline{\rm span}\{e_\alpha\colon \langle\p_\alpha, x\rangle \neq0\}. \]
For instance, the trigonometric system of Example \ref{ex: trig system} is strong. Despite how innocent this condition might look there are M-bases that fail to be strong and it is actually rather easy to find one in every separable Banach space, \cite[Proposition 1.34]{HMVZ}. Inspection of this proof shows that, if $\{e_k;\p_k\}_{k=1}^\infty$ is an M-basis for $\X$, then there is another M-basis for $\X$ with the same linear spans and that fails to be strong. Starting from a norming M-basis shows that every separable Banach space admits a norming M-basis that is not strong. We will see an example of the opposite situation in Remark \ref{rmk: strong not norming}.

\section{Classes of non-separable Banach spaces and M-bases}\label{sec: non-sep classes}
In this section we briefly introduce several classes of non-separable Banach spaces that can be characterised by the existence of M-bases with additional properties. We also describe characterisations in terms of the dual ball and give examples of spaces belonging to each class. This section has an auxiliary role for what follows, therefore we only state results without proof. \smallskip

The most important class for us is that of weakly compactly generated Banach spaces. A Banach space $\X$ is \emph{weakly compactly generated} (WCG, for short) if there exists a weakly compact subset $\K$ of $\X$ such that ${\rm span}(\K)$ is dense in $\X$, \cite{AmirLind}. Clearly, separable Banach spaces and reflexive ones are examples of WCG spaces. It is also easy to check that $c_0(\Gamma)$ for every index set $\Gamma$ and $L_1(\mu)$ for a finite measure $\mu$ are WCG. By definition, it is clear that WCG spaces are closed under taking quotients and, more generally, if $\X$ is WCG and $T\colon \X\to \Z$ is a linear operator with dense range, then $\Z$ is also WCG. However, solving an important open problem, Rosenthal gave an example of a non WCG subspace of a WCG Banach space, \cite{Rosenthal}.

WCG Banach spaces admit a characterisation in terms of M-bases as follows: a Banach space $\X$ is WCG if and only if it admits a weakly compact M-basis, \emph{i.e.}, an M-basis $\{e_\alpha; \p_\alpha\}_{\alpha\in \Gamma}$ such that $\{e_\alpha\}_{\alpha\in \Gamma}\cup \{0\}$ is weakly compact in $\X$ (see \cite[Theorem 6.9]{HMVZ}). It is easy to reformulate this property in terms of the evaluation map $\ev$: the M-basis is weakly compact if and only if
\[ \ev[B_{\X^*}]\subseteq c_0(\Gamma)\cap[-1,1]^\Gamma. \]
In particular, the dual $\X^*$ of a WCG Banach space admits a $w^*$-to-$w$ continuous injection in $c_0(\Gamma)$, \cite[Proposition 2]{AmirLind}. This directly gives that $(B_{\X^*},w^*)$ is homeomorphic to a weakly compact subset of $c_0(\Gamma)$.

A compact topological space $\K$ is \emph{Eberlein} if it is homeomorphic to a weakly compact subset of a Banach space. By the main result of \cite{AmirLind}, this is equivalent to requiring that it is a weakly compact subset of $c_0(\Gamma)$. In particular, we obtain from the previous paragraph that $(B_{\X^*},w^*)$ is Eberlein when $\X$ is WCG. However, Eberlein compacta are stable under continuous images \cite{BRW}, hence $(B_{\X^*},w^*)$ is also Eberlein when $\X$ is a subspace of a WCG Banach space (which is a more general condition, by Rosenthal's example \cite{Rosenthal}).

When $\X$ is a $\C(\K)$ space, the situation is more symmetric: in fact, $\C(\K)$ is WCG if and only if $\K$ is Eberlein, if and only if $(B_{\C(\K)^*},w^*)$ is Eberlein, \cite{AmirLind}. This and the fact that $\X$ is a subspace of $\C(B_{\X^*})$ imply that, when $B_{\X^*}$ is Eberlein, $\X$ is a subspace of the WCG space $\C(B_{\X^*})$. Therefore the dual ball of $\X$ is Eberlein if and only if $\X$ is a subspace of a WCG space. \smallskip

At one point, we will need a subclass of the class of WCG spaces, namely Hilbert generated Banach spaces and the corresponding class of uniform Eberlein compacta. A Banach space $\X$ is \emph{Hilbert generated} if there are a Hilbert space $\mathcal{H}$ and a linear map $T\colon \mathcal{H}\to \X$ with dense range. By definition, separable Banach spaces, $c_0(\Gamma)$ and $L_1(\mu)$ for a finite measure $\mu$ are Hilbert generated. However, there are reflexive Banach spaces that are not Hilbert generated \cite{KuTr} and even super-reflexive ones, \cite[Section 4]{FGHZ}. Since Hilbert spaces are WCG, every Hilbert generated Banach space is WCG. Dually, a compact topological space $\K$ is \emph{uniform Eberlein} if it is homeomorphic to a weakly compact subset of a Hilbert space. Analogous results as those of the previous paragraphs give that $\X$ is Hilbert generated if and only if
\[ \ev[B_{\X^*}]\subseteq \ell_2(\Gamma)\cap[-1,1]^\Gamma \]
and that $\X$ is a subspace of a Hilbert generated space if and only if $(B_{\X^*},w^*)$ is uniform Eberlein. Moreover, a $\C(\K)$ space is Hilbert generated if and only if $\K$ is uniform Eberlein, if and only if $(B_{\C(\K)^*},w^*)$ is uniform Eberlein, \cite[Theorem 3.2]{BRW}. For further details we refer to \cite[Section 6.3]{HMVZ} and \cite{BRW, FGHZ, FGZ}. \smallskip

The next class we wish to introduce is the class of weakly Lindel\"of determined Banach spaces, which extends the WCG class. A Banach space is \emph{weakly Lindel\"of determined} (WLD, for short) if it admits an M-basis $\{e_\alpha; \p_\alpha\}_{\alpha\in \Gamma}$ that \emph{countably supports} the dual in the sense that for every $\p\in \X^*$ 
\[ {\rm supp}(\p)\coloneqq \{\alpha\in \Gamma\colon \langle\p, e_\alpha\rangle \neq0\}\;\; \mbox{is at most countable.} \]
In this case, it turns out that every M-basis countably supports $\X^*$, \cite[Theorem 5.37]{HMVZ}. Comparing this notion to the M-basis characterisation of WCG spaces one easily sees that every WCG Banach space is WLD. Moreover, WLD spaces are closed under quotients and subspaces, as we will mention below (thus subspaces of WCG are also WLD). It is not easy to offer natural examples of WLD spaces that are not WCG; one could mention for instance Rosenthal's non WCG subspace of a WCG space \cite{Rosenthal}, or Talagrand's example of a WLD space that is not subspace of a WCG space, \cite{Talagrand WCD}.

By definition, the dual ball of a WLD space embeds in the \emph{$\Sigma$-product}
\[ \Sigma(\Gamma)=\{x\in [-1,1]^{\Gamma}\colon |\{\gamma\in \Gamma\colon x(\gamma)\neq 0\}|\leq \omega\}. \]
Therefore, $(B_{\X^*}, w^*)$ satisfies the following definition. A compact topological space $\K$ is \emph{Corson} if it is homeomorphic to a subset of $\Sigma(\Gamma)$, for some set $\Gamma$. Since elements in $c_0(\Gamma)$ are countably supported, every Eberlein compact space is Corson. Moreover, Corson compacta are closed under subspaces and continuous images, \cite{Gruenhage, Gulko, MR Eberlein+Corson}. As it turns out, a Banach space is WLD if and only if its dual ball is Corson, \cite[Section 1]{ArMe WLD}, \cite[Theorem 1.1]{VWZ} and references therein. Therefore, we get in particular that WLD spaces are closed under quotients and subspaces. Finally, a $\C(\K)$ space is WLD if and only if $\K$ is Corson and it has property (M), namely every measure on $\K$ has separable support, \cite{AMN}. \smallskip

At one point (in Theorem \ref{thm: shrinking M-basis}) we will need one more class, intermediate between WCG and WLD Banach spaces. In order not to overload the section with definitions we prefer not to enter its definition and only give some references. The class consists of the so-called \emph{weakly countably determined} (WCD, for short), or \emph{Va\v{s}\'ak}, Banach spaces; as we said, we have the implications WCG $\implies$ WCD $\implies$ WLD. WCD spaces were introduced in \cite{Vasak}, see \emph{e.g.}, \cite[Section VI.2]{DGZ}, or \cite[p. 174 and p. 433]{GMZ renorm}. \smallskip

The last generalisation that we will need is that of Plichko spaces and Valdivia compacta. An M-basis $\{e_\alpha; \p_\alpha\}_{\alpha\in \Gamma}$ is \emph{countably $\lambda$-norming} if the set of countably supported functionals
\[ \{\p\in \X^*\colon {\rm supp}(\p) \mbox{ is at most countable}\} \]
is a $\lambda$-norming subspace for $\X$. A Banach space is \emph{$\lambda$-Plichko} if it admits a countably $\lambda$-norming M-basis. By definition, every WLD Banach space is $1$-Plichko. Moreover, it is easy to find examples of Plichko Banach spaces that are not WLD, for example $\ell_1(\Gamma)$ for $\Gamma$ uncountable. One more important example is $\C([0,\omega_1])$, as we will see in Remark \ref{rmk: C(omega1) Plichko}; we refer, \emph{e.g.}, to \cite{CCS projection, DHR fundamental, Kalenda survey} for more examples of Plichko spaces. The example of $\ell_1(\Gamma)$ immediately clarifies that Plichko spaces are not closed under quotients; Kubi\'s \cite{Kubis subspace} proved that they also fail to be closed under subspaces, see Section \ref{sec: subspace}.

We now describe the topological counterpart to Plichko spaces. A compact space $\K$ is \emph{Valdivia} if there is a homeomorphic embedding $h\colon \K \to[-1,1]^\Gamma$ such that $h^{-1}(\Sigma(\Gamma))$ is dense in $\K$; in this case, the set $\Sigma(\K)\coloneqq h^{-1}(\Sigma(\Gamma))$ is called a \emph{$\Sigma$-subset} of $\K$. Clearly, every Corson compact space is Valdivia; moreover the ordinal interval $[0,\omega_1]$ is Valdivia, as witnessed by the embedding $\alpha\mapsto \bone_{[0,\alpha)}$. It is easy to see that $[0,\omega_1]$ is actually not Corson, because the point $\omega_1$ is not the limit of a sequence in $[0,\omega_1)$.

It is clear that a functional $\p\in \X^*$ is countably supported if and only if $\ev(\p)\in \Sigma(\Gamma)$. Hence if a Banach space $\X$ is $1$-Plichko, the set of countably supported functionals in $B_{\X^*}$ is $w^*$-dense in the dual ball by \eqref{eq: norming in dual ball} and the map $\ev$ witnesses that $(B_{\X^*}, w^*)$ is Valdivia. However, it is not true that the dual ball of $\X$ is Valdivia when $\X$ is merely Plichko, \cite{Kalenda renorm Valdivia}, see Remark \ref{rmk: C(omega1) Plichko}. Further, it is also not true that $\X$ must be $1$-Plichko if $(B_{\X^*}, w^*)$ is Valdivia, \cite{Kalenda Valdivia non 1Plichko}. It seems to be still open if $\X$ must be Plichko under this assumption, \cite{Kalenda Valdivia non 1Plichko}. Finally, when $\X$ is a $\C(\K)$ space, $\C(\K)$ is $1$-Plichko for $\K$ Valdivia, \cite{Orihuela} (see \cite[Proposition 5.1]{Kalenda survey}), while the converse fails, \cite{BaKu}, \cite[Section 4]{Kalenda natural}. Moreover, if $\K$ has a dense set of $G_\delta$ points, $\K$ is Valdivia if and only if $\C(\K)$ is $1$-Plichko, if and only if $(B_{\C(\K)^*},w^*)$ is Valdivia, \cite[Theorem 5.3]{Kalenda survey}. \smallskip

Before we conclude this discussion, let us mention one point concerning $\Sigma$-products. Frequently, for instance in \cite{Kalenda survey}, the $\Sigma$-product $\Sigma(\Gamma)$ is defined as
\[ \{x\in \R^{\Gamma}\colon|\{\gamma\in \Gamma\colon x(\gamma)\neq 0\}|\leq \omega\}. \]
However, every compact subset of the latter is coordinatewise bounded, hence it is contained in some product $\prod_{\alpha\in \Gamma}[-a_\alpha,a_\alpha]$, for some $a_\alpha>0$. Obviously, there is a homeomorphism between $\prod_{\alpha\in \Gamma}[-a_\alpha,a_\alpha]$ and $[-1,1]^\Gamma$ that acts by coordinates and fixes $0$ in every coordinate. Therefore using the $\Sigma$-product of lines or of the interval $[-1,1]$ does not affect the definitions. By a similar reason, one can also replace $[-1,1]^\Gamma$ with $[0,1]^\Gamma$, by using the obvious homeomorphism between $[-1,1]$ and $\big([0,1]\times\{0\}\big) \cup \big(\{0\}\times [0,1]\big)$. Defining Corson and Valdivia compacta via the $\Sigma$-product $[0,1]^\Gamma$ is also rather common. \smallskip

To summarise, we have the following implications, none of which can be reversed:
\[ \mbox{Hilbert generated} \implies \mbox{WCG} \implies \mbox{Subspace of WCG} \implies \mbox{WLD} \implies \mbox{Plichko}. \]

In the last part of this section, we briefly mention some properties of Asplund spaces. The class of Asplund spaces is transversal to the previous ones and it does not constitute a generalisation of separable Banach spaces; yet, it seems most appropriate to recall it here.  The original definition is that a Banach space $\X$ is \emph{Asplund} if every convex continuous function defined on a convex open subset $\mathcal{U}$ of $\X$ is Fr\'echet differentiable on a dense $G_\delta$ subset of $\mathcal{U}$. However, it is nowadays customary to define a Banach space $\X$ to be Asplund if every separable subspace of $\X$ has a separable dual. This equivalent definition has the advantage to be the shortest to formulate and to directly give several properties and examples. For instance, it readily implies that Asplund spaces are closed under taking closed subspaces and quotients. Moreover, it shows that every reflexive Banach space, as well as $c_0(\Gamma)$ for all set $\Gamma$, is an example of an Asplund space; on the other hand, $\ell_1(\Gamma)$ is an easy example of Banach space that is not Asplund. The equivalence between the above definitions can be found for example in \cite[Theorem I.5.7]{DGZ}.

When considering $\C(\K)$ spaces, a $\C(\K)$ space is Asplund if and only if the compact $\K$ is scattered, see, \emph{e.g.}, \cite[Lemma VI.8.3]{DGZ}. Recall that a compact space $\K$ is \emph{scattered} if every closed subset of $\K$ admits an isolated point. In particular, a very important example for us is that $\C([0,\omega_1])$ is an Asplund space. For further information and historical background on Asplund spaces we refer to \cite[\S~1.5]{DGZ}, \cite{Fabian book}, \cite{GMZ renorm}, \cite{Phelps book}, and the references therein.

\section{Norming M-bases and WCG spaces}\label{sec: WCG}
This section is dedicated to the connection between norming M-bases and WCG Banach spaces. We begin by reviewing some classical results that hinted at a close connection between these concepts and state two classical problems concerning these notions. In the second part of the section we explain some ingredients of the recent solutions to these two problems, obtained in \cite{Hajek} and \cite{HRST} respectively. \smallskip

In order to explain the connection between WCG Banach spaces and norming M-bases, we need two notions, that of \emph{projectional resolution of the identity} (PRI for short) and that of \emph{locally uniformly rotund} (LUR, for short) norm. A PRI in a Banach space $\X$ is a well-ordered collection of norm-one projections on $\X$ onto smaller subspaces, that are moreover compatible and that satisfy a certain continuity condition. For the precise definition we refer the reader to \cite[Section VI.1]{DGZ}, or \cite[Section 6.1.4]{GMZ renorm}. For us it will be more useful to keep in mind the idea that a PRI is way to decompose a Banach space into smaller well-behaved pieces, from which information on the space can be reconstructed, arguing by transfinite induction on the density character of the space. PRI's were first constructed by Lindenstrauss \cite{Lind refl} in every reflexive Banach space and shortly after by Amir and Lindenstrauss in every WCG Banach space \cite{AmirLind}. Since then, their use in non-separable Banach spaces has been pervasive and their usefulness cannot be overstated. Shortly after their introduction, Troyanski obtained his celebrated renorming theorem \cite{Troyanski LUR} that every WCG Banach space admits an LUR norm (see also Zizler's extension \cite{Zizler LUR}). Let us just say here that, besides being among the most important notions in renorming, LUR norms are also extremely useful outside the isomorphic theory; for instance they are used in Kadets' proof that all separable Banach spaces are homeomorphic, see \emph{e.g.}, \cite[Section 9.5.3]{GMZ renorm}.

Subsequently, John and Zizler \cite{JZ norming WCG} constructed a PRI in every Banach space with $1$-norming M-basis and deduced that Banach spaces with norming M-basis admit an LUR renorming. It was then natural to conjecture a connection between WCG spaces and norming M-bases; the most optimistic hope that a Banach space is WCG if and only if it admits a norming M-basis is however too naive due to the example of $\ell_1(\Gamma)$. The validity of the converse implication was asked in the same paper \cite{JZ norming WCG} and eventually became one of the classical problems in Banach space theory that was reiterated multiple times in several articles and books, \cite[Problem 111]{GMZ open}, \cite[p. 173]{HMVZ}, \cite{Kalenda omega2}, \cite[p. 108]{Rosenthal}, or \cite[Problem 10]{Zizler}.

\begin{problem}[John--Zizler]\label{probl: John-Zizler} Does every WCG Banach space admit a norming M-basis?
\end{problem}

\begin{remark}[\cite{JZ Some notes WCG}]\label{rmk: norming equiv for C(K)} Notice that in Problem \ref{probl: John-Zizler} it is sufficient to consider $\C(\K)$ spaces, in the sense that a positive answer for all WCG $\C(\K)$ spaces would imply a positive answer for all WCG spaces. Indeed, if $\X$ is any WCG Banach space, then $(B_{\X^*}, w^*)$ is Eberlein, hence $\C(B_{\X^*})$ is WCG as well, as we saw in Section \ref{sec: non-sep classes}. Moreover, $\X$ is a subspace of $\C(B_{\X^*})$. As we will see in Theorem \ref{thm: norming in WLD subspace} below, a WCG subspace of a Banach space with $\lambda$-norming M-basis admits such an M-basis as well. Thus, the existence of a norming M-basis in $\C(B_{\X^*})$ would imply the existence of a norming M-basis in $\X$.
\end{remark}

While the problem remained open until recently (see Theorem \ref{thm: H WCG not norming} below), with the advancing of the theory the connection mentioned above was clarified by the theory of WLD and Plichko spaces. Indeed both WCG spaces and spaces with norming M-basis are Plichko and Plichko spaces admit a PRI and an LUR norm, see \cite{HMVZ}.

Moreover, several partial results were given to show that in general a WCG Banach space need not have a $\lambda$-norming M-basis, for $\lambda$ close to $1$, see \cite[pp. 173--175]{HMVZ}. For example, Valdivia \cite{Valdivia 1 norming} proved that if $\X^*$ is a non-separable dual WCD space, there is an equivalent norm on $\X^* \oplus \ell_1$ that admits no $\lambda$-norming M-basis for $\lambda$ close to $1$. He also constructed a scattered compact $\K$ and a norm $\nn\cdot$ on $\C(\K)$ such that $(\C(\K),\nn\cdot)$ has the same property. It is proved in \cite[Theorem 4.4]{VWZ} that if $JL(2)$ is the Johnson--Lindenstrauss space \cite{JL} and $\n$ is an LUR norm on $JL(2)$, then $(JL(2)^*,\n^*)$ is a WCG space with no $1$-norming M-basis. This gives the existence of an Eberlein compact $\K$ such that $\C(\K)$ does not admit a $1$-norming M-basis, \cite[Corollary 4.7]{VWZ}. Moreover, Godefroy \cite[Theorem 2.1]{Godefroy Rocky} constructed a Banach space $\X$ obtained as a direct sum of a reflexive space (actually, $\ell_2(\mathfrak{c})$) and a separable space, that admits no $\lambda$-norming M-basis, for $\lambda>1/2$. Passing to the dual ball of $\X$, we obtain a (uniform) Eberlein compact space $\K$ such that $\C(\K)$ does not admit $\lambda$-norming M-basis, for $\lambda>1/2$. However, Godefroy's approach cannot work for $\lambda<1/4$, \cite[Proposition 2.4]{Godefroy Rocky}. \smallskip

As we said, the validity of the converse implication in Problem \ref{probl: John-Zizler} is easily disproved. However, several results since the Seventies strongly hinted towards the fact that the converse might be true under the additional assumption that the space is Asplund (see Problem \ref{probl: Godefroy} below). As a first instance let us mention the result from \cite[Proposition~3]{JZ norming WCG} that a Banach space $(\X,\n)$ with a $1$-norming M-basis and such that $\n$ is Fr\'echet differentiable is WCG. Since Fr\'echet differentiability of $\n$ implies that $\X$ is Asplund, this is a particular case of the above situation. Notice that there are two isometric requirements on the same norm, so the assumptions are quite restrictive and the proof is actually easy. Indeed, it is standard to check that if $\n$ is Fr\'echet differentiable, the unique $1$-norming subspace is $\X^*$ \cite[Section 3.5]{GMZ renorm}; hence the $1$-norming M-basis is automatically shrinking, thus $\X$ is WCG. A result in a similar spirit can be found in \cite{Fabian Gateaux PRI}.

As we will see, essentially the role of the Asplund property is to provide information on the dual space as well. To explain this, let us start with a result characterising spaces with a shrinking M-basis (the precise attribution to the result will be given below).

\begin{theorem} \label{thm: shrinking M-basis} For a Banach space $\X$, the following are equivalent:
\begin{romanenumerate}
    \item \label{item: shrink} $\X$ admits a shrinking M-basis;
    \item \label{item: WCG} $\X$ is WCG and Asplund;
    \item \label{item: WCD} $\X$ is WCD and Asplund;
    \item \label{item: WLD} $\X$ is WLD and Asplund;
    \item \label{item: dual LUR} $\X$ is WLD and $\X^*$ has a dual LUR norm;
    \item \label{item: Frechet} $\X$ is WLD and it admits a Fr\'echet smooth norm.
\end{romanenumerate}
\end{theorem}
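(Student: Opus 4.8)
The plan is to organise the six conditions into the three ``structural'' ones (ii)--(iv), the M-basis condition (i), and the two renorming ones (v)--(vi), and to run the cycle (i) $\Rightarrow$ (iv) $\Rightarrow$ (ii) $\Rightarrow$ (i) before attaching (v) and (vi) to it. The only genuinely hard step will be the construction of a shrinking M-basis out of the WCG Asplund hypothesis.

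I would first dispose of the equivalences among (ii), (iii), (iv). The implications (ii) $\Rightarrow$ (iii) $\Rightarrow$ (iv) are immediate from the chain WCG $\Rightarrow$ WCD $\Rightarrow$ WLD recalled in Section \ref{sec: non-sep classes}, so the content lies entirely in (iv) $\Rightarrow$ (ii), namely that a \emph{WLD Asplund space is WCG}; I would invoke this as a known structural fact, whose mechanism is a transfinite induction along a PRI (available in WLD spaces), with base case the triviality that a separable Asplund space has separable dual. The implication (i) $\Rightarrow$ (iv) is then a direct verification. If $\{e_\alpha;\p_\alpha\}_{\alpha\in\Gamma}$ is shrinking and $\p\in\X^*$, write $\p=\lim_n\psi_n$ in norm with $\psi_n\in{\rm span}\{\p_\alpha\}$; since $\langle\p,e_\alpha\rangle=\lim_n\langle\psi_n,e_\alpha\rangle$ and each $\psi_n$ has finite support, ${\rm supp}(\p)\subseteq\bigcup_n{\rm supp}(\psi_n)$ is countable, giving WLD. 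For the Asplund property, let $Y\subseteq\X$ be separable and pick a countable $A\subseteq\Gamma$ absorbing the indices needed to approximate a dense sequence of $Y$, so that $Y\subseteq\X_A:=\overline{{\rm span}}\{e_\alpha:\alpha\in A\}$; since $\p_\beta|_{\X_A}=0$ for $\beta\notin A$, the shrinking property localises and one checks $\X_A^*=\overline{{\rm span}}\{\p_\alpha|_{\X_A}:\alpha\in A\}$ is separable, whence $Y^*$ is separable.

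The reverse implication (ii) $\Rightarrow$ (i) is the heart of the matter and the step I expect to be hardest: one must \emph{construct} a shrinking M-basis inside a WCG Asplund space. Here I would again run a transfinite induction along a PRI $(P_\lambda)$ of $\X$, noting that the consecutive increments $(P_{\lambda+1}-P_\lambda)\X$ are separable and Asplund, hence have separable dual and thus a shrinking M-basis by the separable theory, and then assemble these local systems into a global biorthogonal system. The delicate point is to verify that the resulting functionals have \emph{norm}-dense span in $\X^*$ (rather than merely $w^*$-dense): this is where the continuity property of the PRI and, crucially, the Asplund/separable-dual information on the pieces must be combined, and it is the main technical obstacle of the whole theorem.

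It remains to attach the renorming conditions, which I would do via (i) $\Rightarrow$ (v) $\Rightarrow$ (vi) $\Rightarrow$ (iv). For (i) $\Rightarrow$ (v), the adjoint system $\{\p_\alpha;\hat e_\alpha\}$ (with $\hat e_\alpha$ the canonical images of the $e_\alpha$ in $\X^{**}$) is an M-basis of $\X^*$ whose biorthogonal functionals lie in the predual $\X$, and a dual form of Troyanski's renorming theorem produces from it a \emph{dual} LUR norm on $\X^*$; WLD is inherited since (i) $\Rightarrow$ (iv). For (v) $\Rightarrow$ (vi) I would use \v{S}mulyan's lemma, by which a dual LUR norm on $\X^*$ forces its predual norm on $\X$ to be Fr\'echet smooth; and (vi) $\Rightarrow$ (iv) follows because a Fr\'echet smooth norm forces $\X$ to be Asplund, the WLD hypothesis being carried along unchanged throughout. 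This closes the loop and yields all six equivalences.
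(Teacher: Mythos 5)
Your proposal is correct and follows essentially the same route as the paper: the elementary verifications (that a shrinking M-basis forces countable supports, hence WLD, and separable duals of separable subspaces, hence Asplund, together with the standard renorming implications (v)$\implies$(vi)$\implies$(iv)) coincide with the paper's, and the two hard steps you isolate --- WLD Asplund $\implies$ WCG and WCG Asplund $\implies$ shrinking M-basis --- are precisely the theorems of Valdivia and Fabian to which the paper delegates the proof. The one point worth sharpening in your sketch of (ii)$\implies$(i) is that an arbitrary PRI with separable Asplund increments does not suffice: the PRI must be constructed to be \emph{shrinking} (its adjoints forming a PRI on $\X^*$), which is exactly where the dual-side machinery (Jayne--Rogers selectors, Simons' inequality, as in Fabian--Godefroy) enters, after which Va\v{s}\'ak's assembly of a shrinking M-basis from a shrinking PRI completes the argument.
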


Before we explain the significance of this theorem, let us quote in passing two related results. First, every Asplund generated WLD Banach space of density at most $\omega_1$ is WCG, \cite{FGZ Asplund gen}; hence the WCG and WLD classes coincide also for Asplund generated spaces (but it is open if the same result holds for higher densities). Moreover, a sufficient condition for the validity of the conditions in Theorem \ref{thm: shrinking M-basis} is that both $\X$ and $\X^*$ are subspaces of a WCG space, \cite{JZ heredity}. Previous weaker results assuming both spaces to be WCG were obtained in \cite{JZ X X*, JZ WCG duality, JL}. \smallskip

We now discuss the contributions that eventually led to the characterisation in Theorem \ref{thm: shrinking M-basis}. To begin with, the implications \eqref{item: dual LUR}$\implies$\eqref{item: Frechet}$\implies$\eqref{item: WLD} are standard \cite{DGZ} (and do not require the WLD assumption), while \eqref{item: WCG}$\implies$\eqref{item: WCD}$\implies$\eqref{item: WLD} is obvious. Next, \eqref{item: shrink}$\implies$\eqref{item: WCG} is elementary and we shall explain it here (see also the remarks after \cite[Definition 6.2.3]{Fabian book}). If $\{e_\alpha; \p_\alpha\}_{\alpha\in \Gamma}$ is a shrinking M-basis for $\X$ and $\|e_\alpha\|=1$, then $\{e_\alpha\}_{\alpha\in \Gamma}\cup\{0\}$ is weakly compact: in fact, to check that every subsequence of $\{e_\alpha\}_{\alpha\in \Gamma}$ converges weakly to $0$ it is enough to test on the linearly dense set $\{\p_\alpha\}_{\alpha\in \Gamma}$, for which functionals the convergence is obvious. Hence, $\X$ is WCG. Next, if $\Z$ is a separable subspace of $\X$, take a countable set $\Gamma_0\subseteq \Gamma$ with $\Z\subseteq \overline{\rm span}\{e_\alpha\}_{\alpha\in \Gamma_0}$; then $\Z^*\subseteq \overline{\rm span}\{\p_\alpha\cut_\Z\}_{\alpha\in \Gamma_0}$. In fact, take a functional $\p\in \Z^*$, extend it to a functional on $\X$, and approximate the extension by a linear combination $\sum_{\alpha\in \Gamma} c_\alpha \p_\alpha$; then $\sum_{\alpha\in \Gamma} c_\alpha \p_\alpha\cut_\Z\in {\rm span}\{\p_\alpha\cut_\Z\}_{\alpha\in \Gamma_0}$ approximates $\p$ (note that $\p_\alpha\cut_\Z=0$ for $\alpha\notin \Gamma_0$).

Now, to the essential contributions. Fabian \cite{Fabian dual LUR} proved that a WCD Asplund space is actually WCG and its dual admits a dual LUR norm. Inspection of the proof shows that Fabian built a shrinking M-basis and derived the WCG property out of it. Hence, \eqref{item: shrink}, \eqref{item: WCG}, and \eqref{item: WCD} are equivalent; moreover, they are also equivalent to \eqref{item: WLD}, \eqref{item: dual LUR}, and \eqref{item: Frechet} if one replaces WLD with WCD in them. The last result needed to close the circle of implications is that WLD Asplund spaces admit a shrinking M-basis, that was proved by Valdivia \cite[Proposition 2]{Valdivia}. Let us also point out the earlier result due to John and Zizler \cite[Theorem~1]{JZ Smooth WCG} that $\X$ has a shrinking M-basis if and only if $\X$ is WCG and admits a Fr\'echet smooth norm, if and only if $\X$ is WCG and $\X^*$ admits a dual LUR norm. We refer to \cite[Theorem 6.3]{HMVZ}, or \cite[Theorem 8.3.3]{Fabian book} for a proof of Theorem \ref{thm: shrinking M-basis}. \smallskip

Next, we come to the role of Theorem \ref{thm: shrinking M-basis} for norming M-bases. Fabian's argument amounts to working in $\X$ and in $\X^*$ simultaneously to build a PRI in $\X$ such that the adjoints of the projections form a PRI in $\X^*$ (let us call a PRI with this property a \emph{shrinking PRI}). Then this combines with a result of Va\v{s}\'ak \cite[Theorem 2]{Vasak} who built a shrinking M-basis from a shrinking PRI. Similarly, Valdivia built a shrinking PRI in WLD Asplund spaces, \cite[Lemma 2]{Valdivia}. Moreover, at the same time of \cite{Fabian dual LUR}, Fabian and Godefroy \cite{FG dual Asplund} built a PRI in the dual of every Asplund space. Therefore, the Asplund assumption gives a PRI in the dual, while the WCD one gives a PRI in the space. Fabian's method consisted in merging the two techniques to obtain compatible projections in the space and in the dual simultaneously (see the explanation in \cite[Chapter 8]{Fabian book} or \cite[Sections VI.2--VI.4]{DGZ}). However, John and Zizler \cite{JZ norming WCG} built a PRI in presence of a $1$-norming M-basis in $\X$. Therefore, it was natural to conjecture that one could also glue the techniques from John--Zizler \cite{JZ norming WCG} with these from Fabian--Godefroy \cite{FG dual Asplund} to build a shrinking PRI in Asplund spaces with a norming M-basis. That it were the case was indeed conjectured by Godefroy at the times when \cite{DGZ} was in preparation and the problem was subsequently recorded in various articles and books, see, \emph{e.g.}, \cite{AP}, \cite[p.~211]{HMVZ}, \cite[Problem~112]{GMZ open}.

\begin{problem}[Godefroy]\label{probl: Godefroy} Let $\X$ be an Asplund Banach space with a norming M-basis. Must $\X$ be WCG? 
\end{problem}

Before we move to explaining the solutions to Problem \ref{probl: John-Zizler} and Problem \ref{probl: Godefroy}, let us mention that Fabian \cite{Fabian dual LUR} and Fabian--Godefroy \cite{FG dual Asplund} both exploited Jayne--Rogers selectors \cite{JR selector} and Simons inequality \cite{Godefroy bdry, Simons}. For a modern approach to these results, let us refer to \cite{CF dual Asplund}, where a projectional skeleton is built in the dual of every Asplund space, and to \cite{CF skeleton} where a shrinking projectional generator is constructed in Asplund WCG spaces. \smallskip

Let us now move to the solution to Problem \ref{probl: John-Zizler}, that has been recently solved in the negative by the first named author \cite{Hajek}, in the form of the following result.
\begin{theorem}[\cite{Hajek}]\label{thm: H WCG not norming} There exists a uniform Eberlein compactum $\K$ such that the Banach space $\C(\K)$ does not admit any norming M-basis.
\end{theorem}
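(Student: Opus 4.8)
The plan is to build $\K$ by hand as a weakly compact subset of a Hilbert space, so that $\K$ is uniform Eberlein and hence $\C(\K)$ is WCG (indeed Hilbert generated) by the characterisations recalled in Section~\ref{sec: non-sep classes}, while simultaneously arranging its geometry so that \emph{no} total fundamental biorthogonal system can induce a norming subspace of measures. The natural starting point is the partial progress quoted above: Godefroy's reflexive-plus-separable example already rules out $\lambda$-norming M-bases for $\lambda>1/2$, but his renorming method provably cannot be pushed below $\lambda=1/4$. Thus a single example, or a finite direct sum of such examples, can never destroy norming M-bases for \emph{every} $\lambda>0$; the first strategic decision is therefore to replace the finite direct sum by an \emph{infinite, entangled} amalgamation of (uniform Eberlein) reflexive blocks $(Z_n)_{n=1}^\infty$, glued by a norm $\nn{\cdot}$ that is \emph{not} an $\ell_2$- or $c_0$-sum norm but is designed so that capturing the sup of a vector living on block $n$ forces a definite, fixed proportion of ``mass'' to leak into the later blocks.

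Granting such a $\K$, the argument is by contradiction and is best phrased through measures. Suppose $\{f_\alpha;\mu_\alpha\}_{\alpha\in\Gamma}$ were a $\lambda$-norming M-basis of $\C(\K)$ for some $\lambda>0$, and set $\Z=\overline{{\rm span}}^{\,w^*}\{\mu_\alpha\}\subseteq M(\K)=\C(\K)^*$, a $\lambda$-norming subspace. Being norming is equivalent to $\inf\{\|g\|_\Z\colon \|g\|_\infty=1\}\geq\lambda>0$, where $\|g\|_\Z=\sup\{|\mu(g)|\colon \mu\in\Z,\ \|\mu\|\leq1\}$; so it suffices to manufacture, \emph{for the given system}, a sequence of peaked functions $g_n\in\C(\K)$ with $\|g_n\|_\infty=1$ and $\|g_n\|_\Z\to0$. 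The entanglement of the norm is what makes the telescoping estimate run: one shows that any normalised $\mu\in\Z$ that sees $g_n$ on block $n$ with weight $t$ is forced, by the fixed-proportion leak, to spend a comparable amount of its unit mass on the tail, so that testing $\mu$ against a suitable $g_{n}$ localised on block $n$ yields a bound $\|g_n\|_\Z\leq c^{\,n}+\theta_n$ with a single contraction factor $c<1$ and $\theta_n\to0$. Because the loss is \emph{multiplicative} and there are \emph{infinitely} many blocks, one fixed factor $c<1$ drives the norming constant below any prescribed $\lambda$ --- precisely the effect a finite construction cannot reproduce, which is how the $1/4$ barrier is circumvented.

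The main obstacle, where essentially all the work concentrates, is that the telescoping estimate must be valid for an \emph{arbitrary} total and fundamental biorthogonal system: one has no a priori information on how a general $\mu_\alpha$ (and hence a general $\mu\in\Z$) distributes its mass across the blocks, nor on the atomic-versus-continuous decomposition of these measures. Controlling this requires turning the qualitative biorthogonality relations $\mu_\alpha(f_\beta)=\delta_{\alpha\beta}$, together with the fundamentality of $\{f_\alpha\}$, into a \emph{quantitative} statement bounding, uniformly over the whole norming subspace $\Z$, the mass any normalised measure can place on the block carrying $g_n$. Here the heredity result of Theorem~\ref{thm: norming in WLD subspace} is instructive in the negative: since every reflexive (hence WCG) block $Z_n$ trivially carries a $1$-norming M-basis, the obstruction cannot be localised in any nice subspace and must be a genuinely global feature of how the blocks are stitched together --- which is exactly what forces the use of the entangled, non-product norm rather than a plain sum.

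Finally, the two requirements placed on $\K$ pull in opposite directions and reconciling them is the crux of the construction: the geometry must be rich and entangled enough to guarantee a \emph{uniform} contraction factor $c<1$ in the telescoping estimate for \emph{every} M-basis, yet tame enough that the resulting set remains weakly compact in $\ell_2(\Gamma)$. I would therefore spend the bulk of the effort designing one combinatorial scheme for $\K$ that simultaneously (i) is manifestly uniformly $\ell_2$-summable, so that uniform Eberleinness --- and with it the WCG property demanded by the statement --- is straightforward to verify, and (ii) embeds the fixed-proportion leak into the definition of the norm in a way that survives passage to an arbitrary norming subspace. Once such a scheme is in place, letting $n\to\infty$ contradicts $\lambda>0$, proving that $\C(\K)$ admits no norming M-basis.
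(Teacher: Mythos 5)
Your proposal is a strategy outline rather than a proof, and the two places where you locate the difficulty are not where the actual argument puts it. First, the gluing. You insist that the infinitely many pieces must be ``entangled'' by a specially designed norm so that mass leaks multiplicatively from block to block; but for a $\C(\K)$ space the norm is the sup norm and cannot be designed, and more importantly no entanglement between the pieces is needed at all. The paper's reduction (see the discussion before Theorem~\ref{thm: K for no norming}) takes, for each $n$, a \emph{separate} uniform Eberlein compactum $\K_n\subseteq \frac1n B_{\ell_2(\{n\}\times\omega_1)}$ whose $\C(\K_n)$ admits no $1/n$-norming M-basis, and lets $\K$ be the one-point union $\bigcup_n\K_n$ (meeting only at the origin), which is weakly compact. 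Retractions of $\K$ onto $\K_n$ make $\C(\K_n)$ an isometric subspace of $\C(\K)$, and then Theorem~\ref{thm: norming in WLD subspace} --- which you cite only ``in the negative'' --- does exactly the job you assign to the entanglement: a $\lambda$-norming M-basis of $\C(\K)$ would induce one on the WCG subspace $\C(\K_n)$, impossible for $n>1/\lambda$. So the infinitude of thresholds is handled by a completely disjoint union plus heredity, and the decay across pieces is harmonic ($\lambda\leq 1/(N-1)$ per piece), not a geometric contraction $c^{\,n}$ inside one space.

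Second, and more seriously, your proposed mechanism for killing norming M-bases has no uncountable ingredient, and it must have one: every separable $\C(\K)$ admits a $1$-norming M-basis by Theorem~\ref{thm: separable M-basis}, so any ``telescoping estimate over countably many blocks $Z_n$ with peaked functions $g_n$'' that does not exploit nonseparability in an essential way cannot be valid --- it would apply verbatim to a metrisable $\K$ and yield a false statement. The real combinatorial core of Theorem~\ref{thm: K for no norming} is the tree of vectors $z_{\alpha_1,\dots,\alpha_N}$ indexed by $\omega_1^N$, with coherent initial segments and $N$ ``large'' coordinates per vector; the estimate $\lambda\leq 1/(N-1)$ for an \emph{arbitrary} M-basis comes from stabilisation/counting arguments over the uncountable branching at each of the $N$ levels. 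Your sketch leaves precisely this --- the construction of the combinatorial scheme and the proof that it defeats every biorthogonal system --- as future work, while the parts you do describe (infinite amalgamation, multiplicative leak) either duplicate what the heredity theorem gives for free or would fail for the reason above. I would not accept this as a proof of the theorem.
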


\begin{remark} As we mentioned in Remark \ref{rmk: norming equiv for C(K)}, the fact that the counterexample is a $\C(\K)$ space is not a strengthening of the result. On the other hand, the counterexample is even Hilbert generated, by the discussion from Section \ref{sec: non-sep classes}. This is indeed a stronger result, as, for instance, there are Eberlein compacta that are not uniform Eberlein, \cite{BeSt, Mar}.
\end{remark}

We now explain some ingredients in the proof of Theorem \ref{thm: H WCG not norming}. First of all, it is enough to build a uniform Eberlein compact space $\K_n\subseteq \ell_2(\omega_1)$ such that $\C(\K_n)$ does not admit a $1/n$-norming M-basis. Indeed, up to a scaling, one can assume that $\K_n\subseteq \frac{1}{n} B_{\ell_2(\{n\}\times \omega_1)} \subseteq \ell_2(\N\times \omega_1)$ and that $\K_n$ contains the origin. Then 
\[ \K = \bigcup_{n\in\N} \K_n\subseteq \ell_2(\N\times \omega_1) \]
is easily seen to be $w$-sequentially compact, hence $w$-compact. Moreover, there exists a retraction from $\K$ onto each $\K_n$, thus $\C(\K_n)$ is isometrically a subspace of $\C(\K)$. By Theorem \ref{thm: norming in WLD subspace} below, that we quoted already, the existence of a $\lambda$-norming M-basis in $\C(\K)$ would yield the existence of a $\lambda$-norming M-basis in $\C(\K_n)$ for each $n$, which is false for $n>1/\lambda$. Therefore, it is enough to prove the following theorem.

\begin{theorem}\label{thm: K for no norming} For every $n\in \N$ there exists a uniform Eberlein compactum $\K_n\subseteq \ell_2(\omega_1)$ such that $\C(\K_n)$ does not admit a $1/n$-norming M-basis.
\end{theorem}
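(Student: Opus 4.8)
The plan is to convert the non-existence of a $1/n$-norming M-basis into the non-existence of a uniformly bounded system of projections, and to defeat the latter by building into $\K_n$ an $n$-fold ``interference'' pattern on $\omega_1$.

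I would first construct $\K_n$ from a combinatorial family on $\omega_1$. Fix a family $\mathcal{F}=\{F_\gamma\}_{\gamma<\omega_1}$ of $n$-element subsets of $\omega_1$ whose intersection pattern is prescribed so that, above any countable set of coordinates, one can still find members of $\mathcal{F}$ realising full $n$-fold overlap with a chosen finite configuration; securing this persistence is where the uncountability of $\omega_1$ enters, typically via a pressing-down or $\Delta$-system argument. To each $F_\gamma$ I attach a vector $v_\gamma=\tfrac{1}{\sqrt n}\sum_{\beta\in F_\gamma}\e_\beta\, u_\beta\in\ell_2(\omega_1)$, where $(u_\beta)$ is the canonical basis and $\e_\beta\in\{-1,1\}$ are suitable signs, and set $\K_n=\overline{\{v_\gamma\colon\gamma<\omega_1\}\cup\{0\}}^{\,w}$. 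Since the $v_\gamma$ lie in a fixed multiple of $B_{\ell_2(\omega_1)}$, the set $\K_n$ is a bounded, weakly closed subset of a Hilbert space, hence weakly compact; thus $\K_n$ is uniform Eberlein, and it has weight $\omega_1$ because $\omega_1$ coordinates are active.

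Next I would read off projections from a hypothetical M-basis. Suppose, towards a contradiction, that $\{e_\alpha;\mu_\alpha\}_{\alpha<\omega_1}$ is a $1/n$-norming M-basis of $\C(\K_n)$ (the index set has size $\omega_1$, the density character). Passing to the norming norm $\|\cdot\|_\Z$ of $\Z={\rm span}\{\mu_\alpha\}$ makes the M-basis $1$-norming at the cost of a distortion at most $n$, and the John--Zizler construction \cite{JZ norming WCG} then produces a PRI $\{P_\delta\}$ adapted to the M-basis, with $P_\delta[\C(\K_n)]=\overline{\rm span}\{e_\alpha\colon\alpha<\delta\}$ and $\|P_\delta\|_\Z=1$, so that $\|P_\delta\|\le n$ in the original norm. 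A routine closing-off then yields a club $C\subseteq\omega_1$ of ordinals $\delta$ for which the coordinates used by $\{e_\alpha\colon\alpha<\delta\}$ and charged by $\{\mu_\alpha\colon\alpha<\delta\}$ coincide with a fixed countable set $S_\delta\subseteq\omega_1$, increasing and continuous along $C$; for $\delta\in C$ the projection $P_\delta$ is, under the identification of its range with $\C$ of a sub-compact $\K_n^\delta$ of $\K_n$, a restriction-extension operator determined by $S_\delta$.

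The contradiction then comes from a single quantitative estimate, which I expect to be the main obstacle. Using the persistence of $\mathcal{F}$ above $S_\delta$, for a suitable $\delta\in C$ I would locate a fresh coordinate $\gamma^*\notin S_\delta$, a point $x\in\K_n$ with $x(\gamma^*)\neq0$, and a norm-one function $f\in\C(\K_n)$ supported on the interference configuration of $x$, such that forgetting the coordinate $\gamma^*$ inflates the norm: $\|P_\delta f\|>n$. This contradicts $\|P_\delta\|\le n$ and proves the theorem. The delicate point is to design the intersection pattern of $\mathcal{F}$ strong enough that killing one fresh coordinate multiplies the norm by a factor exceeding $n$ --- this is exactly what allows $n$ to range over all of $\N$ and pushes past Godefroy's $\lambda>1/2$ barrier --- while keeping $\{v_\gamma\}$ bounded so that $\K_n$ stays uniform Eberlein. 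Reconciling these opposing demands, and guaranteeing that full $n$-fold interference survives above every countable stage, is where the real work lies.
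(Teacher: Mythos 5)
Your proposal has a genuine gap at its core, and your compact space is structurally different from the one the paper builds. The paper's $\K_n$ is \emph{not} a flat family of $n$-element sign vectors: it is the weak closure of vectors $z_{\alpha_1,\dots,\alpha_N}$ indexed by the tree $\omega_1^N$, each consisting of $N$ \emph{nested} blocks with coherent initial segments (if $\alpha_1=\beta_1,\dots,\alpha_k=\beta_k$ then the two vectors agree up to the $k$-th block position), plus rapidly decaying tail coordinates taken from disjoint Cantor sets. This $N$-level coherence is exactly what makes the argument in \cite{Hajek} work: one stabilises along the tree and telescopes over the $N$ levels, losing a fixed amount of norming at each level, which compounds to force $\lambda\leq\frac{1}{N-1}$. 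A one-level $\Delta$-system of $n$-element supports gives you vectors that agree on a common root and are disjoint elsewhere; there is no evident mechanism by which ``killing one fresh coordinate'' of such a configuration inflates the sup-norm of a continuous function by a factor exceeding $n$, and you explicitly leave this --- the entire quantitative content of the theorem --- as an expectation rather than an argument.

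There is a second, independent problem in your reduction. After running John--Zizler to get a PRI $\{P_\delta\}$ with $\|P_\delta\|\leq n$, you assert that for club-many $\delta$ the projection $P_\delta$ is ``a restriction-extension operator determined by $S_\delta$'', i.e.\ that its range can be identified with $\C$ of a subcompact of $\K_n$ cut out by a countable coordinate set. The M-basis $\{e_\alpha;\mu_\alpha\}$ of $\C(\K_n)$ is arbitrary: its vectors need have no relation whatsoever to the coordinate structure of $\ell_2(\omega_1)$, and the range of a PRI projection built from an M-basis is just a closed linear span, not a subalgebra and not a $\C(L)$ space. A closing-off argument can synchronise the countable supports of the $e_\alpha$ and $\mu_\alpha$ against a fixed countable family of functionals you choose in advance (e.g.\ Dirac measures at points of $\K_n$), but it cannot convert $P_\delta$ into a composition operator with a retraction of $\K_n$. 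The proof in \cite{Hajek} avoids this issue by working directly with the norming inequality \eqref{eq: norming M-basis} applied to carefully chosen differences of Dirac functionals $\delta_z-\delta_{z'}$ for points $z,z'$ agreeing on the first $k$ blocks, rather than passing through a PRI. So both the construction and the reduction would need to be replaced before the final estimate could even be attempted.
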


\begin{proof} Fix $N\in \N$; we will explain the construction of a compact space $\K\subseteq \ell_2(\omega_1)$ such that the existence of a $\lambda$-norming M-basis in $\C(\K)$ forces $\lambda \leq \frac{1}{N-1}$ (so that $\K$ actually serves as $\K_{n-2}$). We will limit ourselves to constructing $\K$ and refer to \cite{Hajek} for the claim concerning norming M-bases.

For each choice of $\alpha_1,\dots, \alpha_N\in \omega_1$ we build a vector $z_{\alpha_1,\dots, \alpha_N}\in B_{\ell_2(\omega_1)}$ that consists of $N$ blocks. Within each block, the last coordinate is `large' in value, while the remaining coordinates are part of a sequence that goes to $0$ fast enough (and belongs to a sequence of Cantor sets). Moreover, the values of such sequences are disjoint for all blocks and for all vectors $z_{\alpha_1,\dots, \alpha_N}$ and also the positions of the blocks differ for each $z_{\alpha_1,\dots, \alpha_N}$. In the notation below, the ordinals $s_1(\alpha_1)< s_2(\alpha_1, \alpha_2)< \dots< s_N(\alpha_1,\dots, \alpha_N)$ correspond to the positions of the blocks, the functions $H(\alpha_1,\dots ,\alpha_k)$ ($k=1,\dots,N$) to the values of the sequences, and the functions $T(\alpha_1, \dots, \alpha_k)$ to the position of the sequence within the corresponding block. Now to the details.

For every $j\in \N$ pick a set $D_j$, homeomorphic to the Cantor ternary set, such that
\[ D_j\subseteq \left(\frac{1}{2^{2N+2j+2}}, \frac{1}{2^{2N+2j+1}}\right). \]
For each $k\leq N$ and each choice of $\alpha_1,\dots, \alpha_k\in \omega_1$ choose a countable set
\[ H(\alpha_1,\dots, \alpha_k)=\{t_j\}_{j=1}^\infty\in \bigcup_{j=1}^\infty D_j \]
with $t_j\in D_j$ for all $j$ and such that the sets $H(\alpha_1,\dots, \alpha_k)$ are mutually disjoint. This is easily done by transfinite induction: once all sets $H(\alpha_1,\dots, \alpha_k)$ have been chosen for all $\alpha_1,\dots, \alpha_k\leq \beta$, only countably many points have been taken in each $D_j$, that has cardinality of the continuum. Hence there is room for the next choice.

Now we choose the blocks. Take mappings $s_k\colon {\omega_1}^k\to \omega_1$, $k=1,\dots, N$ such that
\[ \eta\mapsto s_k(\alpha_1,\dots, \alpha_{k-1}, \eta)\;\; \mbox{ is strictly increasing}\]
for all $\alpha_1,\dots, \alpha_{k-1}$ and such that for $k\geq 2$
\[ s_{k-1}(\alpha_1,\dots, \alpha_{k-1})< s_k(\alpha_1,\dots, \alpha_{k-1}, \eta)\;\; \mbox{ for all } \eta. \]
Once more, these mappings are built by induction: $s_1$ is just a strictly increasing map from $\omega_1$ to itself. Then for fixed $\alpha_1$ choose a strictly increasing map $s_2(\alpha_1, \cdot)\colon \omega_1 \to (s_1(\alpha_1), \omega_1)$, and so on.

Finally, take injections $T(\alpha_1)\colon [0,s_1(\alpha_1))\to H(\alpha_1)$ and, for $k\geq 2$,
\[ T(\alpha_1,\dots, \alpha_k)\colon (s_{k-1}(\alpha_1,\dots, \alpha_{k-1}), s_k(\alpha_1,\dots, \alpha_k))\to H(\alpha_1,\dots, \alpha_k). \]

We are now in position to define the vectors $z_{\alpha_1,\dots, \alpha_N}$:
\[ z_{\alpha_1,\dots, \alpha_N}(\eta)\coloneqq \begin{cases}
    0& \mbox{if } \eta> s_N(\alpha_1,\dots, \alpha_N),\\
    \frac{1}{2^{k+1}}& \mbox{if } \eta= s_k(\alpha_1,\dots, \alpha_k),\\
    T(\alpha_1,\dots, \alpha_k)(\eta)& \mbox{if } s_{k-1}(\alpha_1,\dots, \alpha_{k-1})< \eta< s_k(\alpha_1,\dots, \alpha_k).\\
\end{cases} \]
A simple computation shows that $z_{\alpha_1,\dots, \alpha_N}\in B_{\ell_2(\omega_1)}$. Moreover, the vectors $z_{\alpha_1,\dots, \alpha_N}$ are a discrete set in the weak topology, because $z_{\alpha_1,\dots, \alpha_N}(\eta)> \frac{1}{2^{2N}}$ forces $\eta= s_k(\alpha_1,\dots, \alpha_k)$ for some $k=1,\dots, N$ and the ordinals $s_1(\alpha_1), s_2(\alpha_1, \alpha_2),\dots, s_N(\alpha_1,\dots, \alpha_N)$ identify the vector $z_{\alpha_1,\dots, \alpha_N}$ uniquely. Another important property is that if $\alpha_1= \beta_1,\dots, \alpha_k=\beta_k$, then the initial parts of the vectors coincide:
\[ z_{\alpha_1,\dots, \alpha_N} \cut_{[0,s_k(\alpha_1,\dots, \alpha_k))} = z_{\beta_1,\dots, \beta_N} \cut_{[0,s_k(\alpha_1,\dots, \alpha_k))}. \]

Finally, we can obtain the compact set $\K$, by setting
\[ \K\coloneqq \overline{\{ z_{\alpha_1,\dots, \alpha_N}\} _{\alpha_1,\dots, \alpha_N\in \omega_1}}^{\; w}. \]
The last property we wish to point out is that $\K$ is actually zero-dimensional: in fact, if $z\in \K$
\[ z(\eta) \in \bigcup_{j=1}^\infty D_j \cup \left\{0, \frac{1}{2^2}, \dots, \frac{1}{2^{N+1}}\right\}, \]
which is zero-dimensional. Then one just needs to notice that being zero-dimensional passes to subspaces and arbitrary products.
\end{proof}

We now pass to the negative solution to Problem \ref{probl: Godefroy}. The main part of the argument involves building a certain compact topological space $\K_\ro$, that we will also mention in Sections \ref{sec: C(K)} and \ref{sec: semi-Eberlein}; then the desired counterexample will be a subspace of $\C(\K_\ro)$.

The compact space will be constructed inside the power set $\mathcal{P}(\omega_1)$, that we identify as usual with the product topological space $\{0,1\}^{\omega_1}$; thus we have a compact `pointwise' topology on $\mathcal{P}(\omega_1)$. With a slight abuse of notation, we write $[\omega_1]^2=\{(\alpha,\beta)\in{\omega_1}^2\colon \alpha<\beta\}$. The main tool for the construction of the compact space is the existence of ordinal metrics on $\omega_1$, in particular the existence of a function as in the following proposition.
\begin{proposition}\label{Prop: The ro function} There exists a function $\ro\colon [\omega_1]^2 \to\omega$ such that
\begin{enumerate}[label={\rm ($\ro$\arabic*)},ref=$\ro$\arabic*]
    \item \label{item: triangle1} $\ro(\alpha,\gamma)\leq \max\{\ro(\alpha,\beta), \ro(\beta,\gamma)\}$ for every $\alpha<\beta<\gamma<\omega_1$,
    \item \label{item: triangle2} $\ro(\alpha,\beta)\leq \max\{\ro(\alpha,\gamma), \ro(\beta,\gamma)\}$ for every $\alpha<\beta<\gamma<\omega_1$.
    \item \label{item: positive} $\ro(\alpha,\beta)>0$ for all $\alpha<\beta<\omega_1$,
    \item \label{item: injective} $\ro(\alpha,\gamma)\neq\ro(\beta,\gamma)$, for all $\alpha<\beta<\gamma<\omega_1$.
\end{enumerate}
\end{proposition}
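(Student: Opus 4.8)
The plan is to construct $\ro$ by the method of minimal walks on $\omega_1$, which is tailor-made to produce $\omega$-valued functions satisfying the two subadditivity inequalities \ref{item: triangle1} and \ref{item: triangle2}. The first step is to fix a \emph{$C$-sequence} $\langle C_\beta\rangle_{\beta<\omega_1}$: for each limit $\beta$ a cofinal subset $C_\beta\subseteq\beta$ of order type $\omega$, together with $C_{\beta+1}=\{\beta\}$ and $C_0=\emptyset$. The walk from $\gamma$ down to $\alpha$ is the finite decreasing chain obtained by repeatedly replacing the current ordinal $\xi$ by $\min(C_\xi\setminus\alpha)$ until $\alpha$ is reached. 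Following Todorcevic, I would then define $\ro$ by recursion on the larger coordinate through
\[ \ro(\alpha,\beta)=\max\Big\{\,|C_\beta\cap\alpha|,\ \ro\big(\alpha,\min(C_\beta\setminus\alpha)\big),\ \max\nolimits_{\xi\in C_\beta\cap\alpha}\ro(\xi,\alpha)\,\Big\}, \]
with the base clause $\ro(\alpha,\alpha)=0$ and the convention $\max\emptyset=0$; the recursion is well founded because both $\min(C_\beta\setminus\alpha)$ and each $\xi\in C_\beta\cap\alpha$ lead to strictly smaller instances. Replacing $\ro$ by $\ro+1$ at the very end forces \ref{item: positive}, since adding a constant does not affect any inequality of the form $\le\max$.

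The core of the argument is to establish \ref{item: triangle1} and \ref{item: triangle2}, which are precisely the assertion that $\ro$ is subadditive. I would prove both at once by induction on $\gamma$, unwinding the recursive definition along the relevant walks: for $\alpha<\beta<\gamma$ one compares the walk from $\gamma$ to $\alpha$ with those from $\gamma$ to $\beta$ and from $\beta$ to $\alpha$, and checks that each of the three quantities entering the $\max$ at $\beta$ is dominated by the right-hand side. The delicate bookkeeping occurs at the limit levels, where $C_\gamma$ is infinite and the term $|C_\gamma\cap\alpha|$ fluctuates with $\alpha$; controlling this term along the walk, and matching the traces of the two nested walks, is the main obstacle and is exactly the content of the subadditivity lemma for the walk function.

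Finally, property \ref{item: injective}, the injectivity of each column $\alpha\mapsto\ro(\alpha,\gamma)$, is the one requirement that does not come for free, and reconciling it with \ref{item: triangle1} and \ref{item: triangle2} is in my view the subtler half of the proof. As a first reduction I would record the standard local finiteness of the walk function, namely that $\{\alpha<\gamma\colon \ro(\alpha,\gamma)\le n\}$ is finite for every $n$ and every $\gamma$; this already makes every column finite-to-one. To upgrade finite-to-one to injective I would refine the values inside each (finite) fibre by a tie-break chosen coherently across all second coordinates simultaneously — for instance, by folding a one-to-one coding of $\alpha$ into the recursion, or by passing to the rank of $\alpha$ in the ordering of $[0,\gamma)$ by $\big(\ro(\cdot,\gamma),\,\cdot\,\big)$. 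The point to watch, and the genuine technical heart of this last step, is that such a reparametrisation depends on the second coordinate, so one must verify that it preserves both max-inequalities for \emph{every} triple $\alpha<\beta<\gamma$ at once rather than destroying them.
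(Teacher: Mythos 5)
The paper offers no proof of this proposition, deferring it entirely to \cite[Lemma 3.2.2]{T Walks} (with details in \cite[Section 3]{HRST}), and your proposal reconstructs precisely that construction: the walk function $\rho(\alpha,\beta)=\max\bigl\{|C_\beta\cap\alpha|,\ \rho(\alpha,\min(C_\beta\setminus\alpha)),\ \max_{\xi\in C_\beta\cap\alpha}\rho(\xi,\alpha)\bigr\}$ built from a $C$-sequence, its two subadditivity inequalities, and a rank/coding tie-break inside the finite fibres (in Todor\v{c}evi\'c's version, $\bar\rho(\alpha,\beta)=2^{\rho(\alpha,\beta)}\bigl(2\,|\{\xi\le\alpha:\rho(\xi,\beta)\le\rho(\alpha,\beta)\}|+1\bigr)$) that upgrades local finiteness to the injectivity ($\ro$4) while preserving ($\ro$1)--($\ro$2) and forcing ($\ro$3). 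The two verifications you explicitly defer --- subadditivity of the walk function and its preservation under the tie-break --- are exactly the content of the cited lemma, so your outline is correct and follows the same route as the paper's source.
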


It is also convenient to add the `boundary condition' $\ro(\alpha,\alpha)=0$. Conditions \eqref{item: triangle1} and \eqref{item: triangle2} above imply that $\ro$ resembles in certain ways a metric on $\omega_1$, whence the name \emph{ordinal metric}. They were introduced by Todor\v{c}evi\'c in \cite{T Acta} and analysed in detail in \cite{T Walks}; see also \cite{Bekkali}. The construction of a function $\ro$ as in Proposition \ref{Prop: The ro function} can be found in \cite[Lemma 3.2.2]{T Walks} and some details are also in \cite[Section 3]{HRST}.

Having fixed such a function $\ro\colon [\omega_1]^2 \to\omega$, for $n<\omega$ and $\alpha<\omega_1$ we let
\[ F_n(\alpha):=\{\xi\leq\alpha \colon \ro(\xi,\alpha)\leq n\}; \]
note that $F_0(\alpha)=\{\alpha\}$, by \eqref{item: positive} above, while $|F_n(\alpha)|\leq n+1$ by \eqref{item: injective}. We are now in position to define the desired compact space as follows
$$\F_\ro:=\{F_n(\alpha)\colon n<\omega,\, \alpha<\omega_1\} \qquad\text{and}\qquad \K_\ro:=\overline{\F_\ro},$$
where the closure is intended in the pointwise topology of $\mathcal{P}(\omega_1)$. Notice that the sequence $(F_n(\alpha))_{n=1}^\infty$ converges pointwise to the set $[0,\alpha]$; in other words, the sets $F_n(\alpha)$ can be used as inner approximations of all countable ordinal intervals. We can now list the main properties of the compact space $\K_\ro$.

\begin{theorem}[{\cite[Theorem 3.5]{HRST}}]\label{thm: prop of Kro} The compact space $\K_\ro$ defined above has the following properties:
\begin{romanenumerate}
\item \label{item: Kro(i)} $\{\alpha\}\in\K_\ro$ for every $\alpha<\omega_1$,
\item \label{item: Kro(ii)} $[0,\alpha)\in\K_\ro$ for every $\alpha\leq\omega_1$,
\item \label{item: Kro(iii)} if $A\in\K_\ro$ is an infinite set, then $A=[0,\alpha)$ for some $\alpha\leq\omega_1$,
\item \label{item: Kro(iv)} $\K_\ro$ is scattered.
\end{romanenumerate}
\end{theorem}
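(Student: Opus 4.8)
The plan is to treat the four assertions in increasing order of difficulty, the bulk of the work lying in the third one, from which scatteredness will then follow painlessly. The first assertion is immediate: since $\ro(\xi,\alpha)>0$ for $\xi<\alpha$ by \eqref{item: positive} while $\ro(\alpha,\alpha)=0$, we have $F_0(\alpha)=\{\alpha\}$, whence $\{\alpha\}\in\F_\ro\subseteq\K_\ro$. For the second assertion I would first record that the sets $F_n(\alpha)$ increase with $n$ and that $\bigcup_{n<\omega}F_n(\alpha)=[0,\alpha]$ (every value $\ro(\xi,\alpha)$ being finite), so $F_n(\alpha)\to[0,\alpha]$ pointwise and hence $[0,\alpha]\in\K_\ro$ for each $\alpha<\omega_1$; this is exactly the remark preceding the statement. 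The initial segments are then obtained by a second passage to the limit: $[0,\gamma+1)=[0,\gamma]$ is already of this form, while for a limit ordinal $\alpha\leq\omega_1$ one has $[0,\beta]\to[0,\alpha)$ as $\beta\nearrow\alpha$, and $\{\beta\}\to\varnothing=[0,0)$ as $\beta\to\omega_1$; closedness of $\K_\ro$ does the rest.

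The heart of the matter is the third assertion, and the key point I would isolate is the following dichotomy for a pointwise convergent net $F_{n_d}(\alpha_d)\to A$: \emph{either the parameters $n_d$ stay bounded along a cofinal set of indices, in which case $A$ is finite, or $A$ is downward closed}. Indeed $|F_n(\alpha)|\leq n+1$ by \eqref{item: injective}, so a pointwise limit of sets drawn from a subfamily with $n_d\leq m$ has at most $m+1$ elements. Thus it suffices to show that an infinite $A$ is downward closed, since a downward closed subset of $\omega_1$ is precisely an initial segment $[0,\alpha)$. Suppose then, towards a contradiction, that $\gamma<\beta$ with $\beta\in A$ and $\gamma\notin A$. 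Convergence at the coordinates $\beta$ and $\gamma$ yields an index beyond which $\beta\leq\alpha_d$, $\ro(\beta,\alpha_d)\leq n_d$ and, since $\gamma<\beta\leq\alpha_d$, also $\ro(\gamma,\alpha_d)>n_d$; in particular $\ro(\gamma,\alpha_d)>\ro(\beta,\alpha_d)$. Now the two ultrametric-type inequalities \eqref{item: triangle1} and \eqref{item: triangle2}, applied to $\gamma<\beta\leq\alpha_d$, force $\ro(\gamma,\alpha_d)=\ro(\gamma,\beta)$: from \eqref{item: triangle1} the strict inequality rules out the alternative $\ro(\gamma,\alpha_d)\leq\ro(\beta,\alpha_d)$, giving $\ro(\gamma,\alpha_d)\leq\ro(\gamma,\beta)$, while \eqref{item: triangle2} gives the reverse bound. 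Hence $n_d<\ro(\gamma,\alpha_d)=\ro(\gamma,\beta)=:m$ along a cofinal set of indices; since $m$ is a \emph{fixed} positive integer, the size bound above forces $|A|\leq m$, contradicting the infinitude of $A$.

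Granting the third assertion, I would deduce scatteredness by ruling out a non-empty perfect subset (equivalently, by showing that the perfect kernel of $\K_\ro$ is empty). Were $P\subseteq\K_\ro$ non-empty and perfect, then, being compact Hausdorff and without isolated points, it would contain a homeomorphic copy $C$ of the Cantor set $2^\omega$; in particular $|C|=\mathfrak{c}$ and $C$ is metrisable. A metrisable compact subset of $\{0,1\}^{\omega_1}$ depends on countably many coordinates, so there is a countable $\Gamma_0\subseteq\omega_1$ for which the trace map $A\mapsto A\cap\Gamma_0$ is injective on $C$. But every $A\in C\subseteq\K_\ro$ is either finite or, by the third assertion, an initial segment $[0,\alpha)$; consequently $A\cap\Gamma_0$ is either a finite subset of $\Gamma_0$ or an initial segment of $\Gamma_0$, and there are only countably many such subsets of the countable set $\Gamma_0$. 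This contradicts the injectivity of the trace map on the set $C$ of cardinality $\mathfrak{c}$, so $\K_\ro$ is scattered.

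I expect the genuine obstacle to be the downward-closedness step inside the third assertion: everything hinges on extracting, purely from the failure of downward closure, the fixed bound $n_d<\ro(\gamma,\beta)$, which is precisely what couples the combinatorial size estimate $|F_n(\alpha)|\leq n+1$ with the metric axioms \eqref{item: triangle1}–\eqref{item: injective}. The only other point requiring care is the standard but non-trivial fact, invoked in the last assertion, that a metrisable compactum sitting inside a product of two-point spaces is supported on countably many coordinates.
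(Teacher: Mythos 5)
Parts \eqref{item: Kro(i)}--\eqref{item: Kro(iii)} of your proposal are correct, and for \eqref{item: Kro(iii)} your argument is essentially the paper's read contrapositively: the paper fixes $\tilde{\alpha}<\alpha\in A$, sets $k=\ro(\tilde{\alpha},\alpha)$, and uses $k$ further points of the infinite set $A$ to force $n\geq k$ for any approximant $F_n(\beta)$ in a small neighbourhood, then applies ($\ro$1) to get $\tilde{\alpha}\in F_n(\beta)$; you instead let the failure of downward closure force $n_d<\ro(\gamma,\beta)$ along the approximating net and contradict infinitude via $|F_n(\alpha)|\leq n+1$. Both arguments couple the cardinality bound with ($\ro$1) in exactly the same way. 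One small remark: the equality $\ro(\gamma,\alpha_d)=\ro(\gamma,\beta)$, for which you invoke ($\ro$2), is more than you need --- the one-sided bound $\ro(\gamma,\alpha_d)\leq\ro(\gamma,\beta)$ coming from ($\ro$1) and the strict inequality already yields $n_d<\ro(\gamma,\beta)$, and the paper explicitly notes that ($\ro$2) plays no role here.

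Part \eqref{item: Kro(iv)} contains a genuine gap. You assert that a non-empty compact Hausdorff space without isolated points contains a homeomorphic copy of the Cantor set. This is true for Polish spaces (Cantor--Bendixson), but false for general compact Hausdorff spaces: $\beta\omega\setminus\omega$ is compact, has no isolated points, and admits no non-trivial convergent sequences, so every compact metrisable subspace of it is finite and in particular it contains no copy of $2^\omega$. Since $\K_\ro$ is far from metrisable, this step cannot be invoked, and the ambient cube $\{0,1\}^{\omega_1}$ does not help (under CH it contains non-scattered compacta without convergent sequences). What your trace-counting argument correctly establishes is only that $\K_\ro$ contains no copy of the Cantor set, which is strictly weaker than scatteredness. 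A proof of \eqref{item: Kro(iv)} has to use \eqref{item: Kro(iii)} in a different way; the paper indicates a maximality argument in the spirit of the proof that compact subsets of $c_0(\Gamma)\cap\{0,1\}^\Gamma$ are scattered --- roughly, in a non-empty closed subset with no isolated point one builds a strictly increasing chain of finite elements, its cluster points are initial segments by \eqref{item: Kro(iii)}, and the initial segments in $\K_\ro$ form a compact set homeomorphic to a subset of $[0,\omega_1]$, hence scattered. Some argument along these lines needs to be supplied.
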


Notice that $\K_\ro$ is the closure of a collection of finite sets, hence elements of $\K_\ro$ can be approximated by finite sets; in this respect, $\K_\ro$ resembles the Tikhonov cube $[0,1]^{\omega_1}$, or the Cantor cube $\{0,1\}^{\omega_1}$. On the other hand, $\K_\ro$ contains very few infinite sets, just the initial intervals, for which reason it is scattered. This property resembles the behaviour of $[0,\omega_1]$ (we will return to this point in Remark \ref{rmk: weak P point in Kro}).

\begin{proof} The validity of \eqref{item: Kro(i)} and \eqref{item: Kro(ii)} follows easily from what has been said before, while \eqref{item: Kro(iv)} can be deduced from \eqref{item: Kro(iii)} via a certain maximality argument, somewhat similar to the proof that Eberlein subsets of $\{0,1\}^\Gamma$ are scattered (as in \cite[Lemma 2.53]{HMVZ}).

We shall now explain the proof of \eqref{item: Kro(iii)}. (The argument that follows is shorter than the one in \cite{HRST} and was suggested to us by Chris Lambie-Hanson). Fix an infinite set $A\in\K_\ro$. Our goal is to show that
\[ \alpha\in A,\, \tilde{\alpha}<\alpha \implies \tilde{\alpha} \in A. \]
Let $k\coloneqq \ro(\tilde{\alpha},\alpha)$ and fix any neighbourhood $\mathcal{U}$ of $A$ in $\K_\ro$. Since $A$ is infinite, we can pick mutually distinct $\beta_1,\dots, \beta_k\in A \setminus\{\alpha\}$. By definition of the pointwise topology, we can find a neighbourhood $\mathcal{V}\subseteq \mathcal{U}$ of $A$ such that every $F\in \mathcal{V}$ satisfies $\alpha,\beta_1, \dots,\beta_k\in F$. Take $F_n(\beta) \in \F_\ro\cap \mathcal{V}$ (recall that $\F_\ro$ is dense in $\K_\ro$); then $\alpha,\beta_1, \dots,\beta_k\in F_n(\beta)$, whence $k+1\leq |F_n(\beta)|\leq n+1$. Moreover, $\alpha\in F_n(\beta)$ implies $\alpha\leq\beta$ and $\ro(\alpha,\beta)\leq n$. By our choice of $k$ we also have $\ro(\tilde{\alpha},\alpha)= k\leq n$, hence \eqref{item: triangle1} implies that $\ro(\tilde{\alpha},\beta)\leq n$. This yields that $\tilde{\alpha}\in F_n(\beta)$ and therefore $\tilde{\alpha} \in A$.
\end{proof}

Incidentally, let us remark that we did not need the second `triangle inequality' \eqref{item: triangle2} in the argument. Having Theorem \ref{thm: prop of Kro} at our disposal, we can now explain the construction of the Banach space that solves Problem \ref{probl: Godefroy} in the negative. The following is the main result of \cite{HRST}.
\begin{theorem}[{\cite[Theorem A]{HRST}}]\label{thm: HRST} There exists an Asplund space $\X_\ro$ with a $1$-norming M-basis such that $\X_\ro$ is not WCG.
\end{theorem}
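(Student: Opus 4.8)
The plan is to construct the Banach space $\X_\ro$ as a suitable subspace of $\C(\K_\ro)$ and to verify the three required properties—Asplundness, the existence of a $1$-norming M-basis, and failure of the WCG property—by exploiting the structural features of $\K_\ro$ recorded in Theorem \ref{thm: prop of Kro}. The natural candidate is to let $\X_\ro$ be the closed linear span in $\C(\K_\ro)$ of the coordinate evaluations $\ev_\alpha\colon A\mapsto \bone_A(\alpha)$ for $\alpha<\omega_1$, possibly together with the constant functions or a distinguished family of characteristic functions of the initial intervals $[0,\alpha)$. The dual objects to these functions are the Dirac-type measures supported at the finite sets $F_n(\alpha)\in\F_\ro$, and the biorthogonality should be arranged by a Gram--Schmidt-style triangularisation indexed along $\omega_1$, using property \eqref{item: Kro(iii)} to guarantee that the infinite elements of $\K_\ro$ are only the initial intervals.

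First I would verify the \textbf{Asplund property}, which is the easiest of the three. Since $\K_\ro$ is scattered by Theorem \ref{thm: prop of Kro}\eqref{item: Kro(iv)}, the space $\C(\K_\ro)$ is Asplund by the characterisation recalled in Section \ref{sec: non-sep classes} (a $\C(\K)$ space is Asplund iff $\K$ is scattered). As the Asplund property passes to closed subspaces, $\X_\ro$ is automatically Asplund regardless of the precise choice of subspace. This reduces the problem to arranging the remaining two, competing, requirements on $\X_\ro$.

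Next I would establish the \textbf{$1$-norming M-basis}. The fundamentality and totality of the biorthogonal system should follow from the density of $\F_\ro$ in $\K_\ro$ together with the fact that finite sets separate points of $\K_\ro$. The crux is the norming estimate \eqref{eq: norming M-basis}: one must show that for every $f\in\X_\ro$ the supremum over normalised functionals in the span of the $\p_\alpha$ recovers $\|f\|_\infty$. Here the approximating finite sets $F_n(\alpha)$ play the role analogous to the Fej\'er kernels in Example \ref{ex: trig system}; because $(F_n(\alpha))_n$ converges pointwise to $[0,\alpha]$, point evaluations at elements of $\K_\ro$—including the troublesome limit intervals—can be realised as $w^*$-limits of convex combinations of finitely supported measures lying in the span of the dual system, yielding the full norm and hence $\lambda=1$. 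By the remark after the definition, it suffices to check \eqref{eq: norming M-basis} on a dense subset of $\X_\ro$, which one takes to be the functions depending on finitely many coordinates.

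Finally, the \textbf{failure of WCG} is where I expect the real difficulty to lie, and it is the reason the whole construction of $\K_\ro$ was undertaken. The strategy is to show that $(B_{\X_\ro^*},w^*)$ is \emph{not} Eberlein—indeed, by the discussion in Section \ref{sec: non-sep classes}, $\X_\ro$ WCG would force its dual ball to be Eberlein, so it suffices to exhibit a non-Eberlein subset of the dual, or equivalently to locate a copy of a non-Corson or non-Eberlein compactum inside $(B_{\X_\ro^*},w^*)$. The scattered structure should let $(B_{\X_\ro^*},w^*)$ contain a homeomorphic copy of $\K_\ro$ (or of $[0,\omega_1]$) via the evaluation map, and the presence of the `non-limit' point corresponding to $\omega_1$—which by property \eqref{item: Kro(iii)} sits atop all the initial intervals $[0,\alpha)$ without being the limit of a sequence, exactly as in the Valdivia-but-not-Corson behaviour of $[0,\omega_1]$—obstructs the Eberlein property. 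The hard part will be controlling the dual ball precisely enough: one must show that this obstruction survives in $\X_\ro^*$ after passing to the subspace, i.e. that the relevant non-metrisable point cannot be `smoothed away', and this is presumably where the fine combinatorial properties of $\ro$, beyond those used for Theorem \ref{thm: prop of Kro}, become essential. For the detailed verification I would follow \cite[Theorem A]{HRST}.
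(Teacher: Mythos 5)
Your overall architecture matches the paper's: the space is indeed $\X_\ro=\overline{\rm span}\{f_\alpha\}_{\alpha<\omega_1}\subseteq\C(\K_\ro)$ with $f_\alpha(A)=\bone_A(\alpha)$, the Asplund property follows from scatteredness exactly as you say, and the failure of WCG is obtained by embedding $[0,\omega_1]$ into $(B_{\X_\ro^*},w^*)$ via $\alpha\mapsto\delta_{[0,\alpha)}\cut_{\X_\ro}$ (injectivity coming from the fact that the $f_\alpha$ separate the initial intervals), so that the dual ball is not Corson and Theorem \ref{thm: shrinking M-basis} applies. However, your description of the biorthogonal system is off: no Gram--Schmidt triangularisation against Dirac measures at the finite sets $F_n(\alpha)$ is needed or wanted. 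Since each singleton $\{\alpha\}$ is itself a point of $\K_\ro$ by Theorem \ref{thm: prop of Kro}\eqref{item: Kro(i)}, the functionals are simply $\mu_\alpha=\delta_{\{\alpha\}}$, and biorthogonality is immediate from $\langle\delta_{\{\alpha\}},f_\beta\rangle=\bone_{\{\alpha\}}(\beta)=\delta_{\alpha,\beta}$.

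The genuine gap is in the norming argument. You propose to realise point evaluations as $w^*$-limits of convex combinations of ``normalised'' finitely supported measures, but the normalisation is exactly the issue: for a finite $A\in\F_\ro$ with $|A|=k$, the measure $\sum_{\alpha\in A}\delta_{\{\alpha\}}$ has norm $k$ in $\C(\K_\ro)^*$, so normalising it there would only yield a $\tfrac1k$-type estimate and no uniform $\lambda$, let alone $\lambda=1$. The whole point of passing to the proper subspace $\X_\ro$ is the identity \eqref{eq: sum of Diracs}, namely $\delta_A\cut_{\X_\ro}=\sum_{\alpha\in A}\delta_{\{\alpha\}}\cut_{\X_\ro}$ for $A\in\F_\ro$ (checked on the generators $f_\beta$), which shows that the \emph{restricted} sum lies in ${\rm span}\{\mu_\alpha\cut_{\X_\ro}\}$ and has norm one as a functional on $\X_\ro$. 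Combined with $\|f\|=\sup_{A\in\F_\ro}|f(A)|$ (density of $\F_\ro$ in $\K_\ro$), this gives $1$-norming directly, with no limiting procedure and no need for the sets $F_n(\alpha)$ to play a Fej\'er-kernel role. Without isolating this identity --- which fails on all of $\C(\K_\ro)$ --- your plan does not produce the constant $1$.
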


\begin{proof} The construction is performed in the Banach space $\C(\K_\ro)$. We consider the biorthogonal system $\{f_\alpha; \mu_\alpha\}_{ \alpha<\omega_1}$ in the Banach space $\C(\K_\ro)$ defined by
\begin{equation*}\begin{split} f_\alpha \in \C(\K_\ro) \qquad \qquad & f_\alpha(A)=\begin{cases} 1 & \alpha\in A \\ 0 & \alpha \notin A \end{cases} \qquad (A\in \K_\ro) \\
\mu_\alpha:=\delta_{\{\alpha\}} \in \mathcal{M}(\K_\ro) \qquad\qquad & \mu_\alpha(S)=\begin{cases} 1 & \{\alpha\} \in S \\ 0 & \{\alpha\} \notin S \end{cases} \qquad (S\subseteq \K_\ro).
\end{split}\end{equation*}
It is not difficult to see that the biorthogonal system is well defined (for instance, that $f_\alpha$ is a continuous function) and that it is actually biorthogonal.

The desired Banach space is then $\X_\ro\coloneqq \overline{\rm span}\{f_\alpha\}_{\alpha< \omega_1}$. Because $\K_\ro$ is scattered and $\X_\ro\subseteq \C(\K_\ro)$ it is immediate that $\X_\ro$ is an Asplund space. Additionally, $\{f_\alpha; \mu_\alpha\cut_{\X_\ro}\}_{ \alpha<\omega_1}$ is a biorthogonal system in $\X_\ro$ which by definition is fundamental. Thus, we have to prove that the said system is $1$-norming and that $\X_\ro$ is not WCG.

\begin{claim}\label{claim: Xro not WCG} $\X_\ro$ is not WCG.
\end{claim}
\begin{proof}[Proof of Claim \ref{claim: Xro not WCG}] \renewcommand\qedsymbol{$\square$} It is enough to prove that the dual ball $(B_{\X^*},w^*)$ contains $[0,\omega_1]$, whence it is not Corson (this shows that $\X_\ro$ is not WLD, which by Theorem \ref{thm: shrinking M-basis} is not more general in our context). We already observed that $[0,\omega_1]$ is contained in $\K_\ro$, hence in the dual ball $B_{\mathcal{M}(\K_\ro)}$. This copy is preserved by the (restriction) quotient map onto $(B_{\X^*},w^*)$, because the functions $\{f_\alpha\}_{\alpha<\omega_1}$ separate points of $\K_\ro$.

More precisely, the map $\alpha\mapsto \delta_{[0,\alpha)}\cut_{\X_\ro}$ is a continuous injection of $[0,\omega_1]$ into $(B_{\X^*},w^*)$; injectivity follows from the fact that, if $\alpha< \beta$, $\langle \delta_{[0,\alpha)}\cut_{\X_\ro}, f_\alpha\rangle =0$ and $\langle\delta_{[0,\beta)}\cut_{\X_\ro}, f_\alpha\rangle =1$.
\end{proof}

\begin{claim}\label{claim: 1-norming} $\{f_\alpha; \mu_\alpha\cut_{\X_\ro}\}_{ \alpha<\omega_1}$ is $1$-norming.
\end{claim}
\begin{proof}[Proof of Claim \ref{claim: 1-norming}] \renewcommand\qedsymbol{$\square$} Here the main point lies in proving the crucial formula
\begin{equation}\label{eq: sum of Diracs}
    \delta_A\cut_{\X_\ro} = \sum_{\alpha\in A} \delta_{\{\alpha\}}\cut_{\X_\ro} \qquad A\in \F_\ro.
\end{equation}
In particular, it follows that each $\delta_A\cut_{\X_\ro}$ belongs to ${\rm span}\{\mu_\alpha\cut_{\X_\ro}\} _{\alpha<\omega_1}$ (and it trivially has norm one). Hence, for all $f\in \X_\ro$ we obtain 
\begin{align*}
    \|f\|=& \sup_{A\in\F_\ro}|f(A)|= \sup_{A\in\F_\ro}|\langle\delta_A\cut_{\X_\ro} ,f\rangle|\\
    \leq& \sup\left\{|\langle \mu,f\rangle|\colon \mu\in {\rm span}\{\mu_\alpha\cut_{\X_\ro}\}_{\alpha<\omega_1} , \|\mu\|\leq1 \right\},
\end{align*}
which implies that the M-basis $\{f_\alpha; \mu_\alpha\cut_{\X_\ro}\}_{ \alpha<\omega_1}$ is $1$-norming.

Therefore, we are only left with the proof of \eqref{eq: sum of Diracs}. For this, it is clearly enough to check that the two functionals agree on each $f_\beta$, $\beta<\omega_1$. However, this is immediate: if $\beta\in A$, both functionals equal $1$, if $\beta\notin A$, both vanish.
\end{proof}

In the end, the point behind the choice of the subspace $\X_\ro$ is to choose a sufficiently small subspace to achieve the validity of \eqref{eq: sum of Diracs}. Admittedly, the fact that the argument runs so smoothly still bears a bit of magic for us.
\end{proof}

To conclude our discussion of \cite{HRST}, let us point out explicitly that the counterexample in Theorem \ref{thm: HRST} is not a $\C(\K)$ space, unlikely the example of Theorem \ref{thm: H WCG not norming}. In particular, we do not know the answer to the following problem.
\begin{problem}\label{probl: norming in Kro} Does the Banach space $\C(\K_\ro)$ admit a norming M-basis?
\end{problem}

We now enter the last part of the section, where we show that Godefroy's problem, Problem \ref{probl: Godefroy}, has a positive answer if we strengthen the Asplund assumption by asking that $\X$ has countable Szlenk index. The validity of this was suggested to us by Gilles Godefroy and to the best of our knowledge the result has not appeared elsewhere.

Recall that for a $w^*$-compact subset $\K$ of $\X^*$ and $\e>0$, $s_\e(\K)$ is the set obtained by removing from $\K$ all $w^*$-open subsets of $\K$ with diameter less than $\e$. Then the derivation $s_\e^\alpha(\K)$ is defined by $s_\e^{\alpha+1}(\K)= s_\e (s_\e^\alpha(\K))$ and $s_\e^\alpha(\K)= \bigcap_{\beta< \alpha} s_\e^\beta(\K)$ for $\alpha$ a limit ordinal. If $s_\e^\alpha(\K)= \emptyset$ for some $\alpha$, one lets ${\rm Sz}(\K,\e)$ be the least such an ordinal $\alpha$; otherwise, ${\rm Sz}(\K,\e)= \infty$. Finally, the \emph{Szlenk index} of $\K$ is ${\rm Sz}(\K)\coloneqq \sup_{\e>0} {\rm Sz}(\K,\e)$ and the Szlenk index of $\X$ is ${\rm Sz}(\X)\coloneqq {\rm Sz}(B_{\X^*})$. Clearly, if $\mathcal{L}\subseteq \K$ are $w^*$-compact $s_\e^\alpha(\mathcal{L})\subseteq s_\e^\alpha(\K)$; thus ${\rm Sz}(\mathcal{L})\leq {\rm Sz}(\K)$. Moreover, it is known that ${\rm Sz}(\X)< \infty$ if and only if $\X$ is an Asplund space, \cite[Theorem I.5.2]{DGZ}. Therefore, admitting countable Szlenk index is stronger than being Asplund (notice that for an Asplund space $\X$ of density $\kappa$ one has ${\rm Sz}(\X)< \kappa^+$, \cite[Lemma 2.41]{HMVZ}). For further information on the Szlenk index we refer to \cite[Chapter 2]{HMVZ}, \cite{Lancien}, or Causey's work, \emph{e.g.}, \cite{Causey1, Causey2, Causey3, Causey4, Causey5}. \smallskip

The main ingredient in the proof is the following generalisation of a result due to Deville and Godefroy \cite{DG} that we will also mention several times in the next section. Further generalisations of Deville--Godefroy's result can be found in Kalenda's papers \cite{Kalenda image Valdivia, Kalenda Valdivia equiv norm}, \cite[Proposition 3.12]{Kalenda survey}.

\begin{proposition}\label{prop: separated omega1} Let $\X$ be a Banach space and $\K$ be a $w^*$-compact subset of $\X^*$. If $\K$ is Valdivia and not Corson, then $\K$ contains a homeomorphic copy of $[0,\omega_1]$ that is $\e$-separated (in norm), for some $\e>0$.
\end{proposition}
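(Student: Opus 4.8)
We have a Banach space $\X$ and a $w^*$-compact subset $\K$ of $\X^*$. $\K$ is Valdivia (has a dense $\Sigma$-subset under some embedding into $[-1,1]^\Gamma$) but not Corson. We want to find a copy of $[0,\omega_1]$ inside $\K$ that is $\e$-separated in the norm of $\X^*$, for some $\e > 0$.

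**What I know.** The key facts:
- Valdivia means there's an embedding $h: \K \to [-1,1]^\Gamma$ with $h^{-1}(\Sigma(\Gamma))$ dense in $\K$.
- Corson means homeomorphic to a subset of $\Sigma(\Gamma)$ (everything countably supported).
- Not Corson means there's some point with "uncountable support" in an essential way — the prototype being $\omega_1$ in $[0,\omega_1]$, which is not the limit of a sequence.

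**The Deville-Godefroy result (the thing being generalized).** The original Deville-Godefroy result presumably says: a Valdivia non-Corson compact contains a topological copy of $[0,\omega_1]$. This is the purely topological statement. The novelty in this Proposition is the additional norm-separation: since $\K \subseteq \X^*$, it inherits the norm of $\X^*$, and we want the copy to be $\e$-separated.

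**Structure of a Valdivia non-Corson compact.** Let me think about why such a space must contain $[0,\omega_1]$.

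If $\K$ is Valdivia with $\Sigma$-subset $\Sigma(\K) = h^{-1}(\Sigma(\Gamma))$, dense in $\K$. Not Corson means $\Sigma(\K) \neq \K$ (roughly), so there's a point $x \in \K \setminus \Sigma(\K)$, i.e., $h(x)$ has uncountable support.

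Actually the topological core: A Valdivia compact that is not Corson contains $[0,\omega_1]$. The argument: Take a point $p$ not in the $\Sigma$-subset. Its support $S = \text{supp}(h(p))$ is uncountable. Since $\Sigma(\K)$ is dense, approximate $p$ by points from $\Sigma(\K)$. Build a transfinite sequence $(x_\alpha)_{\alpha < \omega_1}$ in $\Sigma(\K)$ converging (in the right sense) to $p$, such that the supports are increasing, and $p$ behaves as the "limit point $\omega_1$."

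**The $\e$-separation — the novel and hard part.** The topological copy comes from Deville-Godefroy essentially. The issue is: the copy of $[0,\omega_1]$ obtained as a set of functionals in $\X^*$ might have points clustering in NORM even though they're topologically (i.e., $w^*$) forming a copy of $[0,\omega_1]$. We need the points $x_\alpha$ to be pairwise $\e$-separated in norm.

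**My plan.**

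First, I recall/establish the topological version: $\K$ contains a homeomorphic copy of $[0,\omega_1]$. I'll use the Valdivia structure. Fix embedding $h: \K \hookrightarrow [-1,1]^\Gamma$ with $\Sigma := h^{-1}(\Sigma(\Gamma))$ dense. Since $\K$ is not Corson, $\K \neq \Sigma$ (if $\K = \Sigma$ then $\K \subseteq \Sigma(\Gamma)$, so Corson). Pick $p \in \K \setminus \Sigma$; then $S := \{\gamma : h(p)(\gamma) \neq 0\}$ is uncountable.

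Now build by transfinite recursion points $p_\alpha \in \Sigma$ ($\alpha < \omega_1$) and an increasing sequence of countable sets $\Gamma_\alpha \subseteq \Gamma$, together with ordinals, so that $p_\alpha \to p$ along this net and the map $\alpha \mapsto p_\alpha$ (with $p$ as $\omega_1$) is a homeomorphism of $[0,\omega_1]$ onto its image. The standard trick: choose $p_\alpha$ agreeing with $p$ on larger and larger countable pieces of $S$ while being supported on countable sets; the point $p$ then plays the role of $\omega_1$ — it's the unique non-$G_\delta$-ish limit.

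The main obstacle is the NORM separation. Here's where I need to think. The functionals $p_\alpha, p_\beta \in \X^*$ differ somewhere in the product $[-1,1]^\Gamma$, but "differing in a product coordinate" doesn't automatically give a norm gap, because the coordinates $\gamma \in \Gamma$ correspond to evaluation at elements $e_\gamma \in \X$ (or at least to a continuous-linear structure), and there may be no uniform lower bound $\|p_\alpha - p_\beta\| \geq \e$.

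**How to force separation.** The idea: WLOG by a limiting/pigeonhole argument, we can extract a subfamily that IS separated. For each pair, $\|p_\alpha - p_\beta\|_{\X^*} \geq |h(p_\alpha)(\gamma) - h(p_\beta)(\gamma)| / \|e_\gamma\|$ for the coordinate $\gamma$ where they differ. But $\|e_\gamma\|$ could blow up. Instead I think the right move is: when constructing the copy, arrange that consecutive points differ by a definite amount in a coordinate, then use a pressing-down / $\Delta$-system type argument to pass to an uncountable separated subfamily.

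Concretely: for each $\alpha$, there's $\gamma_\alpha \in S$ with $h(p)(\gamma_\alpha) \neq 0$ and $p_\alpha$ chosen to "switch on" that coordinate relative to $p_{\alpha'}$ for $\alpha' < \alpha$. Set $c_\alpha := h(p)(\gamma_\alpha)$. There's an uncountable set of $\alpha$'s for which $|c_\alpha| \geq \delta$ (since $S$ uncountable, uncountably many coordinates have value bounded below, by pigeonhole over the countably many "levels"). Now $\gamma_\alpha$ is a coordinate = evaluation at some fixed $x_\alpha \in B_\X$ (here I use $\K \subseteq \X^*$ and that coordinate functionals restricted to $\X^*$ are given by elements of $\X$ — this needs the embedding $h$ to be compatible with the linear structure, which is the delicate point). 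Then $\langle p_\beta - p_\alpha, x\rangle$ is large, giving norm separation.

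**Honest assessment of the hard step.** The genuinely hard/subtle part is reconciling the abstract Valdivia embedding $h$ (purely topological, into an abstract $[-1,1]^\Gamma$) with the norm on $\X^*$. The proof must relate product-coordinate differences to norm differences. I expect the real argument uses that $\K$ is $w^*$-compact in $\X^*$ to choose the embedding $h$ cleverly — e.g., via a fundamental biorthogonal-type system or a bounded family in $\X$ implementing the coordinates — OR it first gets the topological copy and then applies a separate norm-separation extraction lemma (pressing-down on which coordinate realizes a definite norm gap between a point and its predecessors). I'd structure it as: (1) get topological $[0,\omega_1] \subseteq \K$ (cite/reprove Deville–Godefroy), (2) prove a separation lemma saying any copy of $[0,\omega_1]$ in a $w^*$-compact subset of $\X^*$ contains an $\e$-separated subcopy, using a Ramsey/pressing-down argument on norm distances $\|p_\alpha - p_\beta\|$ together with the fact that $\omega_1$ is not reachable by a norm-convergent sequence (else it would be a $w^*$-limit of a sequence, contradicting the topology of $[0,\omega_1]$ at its top point). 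That last observation — the top point $\omega_1$ of the copy cannot be a norm-limit of the others, because norm convergence implies $w^*$ convergence and $\omega_1$ has no nontrivial convergent sequences below it — is what I expect powers the separation, and making it yield a uniform $\e$ via a derivation/Cantor–Bendixson-style argument on the distance function is the crux.
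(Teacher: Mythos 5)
Your high-level strategy --- obtain the topological copy of $[0,\omega_1]$ from the Deville--Godefroy theorem as a black box and then extract a norm-separated subcopy --- is exactly the structure of the paper's proof. However, the second step, which you yourself flag as ``the crux'', is not carried out, and the mechanism you propose for it is insufficient. Your key observation is that the top point $\p(\omega_1)$ cannot be a norm-limit of a sequence from the copy (since norm convergence implies $w^*$-convergence and no sequence in $[0,\omega_1)$ converges to $\omega_1$). That only yields a uniform norm gap between $\p(\omega_1)$ and the remaining points; it says nothing about pairwise separation of the points $\p(\alpha)$, $\alpha<\omega_1$, among themselves, which is what the statement requires. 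The vague appeal to a ``Ramsey/pressing-down argument on norm distances'' does not close this gap, and your alternative route via coordinates of the Valdivia embedding runs into the obstruction you yourself identify (the abstract embedding $h$ need not be implemented by vectors of $\X$), so it is not a viable repair either.

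The paper's actual separation argument is local at every $\alpha$ and therefore does give pairwise separation: for each $\alpha<\omega_1$ the disjoint $w^*$-compact sets $\p([0,\alpha])$ and $\p([\alpha+1,\omega_1])$ are at positive norm distance $d_\alpha$ --- otherwise one takes $p_n$ in the first set and $q_n$ in the second with $\|p_n-q_n\|<1/n$, extracts a $w^*$-convergent subsequence of $(p_n)$ using that $\p([0,\alpha])$ is compact metrizable (as $\alpha$ is countable), and obtains a common $w^*$-limit lying in both disjoint closed sets, a contradiction. A pigeonhole then produces an uncountable $\Lambda\subseteq[0,\omega_1]$ and $\e>0$ with $d_\alpha\geq\e$ for all $\alpha\in\Lambda$; after replacing $\p$ by its restriction to $\overline{\Lambda}$, which is again homeomorphic to $[0,\omega_1]$, one may assume every successor ordinal lies in $\Lambda$, and then any two points $\p(\alpha)$, $\p(\beta)$ with a successor ordinal $\gamma$ satisfying $\alpha\leq\gamma<\beta$ obey $\|\p(\alpha)-\p(\beta)\|\geq d_\gamma\geq\e$. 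Discarding the successors of limit ordinals yields the $\e$-separated copy. To complete your proof you would need a quantity of this kind attached to each $\alpha$ that simultaneously controls all pairs straddling $\alpha$, rather than a single separation statement at the top point.
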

Notice that the assumption that $\K\subseteq (\X^*,w^*)$ does not affect the gene\-ra\-lity, as every compact set $\K$ is homeomorphic to a $w^*$-compact subset of $\C(\K)^*$.

\begin{proof} By the result of Deville and Godefroy \cite{DG} there exists a homeomorphic embedding $\p\colon [0,\omega_1]\to (\K,w^*)$. We first notice that the (disjoint) $w^*$-compact sets $\p([0,\alpha])$ and $\p([\alpha+1,\omega_1])$ have positive distance (in the norm of $\X^*$), for all $\alpha<\omega_1$. Indeed, if the distance were $0$, there would be sequences $(p_n)_{n=1}^\infty$ in $\p([0,\alpha])$ and $(q_n)_{n=1}^\infty$ in $\p([\alpha+1,\omega_1])$ with $\|p_n- q_n\|< 1/n$. Up to a subsequence, we can assume that $p_n$ is $w^*$-convergent to some $p\in \p([0,\alpha])$. Thus $q_n$ $w^*$-converges to $p$ as well, which gives the contradiction that $p\in \p([\alpha+1,\omega_1])$.

Therefore, for $\alpha< \omega_1$ we can define
\[ d_\alpha\coloneqq {\rm dist} \big( \p([0,\alpha]), \p([\alpha+1,\omega_1]) \big)>0. \]
Hence there are an uncountable subset $\Lambda$ of $[0,\omega_1]$ and $\e>0$ such that $d_\alpha \geq\e$ for all $\alpha\in \Lambda$. Moreover, $\overline{\Lambda}$ is homeomorphic to $[0,\omega_1]$ and $\Lambda$ is obviously dense in $\overline{\Lambda}$. Thus, up to replacing $\p$ with $\p\cut_{\overline{\Lambda}}$ we can assume that $\Lambda$ is dense in $[0,\omega_1]$. In particular, all successor ordinals belong to $\Lambda$. Therefore, if $\alpha<\beta\in [0,\omega_1]$ are such that there is a successor ordinal $\gamma$ with $\alpha\leq \gamma<\beta$, then $\p(\alpha)\in \p([0,\gamma])$ and $\p(\beta)\in \p([\gamma+1,\omega_1])$, whence $\|\p(\alpha)- \p(\beta)\|\geq d_\gamma\geq \e$. In other words, if $\Gamma\coloneqq [0,\omega_1]\setminus \{\alpha+1\colon \alpha \mbox{ limit ordinal}\}$, then $\Gamma$ is homeomorphic to $[0,\omega_1]$ and $\p(\Gamma)$ is $\e$-separated, as desired.
\end{proof}

\begin{remark} The above proof actually shows that if $[0,\omega_1]\subseteq \X^*$, then there is a $w^*$-compact subset $\mathcal{L}$ of $[0,\omega_1]$ that is homeomorphic to $[0,\omega_1]$ and $\e$-separated. However, this also follows from the statement of the proposition by choosing $\K= [0,\omega_1]$. 
\end{remark}

We are now ready for the proof of the claimed result. As it turns out, we can actually weaken the assumption on the norming M-basis to the one that $\X$ is Plichko.
\begin{theorem}\label{thm: countable Sz} Every Plichko space with countable Szlenk index is WCG.
\end{theorem}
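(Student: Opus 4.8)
The plan is to prove that $\X$ is WLD and then invoke Theorem \ref{thm: shrinking M-basis}: since having countable Szlenk index is in particular stronger than being Asplund, once we know that $\X$ is WLD the equivalence of conditions \eqref{item: WLD} and \eqref{item: WCG} in that theorem immediately yields that $\X$ is WCG. Thus the entire content of the argument is to show that $(B_{\X^*},w^*)$ is Corson.

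First I would reduce to the $1$-Plichko case. Let $\{e_\alpha;\p_\alpha\}_{\alpha\in\Gamma}$ be a countably $\lambda$-norming M-basis and let $\Z$ be the ($\lambda$-norming) subspace of countably supported functionals. Renorming $\X$ with $\n_\Z$, the subspace $\Z$ becomes $1$-norming; since the support of a functional depends only on the biorthogonal system and not on the norm, $\{e_\alpha;\p_\alpha\}$ is still an M-basis with the same countably supported functionals, now $1$-norming. Hence $(\X,\n_\Z)$ is $1$-Plichko. As both being WCG and having countable Szlenk index are isomorphic invariants, I may and do assume that $\X$ is already $1$-Plichko; in particular $(B_{\X^*},w^*)$ is Valdivia, as recalled after \eqref{eq: norming in dual ball}.

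Now comes the heart of the matter. Suppose, towards a contradiction, that $(B_{\X^*},w^*)$ is not Corson. Being Valdivia, Proposition \ref{prop: separated omega1} provides a homeomorphic copy $\mathcal{L}\cong[0,\omega_1]$ inside $(B_{\X^*},w^*)$ that is $\e$-separated in the norm of $\X^*$, for some $\e>0$. The key observation is that on such an $\e$-separated $w^*$-compact set the Szlenk derivation $s_\e$ coincides with the Cantor--Bendixson derivation: because any two distinct points of $\mathcal{L}$ are at distance at least $\e$, a relatively $w^*$-open subset of $\mathcal{L}$ has diameter less than $\e$ precisely when it is a relatively isolated singleton, so $s_\e$ simply deletes the isolated points of $\mathcal{L}$. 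By transfinite induction $s_\e^\beta(\mathcal{L})$ equals the $\beta$-th Cantor--Bendixson derivative of $\mathcal{L}$. Since the point corresponding to $\omega_1$ survives every countable derivation --- the intersection of countably many club subsets of $\omega_1$ is again club --- we get $s_\e^\beta(\mathcal{L})\neq\emptyset$ for all $\beta<\omega_1$. By the monotonicity $s_\e^\beta(\mathcal{L})\subseteq s_\e^\beta(B_{\X^*})$ recalled before the statement, this forces ${\rm Sz}(B_{\X^*},\e)\geq\omega_1$, whence ${\rm Sz}(\X)\geq\omega_1$ is uncountable, a contradiction.

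Therefore $(B_{\X^*},w^*)$ is Corson, that is, $\X$ is WLD, and the reduction above concludes the proof. I expect the middle step to be the main obstacle: one must correctly match the Szlenk derivation with the Cantor--Bendixson derivation on the separated copy, and verify that $[0,\omega_1]$ has uncountable Cantor--Bendixson rank; everything else (the renorming and the appeal to Theorem \ref{thm: shrinking M-basis}) is essentially bookkeeping. A secondary point that requires care is that the separation produced by Proposition \ref{prop: separated omega1} is genuinely in the dual norm, which is exactly the metric that the Szlenk derivation measures.
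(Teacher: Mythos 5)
Your argument is correct and follows essentially the same route as the paper's proof: reduce to the $1$-Plichko case by renorming, apply Proposition \ref{prop: separated omega1} to extract an $\e$-separated copy of $[0,\omega_1]$ in the dual ball, and note that on an $\e$-separated set the Szlenk $\e$-derivation coincides with the Cantor--Bendixson derivation, contradicting countability of the Szlenk index. The only cosmetic differences are that you spell out the renorming reduction in more detail and stop the derivation at countable stages, whereas the paper runs it to $s_\e^{\omega_1}(\K)=\K^{(\omega_1)}\neq\emptyset$; both suffice.
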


\begin{proof} Being WCG and having countable Szlenk index are isomorphic properties, so there is no loss in generality in assuming that $\X$ is $1$-Plichko. Hence $(B_{\X^*},w^*)$ is Valdivia. If it is additionally Corson, then $\X$ is WLD, thus WCG by Theorem \ref{thm: shrinking M-basis}. Therefore, we assume towards a contradiction that $(B_{\X^*},w^*)$ is not Corson and we apply Proposition \ref{prop: separated omega1}. Hence, there is a $w^*$-compact set $\K\subseteq (B_{\X^*},w^*)$, homeomorphic to $[0,\omega_1]$ and that is $\e$-separated, for some $\e>0$. For an $\e$-separated set $\mathcal{L}$ it is clear that $s_\e(\mathcal{L})$ coincides with the set of accumulation points $\mathcal{L}'$ of $\mathcal{L}$; therefore, the Szlenk $\e$-derivation of $\K$ is the same as the Cantor--Bendixson one, $s_\e^\alpha(\K)= \K^{(\alpha)}$ for every ordinal $\alpha$. Thus we have 
\[ s_\e^{\omega_1}(B_{\X^*})\geq s_\e^{\omega_1}(\K)= \K^{(\omega_1)}\neq \emptyset, \]
which contradicts the assumption that $\X$ has countable Szlenk index.
\end{proof}

\section{Banach spaces of continuous functions}\label{sec: C(K)}
The motivation for this section comes from a comparison between Problem \ref{probl: John-Zizler} and Problem \ref{probl: Godefroy}. For the former we observed in Remark \ref{rmk: norming equiv for C(K)} that it suffices to consider $\C(\K)$ spaces and Theorem \ref{thm: H WCG not norming} exhibits an explicit $\C(\K)$ counterexample; for the latter, the counterexample in Theorem \ref{thm: HRST} is only a subspace of an Asplund $\C(\K)$ space. Therefore, it is natural to ask if Problem \ref{probl: Godefroy} might admit a positive answer for $\C(\K)$ spaces. This problem is still open and we will discuss it in the present section. Let us start by giving a topological reformulation to it. As we saw in Section \ref{sec: non-sep classes}, a $\C(\K)$ space is WCG if and only if $\K$ is Eberlein and it is Asplund if and only if $\K$ is scattered. Therefore, Problem \ref{probl: Godefroy} for $\C(\K)$ spaces can be restated as follows.
\begin{problem}[Godefroy's problem for $\C(\K)$ spaces]\label{probl: Godefroy for C(K)} Let $\K$ be a scattered compact space such that $\C(\K)$ admits a norming M-basis. Is $\K$ Eberlein? 
\end{problem}

Our plan for the section is the following. At first, we shall explain why consideration of the above problem naturally leads one to consider the compact space $[0,\omega_1]$. Secondly, we describe an important result due to Alexandrov and Plichko \cite{AP} that $\C([0,\omega_1])$ does not admit a norming M-basis and its recent extension \cite{RS C(K)}. Finally, we state and comment some problems whose positive answers would solve Problem \ref{probl: Godefroy for C(K)} in the positive. \smallskip

There is an obvious reason why one might think of $[0,\omega_1]$ when considering Problem \ref{probl: Godefroy for C(K)}, namely the fact that $[0,\omega_1]$ is a rather simple and well studied example of a scattered compact space that is not Eberlein. However, there is much more than that. Consider the particular case of Problem \ref{probl: Godefroy for C(K)} when the M-basis is actually $1$-norming. Then the evaluation map $\ev$ from \eqref{eq: ev map} shows that $(B_{\C(\K)^*}, w^*)$ is Valdivia, thus $\K$ is Valdivia as well, because of \cite[Theorem 5.3]{Kalenda survey} and the fact that $\K$ has a dense set of $G_\delta$ points (as it is scattered).

Now the main point comes, due to the following important results. First, Deville and Godefroy \cite{DG} proved the elegant characterisation that a Valdivia compactum $\K$ is Corson if and only if it does not contain $[0,\omega_1]$. Next, by a result of Alster \cite{Alster} scattered Corson compacta are Eberlein. Therefore scattered Valdivia compacta that do not contain $[0,\omega_1]$ are Eberlein and one only has to consider these $\K$ that contain $[0,\omega_1]$. For this reason, $[0,\omega_1]$ could be considered to be a good test case for Problem \ref{probl: Godefroy for C(K)}. \smallskip

Therefore, we shall now focus on the Banach space $\C([0,\omega_1])$. Before we discuss the answer to Problem \ref{probl: Godefroy for C(K)} for this particular case, let us briefly mention the existence of a canonical M-basis in the space.

\begin{example}\label{ex: canonical M-basis in [0,omega1]} For notational convenience, we work in the space $\{f\in \C([0,\omega_1])\colon f(0)=0\}$ (which is clearly isometric to $\C([0,\omega_1])$). For $\alpha< \omega_1$, consider
\[ f_\alpha\coloneqq \bone_{(\alpha,\omega_1]} \qquad\mbox{and}\qquad \mu_\alpha\coloneqq \delta_{\alpha+1}- \delta_\alpha \]
(noting that actually $\mu_0=\delta_1$). It is easy to realise that the system is biorthogonal; moreover, the linear span of the functions $\{f_\alpha\}_{\alpha< \omega_1}$ is dense because of the Stone--Weiestrass theorem. Finally, if $\langle \mu_\alpha,f\rangle=0$ for all $\alpha<\omega_1$, the continuity of $f$ implies that it must be constant; hence $f(0)=0$ forces $f=0$. Thus $\{f_\alpha; \mu_\alpha\}_{\alpha<\omega_1}$ is an M-basis.

Sometimes it is convenient to consider a slightly different way to express the above M-basis. Consider the hyperplane
\[ \C_0([0,\omega_1])\coloneqq \{f\in \C([0,\omega_1])\colon f(\omega_1)=0\}, \]
which is clearly isomorphic to $\C([0,\omega_1])$, and the biorthogonal system
\[ f_\alpha\coloneqq \bone_{[0,\alpha]} \qquad\mbox{and}\qquad \mu_\alpha\coloneqq \delta_\alpha- \delta_{\alpha+1} \qquad(\alpha<\omega_1). \]
The same argument as above shows that this system is an M-basis for $\C_0([0,\omega_1])$; in fact, the canonical isomorphism between $\{f\in \C([0,\omega_1])\colon f(0)=0\}$ and $\C_0([0,\omega_1])$ maps one M-basis onto the other.
\end{example}

\begin{remark}\label{rmk: C(omega1) Plichko} Let us also mention that the M-basis we constructed in $\C([0,\omega_1])$ is countably $1$-norming. In fact, the set
\[ S\coloneqq\{\mu\in \C([0,\omega_1])^*\colon \{ \alpha<\omega_1\colon \langle\mu, f_\alpha\rangle\neq 0 \} \mbox{ is countable} \} \]
of countably supported functionals coincides with $\{\mu\in \C([0,\omega_1])^*\colon \mu(\{\omega_1\})=0 \}$, as it is easy to see. However, $S$ contains all the Dirac measures $\delta_\alpha$, $\alpha<\omega_1$, hence it is $1$-norming.

On the other hand, a similar argument only shows that the above system in $\C_0([0,\omega_1])$ is countably $1/2$-norming and Kalenda \cite{Kalenda renorm Valdivia} proved that $\C_0([0,\omega_1])$ does not admit any countably $1$-norming M-basis.
\end{remark}

Let us now pass to the existence of norming M-bases in $\C([0,\omega_1])$, that was answered negatively by Alexandrov and Plichko in \cite{AP}.
\begin{theorem}[\cite{AP}]\label{thm: AP} The Banach space $\C([0,\omega_1])$ does not admit a norming M-basis.
\end{theorem}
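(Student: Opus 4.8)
The plan is to argue by contradiction, exploiting the special role of the non-isolated point $\omega_1$ together with the fact that every $f\in\C([0,\omega_1])$ is eventually constant (equal to $f(\omega_1)$ on a tail $[\beta,\omega_1]$, some $\beta<\omega_1$). First I would record the structural facts on which everything rests. Since $[0,\omega_1]$ is scattered, every Radon measure on it is purely atomic, so $\C([0,\omega_1])^*$ is isometric to $\ell_1([0,\omega_1])$, with $\mu=\sum_{\alpha\leq\omega_1}\mu(\{\alpha\})\delta_\alpha$ and $\|\mu\|=\sum_\alpha|\mu(\{\alpha\})|$. Because each $f$ is eventually constant, its norm $\|f\|=\max_{\alpha\leq\omega_1}|f(\alpha)|$ is actually attained at some $\alpha<\omega_1$; hence the $2$-separated family $\{\delta_\alpha\}_{\alpha<\omega_1}$ is already $1$-norming. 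The theorem therefore asserts something finer: that no matter how one chooses an M-basis, finite combinations of its biorthogonal functionals cannot norm. This is exactly what separates ``norming'' from ``countably $1$-norming'', the latter holding here by Remark \ref{rmk: C(omega1) Plichko}, since the countably supported functionals include all of the $\delta_\alpha$.

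Suppose then that $\{e_\gamma;\mu_\gamma\}_{\gamma\in\Gamma}$ is a $\lambda$-norming M-basis; note $|\Gamma|=\omega_1$. Each $\mu_\gamma\in\ell_1([0,\omega_1])$, and every finite combination $\psi\in{\rm span}\{\mu_\gamma\}$ is countably supported, so its below-$\omega_1$ support is bounded by some $\sigma(\psi)<\omega_1$, while the atom $\psi(\{\omega_1\})$ records the part that ``sees'' $\omega_1$. I would then run a transfinite closing-off construction, building a continuous strictly increasing sequence $(\alpha_\xi)_{\xi<\omega_1}$ together with witnesses $\psi_\xi\in{\rm span}\{\mu_\gamma\}$, $\|\psi_\xi\|\leq1$, chosen (after a sign change) so that $\psi_\xi$ norms the tail function $\bone_{(\alpha_\xi,\omega_1]}$, i.e. $\psi_\xi((\alpha_\xi,\omega_1])\geq\lambda$, and with $\alpha_{\xi+1}>\sigma(\psi_\xi)$. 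This forces the below-$\omega_1$ tail mass of $\psi_\xi$ to live on the interval $(\alpha_\xi,\alpha_{\xi+1})$, which are disjoint across $\xi$, yielding
\[ \psi_\xi\big((\alpha_\xi,\alpha_{\xi+1})\big)+\psi_\xi(\{\omega_1\})\geq\lambda \qquad(\xi<\omega_1). \]
A pigeonhole (pressing-down) step on the bounded atoms $\psi_\xi(\{\omega_1\})\in[-1,1]$ isolates an uncountable $\Xi\subseteq\omega_1$ on which they are nearly constant, splitting matters into the case where the bulk of the mass sits on the disjoint intervals and the case where it accumulates at the single atom $\omega_1$.

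The crux — and what I expect to be the main obstacle — is turning this configuration into a genuine contradiction, since the $\psi_\xi$ are distinct norm-one functionals that cannot simply be summed. The mechanism must exploit the asymmetry between functions and functionals on $[0,\omega_1]$: a functional can spread bounded mass across cofinally many of the disjoint intervals, whereas a continuous function, being eventually constant, interacts non-trivially with only a bounded portion of that spread, while the shared atom at $\omega_1$ is limited by the unit-ball budget $|\psi_\xi(\{\omega_1\})|\leq1$. Concretely, I would extract a $w^*$-cluster point $\Psi$ of $(\psi_\xi)_{\xi\in\Xi}$ and use that the below-$\alpha_\xi$ mass of $\psi_\xi$ is at most $1-\lambda$ and escapes every initial segment (so $|\langle\Psi,\bone_{[0,\beta]}\rangle|\leq 1-\lambda$ for all $\beta<\omega_1$) to force $\Psi$ to concentrate at $\omega_1$ with weight incompatible with $\|\Psi\|\leq1$; alternatively one diagonalizes at a limit stage to manufacture a single unit function whose maximum is approached only through the disjoint intervals and which no admissible $\psi\in{\rm span}\{\mu_\gamma\}$ can value by as much as $\lambda$.

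Either route demands careful bookkeeping of the atom at $\omega_1$ and of the disjointness of the supports, and this quantitative aggregation of uncountably many witnesses is the delicate heart of the Alexandrov--Plichko argument. As a sanity check, the comparison with $\ell_1(\Gamma)$ — which does admit a $1$-norming M-basis despite being a non-WLD Plichko space — shows that any correct proof must use the topology of the limit point $\omega_1$ (its failure to be a sequential limit, and the ensuing eventual constancy of continuous functions) in an essential way, rather than merely the non-Corson structure of the dual ball.
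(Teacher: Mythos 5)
Your structural setup is sound (the identification $\C([0,\omega_1])^*\cong\ell_1([0,\omega_1])$, the countable and hence bounded-below-$\omega_1$ supports of elements of ${\rm span}\{\mu_\gamma\}$, the closing-off construction producing disjoint intervals $(\alpha_\xi,\alpha_{\xi+1})$, the pigeonhole on the atoms at $\omega_1$), and stabilisation arguments of exactly this kind do occur in the real proof. But the argument has a fatal gap at the point you yourself flag as the crux: the configuration you derive carries no contradiction, because your witnesses $\psi_\xi$ are chosen only to norm the indicator functions $\bone_{(\alpha_\xi,\omega_1]}$. The canonical M-basis of Example \ref{ex: canonical M-basis in [0,omega1]}, $f_\alpha=\bone_{(\alpha,\omega_1]}$ with $\mu_\alpha=\delta_{\alpha+1}-\delta_\alpha$, realises your configuration exactly: taking $\psi_\xi=\tfrac12\mu_{\alpha_\xi}$ gives $\|\psi_\xi\|=1$, $\psi_\xi(\{\omega_1\})=0$, and $\psi_\xi\big((\alpha_\xi,\alpha_{\xi+1})\big)=\tfrac12$ on disjoint intervals. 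Since this is a legitimate M-basis of $\C([0,\omega_1])$, no argument proceeding only from that configuration can reach a contradiction. Concretely, both of your finishing moves fail: a $w^*$-cluster point $\Psi$ of the $\psi_\xi$ can perfectly well be a scalar multiple of $\delta_{\omega_1}$ (point masses on the moving intervals converge $w^*$ to $\delta_{\omega_1}$), and the conclusion $|\langle\Psi,\bone_{[0,\beta]}\rangle|\leq 1-\lambda$ for all $\beta<\omega_1$ is satisfied by $\Psi=\delta_{\omega_1}$ with $\|\Psi\|=1$; nothing is ``incompatible with the unit-ball budget''.

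The missing idea is that one must test the norming inequality against functions that superpose many levels simultaneously, not against indicators of single intervals. This is exactly what the paper's ``ziqqurat'' function \eqref{eq: ziqqurat} does: $z_N$ climbs through $2N$ levels of height $1/N$, each jump functional $\mu_{\omega\cdot k}$ sees only one level, and so $|\langle\mu,z_N\rangle|\leq\|\mu\|/N$ for every $\mu$ in the span --- which kills norming for the canonical basis for every $\lambda>0$. Your second alternative (``diagonalize to manufacture a single unit function whose maximum is approached only through the disjoint intervals'') is the right instinct and is precisely where the difficulty of the Alexandrov--Plichko theorem lives: one must reproduce a $z_N$-like function adapted to an \emph{arbitrary} M-basis via several reductions and stabilisation arguments, and this construction is not supplied in your proposal (nor, for that matter, in the survey, which proves only the special case of the canonical basis and refers to the original paper for the general reduction). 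As written, your proof does not go through.
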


\begin{remark}\label{rmk: long Schauder not norming} It is not hard to check explicitly that the M-basis from Example \ref{ex: canonical M-basis in [0,omega1]} is even a long Schauder basis, see, \emph{e.g.}, \cite[Proposition 4.8]{HMVZ}. Therefore, unlike the separable case, long Schauder bases might fail to be norming. An analogue example in $\C([0,\omega_2])$ shows that long Schauder bases might also fail to be countably norming, \cite{Kalenda omega2} (see the discussion concerning $\C([0,\omega_2])$ at the end of Section \ref{sec: subspace}).
\end{remark}

\begin{remark}\label{rmk: strong not norming} By definition, every long Schauder basis is a strong M-basis, therefore we have the existence of a Banach space with strong M-basis, but no norming M-basis. On the other hand, every Banach space with norming M-basis admits a strong M-basis, \cite[Theorem 1]{AP}; this is actually true more generally for every Plichko space and even for spaces with a projectional skeleton \cite[Theorem 1.2]{Kalenda PLMS} and it is a standard transfinite induction argument from Terenzi's result \cite{Terenzi} for separable spaces.
\end{remark}

Instead of giving the full proof of Theorem \ref{thm: AP}, we will follow \cite{Alexandrov} and prove that the analogue in $\C([0,\eta])$ of the M-basis from Example \ref{ex: canonical M-basis in [0,omega1]} is not norming provided that $\eta\geq \omega^2$. The idea is to consider a certain `ziqqurat like' function, see \eqref{eq: ziqqurat} below, which is also present in \cite{AP}, \cite[Theorem 5.25]{HMVZ}, \cite{Kalenda omega2}, and \cite{RS C(K)}. The proof of Theorem \ref{thm: AP} then consists of several reductions and stabilisation arguments to show that one can reproduce a function as \eqref{eq: ziqqurat} starting from any M-basis of $\C([0,\omega_1])$.

\begin{proposition}[\cite{Alexandrov}] The M-basis $\{f_\alpha; \mu_\alpha\}_{\alpha< \eta}$ of $\C_0([0,\eta])$ given by
\[ f_\alpha\coloneqq \bone_{[0,\alpha]} \qquad\mbox{and}\qquad \mu_\alpha\coloneqq \delta_\alpha- \delta_{\alpha+1} \]
is not norming when $\eta\geq \omega^2$.
\end{proposition}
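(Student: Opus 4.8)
The plan is to show that the M-basis is not $\lambda$-norming for any $\lambda\in(0,1]$; equivalently, for each fixed $N\in\N$ I would produce a single function $f\in\C_0([0,\eta])$ with $\|f\|_\infty=1$ such that $|\langle\mu,f\rangle|\leq \tfrac1N$ for \emph{every} $\mu\in{\rm span}\{\mu_\alpha\}$ with $\|\mu\|\leq1$. Since $N$ is arbitrary, this drives the norming constant to $0$. Throughout I identify an element of ${\rm span}\{\mu_\alpha\}$ with a finitely supported measure $\mu=\sum_\beta d_\beta\delta_\beta$ (the $\beta<\eta$), recalling the standard fact that for such measures $\|\mu\|=\sum_\beta|d_\beta|$ and $\langle\mu,f\rangle=\sum_\beta d_\beta f(\beta)$.

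The crucial first step is a structural observation about the test functionals. I would partition the ordinals below $\eta$ into the \emph{$\omega$-blocks} $I_\lambda:=[\lambda,\lambda+\omega)$, where $\lambda$ ranges over $0$ and the limit ordinals. The key point is that $\alpha$ and $\alpha+1$ always lie in the same block, since passing from $\alpha$ to $\alpha+1$ never crosses a limit ordinal. Consequently each generator $\mu_\alpha=\delta_\alpha-\delta_{\alpha+1}$ has total mass $0$ on its own block and no mass on any other; hence every $\mu=\sum_\beta d_\beta\delta_\beta$ in ${\rm span}\{\mu_\alpha\}$ satisfies the cancellation property $\sum_{\beta\in I_\lambda}d_\beta=0$ for each $\lambda$. (Only this one direction is needed for the estimate below.)

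Next I would build the \emph{ziggurat} function. Fix values $c_0=0,c_1,c_2,\dots$ climbing from $0$ up to $1$ in steps of size $1/N$ and then descending back to $0$, so that $|c_{k+1}-c_k|\leq\tfrac1N$, $\sup_k c_k=1$, and $c_k=0$ for all large $k$. I set $f(\omega\!\cdot\!k):=c_k$ and interpolate monotonically across each block $I_{\omega\cdot k}$ from $c_k$ towards $c_{k+1}$, for instance by $f(\omega\!\cdot\!k+n):=c_k+(c_{k+1}-c_k)(1-2^{-n})$, and finally set $f\equiv0$ on $[\omega^2,\eta]$. A routine verification gives that $f$ is continuous (in particular at every limit ordinal, where the left limit $c_{k}$ matches, and at $\omega^2$, where all nearby values vanish), that $f(\eta)=0$ and $\|f\|_\infty=1$, and — the decisive estimate — that on each block $\sup_{\beta\in I_\lambda}|f(\beta)-f(\lambda)|\leq \tfrac1N$.

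It then remains only to combine the two ingredients. Writing $c(\lambda):=f(\lambda)$ and using the per-block cancellation to subtract this constant on each block, I obtain
\[ |\langle\mu,f\rangle|=\Big|\sum_\lambda\sum_{\beta\in I_\lambda}d_\beta\big(f(\beta)-c(\lambda)\big)\Big|\leq \frac1N\sum_\beta|d_\beta|=\frac1N\|\mu\|, \]
which is exactly the desired bound. The place where the hypothesis $\eta\geq\omega^2$ is genuinely used is the availability of \emph{infinitely many} blocks: to climb from $0$ to $1$ and back in increments of size $1/N$ one needs on the order of $N$ limit ordinals, and if only finitely many limit ordinals lay below $\eta$ then any unit-norm function would be forced to oscillate by a definite amount on some block, keeping the norming constant bounded away from $0$. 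The main obstacle — and really the whole content of the argument — is the block-cancellation observation of the second step; once it is isolated, the ziggurat construction and the final estimate are entirely routine.
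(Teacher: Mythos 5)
Your proposal is correct and follows essentially the same route as the paper: the test function is the same ziqqurat climbing from $0$ to $1$ and back in steps of $1/N$ over the first $2N$ intervals of the form $[\omega\cdot k,\omega\cdot(k+1))$, and the hypothesis $\eta\geq\omega^2$ enters for the same reason, namely to have room for $2N$ such blocks for every $N$. Your per-block cancellation $\sum_{\beta\in I_\lambda}d_\beta=0$ is just a repackaging of the paper's observation that the only generator charging the limit ordinal $\omega\cdot k$ is $\mu_{\omega\cdot k}$ itself, so that $c_{\omega\cdot k}=\mu(\{\omega\cdot k\})$ and $\sum_k|c_{\omega\cdot k}|\leq\|\mu\|$; both devices yield the identical bound $|\langle\mu,f\rangle|\leq\frac1N\|\mu\|$.
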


\begin{proof}
For $N\in \N$ consider the function (see Figure \ref{pic: ziqqurat z4})

\begin{equation}\label{eq: ziqqurat}
    z_N\coloneqq \sum_{k=1}^N \frac{k}{N} \bone_{(\omega\cdot k,\omega\cdot (k+1)]} + \sum_{k=N+1}^{2N-1} \frac{2N-k}{N} \bone_{(\omega\cdot k,\omega\cdot (k+1)]}.
\end{equation}

\begin{figure}[!h]\begin{center}
\begin{tikzpicture}
    \draw[->] (0,0) -- (9,0);
    \draw[->] (0,0) -- (0,2.5);

    \draw[thick] (0,0) -- (1,0);
    \draw[thick] (1,0.5) -- (2,0.5);
    \draw[thick] (2,1) -- (3,1);
    \draw[thick] (3,1.5) -- (4,1.5);
    \draw[thick] (4,2) -- (5,2);
    \draw[thick] (5,1.5) -- (6,1.5);
    \draw[thick] (6,1) -- (7,1);
    \draw[thick] (7,0.5) -- (8,0.5);
    \draw[thick] (8,0) -- (9,0);

    \draw[dotted] (1,0) -- (1,0.5);
    \draw[dotted] (2,0) -- (2,1);
    \draw[dotted] (3,0) -- (3,1.5);
    \draw[dotted] (4,0) -- (4,2);
    \draw[dotted] (5,0) -- (5,2);
    \draw[dotted] (6,0) -- (6,1.5);
    \draw[dotted] (7,0) -- (7,1);
    \draw[dotted] (8,0) -- (8,0.5);

    \node at (1,-0.28) {\small $\omega$};
    \node at (2,-0.28) {\small $2\omega$};
    \node at (3,-0.28) {\small $3\omega$};
    \node at (4,-0.28) {\small $4\omega$};
    \node at (5,-0.28) {\small $5\omega$};
    \node at (6,-0.28) {\small $6\omega$};
    \node at (7,-0.28) {\small $7\omega$};
    \node at (8,-0.28) {\small $8\omega$};

    \node at (-0.5,0.5) {\small $1/4$};
    \node at (-0.5,1) {\small $1/2$};
    \node at (-0.5,1.5) {\small $3/4$};
    \node at (-0.28,2) {\small $1$};

    \fill[black] (1,0) circle (2pt);
    \fill[black] (2,0.5) circle (2pt);
    \fill[black] (3,1) circle (2pt);
    \fill[black] (4,1.5) circle (2pt);
    \fill[black] (5,2) circle (2pt);
    \fill[black] (6,1.5) circle (2pt);
    \fill[black] (7,1) circle (2pt);
    \fill[black] (8,0.5) circle (2pt);

    \draw (1,0.5) circle (2pt);
    \draw (2,1) circle (2pt);
    \draw (3,1.5) circle (2pt);
    \draw (4,2) circle (2pt);
    \draw (5,1.5) circle (2pt);
    \draw (6,1) circle (2pt);
    \draw (7,0.5) circle (2pt);
    \draw (8,0) circle (2pt);

    \foreach \x in {0, 1, 2, 3, 4, 5, 6, 7, 8} \draw (\x cm,1pt) -- (\x cm,-1pt);
    \foreach \y in {0, 0.5, 1, 1.5, 2} \draw (1pt,\y cm) -- (-1pt,\y cm);
\end{tikzpicture}
\caption{The function $z_N$ for $N=4$.}
\label{pic: ziqqurat z4}
\end{center}\end{figure}
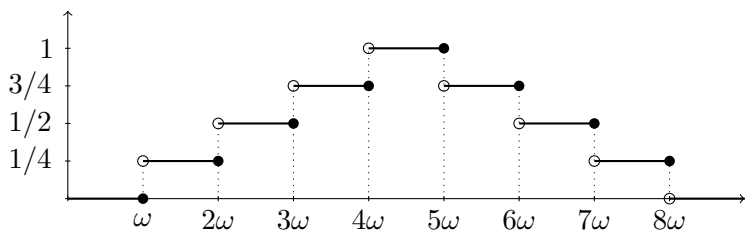

Notice that when $\alpha\notin\{\omega,\dots,\omega\cdot 2N\}$ we have $\langle\mu_\alpha, z_N\rangle=0$, while $\langle\mu_\alpha, z_N\rangle= -1/N$ for $\alpha= \omega,\dots,\omega\cdot N$ and $\langle\mu_\alpha, z_N\rangle= 1/N$ for $\alpha= \omega\cdot (N+1),\dots,\omega\cdot 2N$. Therefore, given any measure $\mu= \sum_{\alpha<\eta} c_\alpha\mu_\alpha$ in ${\rm span}\{\mu_\alpha\}_{\alpha<\eta}$ (where we understand that only finitely many scalars $c_\alpha$ are non-zero) we have
\begin{align*}
    |\langle\mu, z_N\rangle|\leq& \sum_{k=1}^{2N} \frac{1}{N} |c_{\omega\cdot k}|= \frac{1}{N}\sum_{k=1}^{2N} |c_{\omega\cdot k}|\cdot \mu_{\omega\cdot k} (\{\omega\cdot k\})\\
    =& \frac{1}{N}\sum_{k=1}^{2N} |\mu(\{\omega\cdot k\})|\leq \frac{1}{N} \|\mu\|.
\end{align*}
Since $N$ was arbitrary, ${\rm span}\{\mu_\alpha\}_{\alpha<\eta}$ is not norming.
\end{proof}

With the result for $\C([0,\omega_1])$ at our disposal, it is then natural to try to adapt the same type of argument and attack Problem \ref{probl: Godefroy for C(K)} for a general compact $\K$. This attempt was undertook in \cite{RS C(K)}, whose outcome briefly is the following: Alexandrov--Plichko's method cannot solve the problem in full generality, but it does give an answer in some interesting particular cases. The main result from \cite{RS C(K)} is the following generalisation of Theorem \ref{thm: AP}.

\begin{theorem}[\cite{RS C(K)}]\label{thm: AP-RS} $\C([0,\omega_1])$ does not embed in a Banach space with a norming M-basis.

In particular, $\C(\K)$ does not admit a norming M-basis, whenever $[0,\omega_1]$ is a continuous image of $\K$.
\end{theorem}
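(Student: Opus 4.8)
The ``in particular'' clause is the easy reduction, so I would dispose of it first. If $q\colon \K\to[0,\omega_1]$ is a continuous surjection, then the composition operator $f\mapsto f\circ q$ is an isometric embedding of $\C([0,\omega_1])$ into $\C(\K)$ (isometric precisely because $q$ is onto). Hence a norming M-basis in $\C(\K)$ would exhibit $\C([0,\omega_1])$ as a subspace of a Banach space with a norming M-basis, which the first assertion forbids. So the plan is to prove only the first statement and deduce the second.

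For the main statement I would argue by contradiction. Suppose $Y\coloneqq\C([0,\omega_1])\subseteq \X$ and $\{e_\alpha;\p_\alpha\}_{\alpha\in\Gamma}$ is a $\lambda$-norming M-basis of $\X$. Restricting the biorthogonal functionals to $Y$ produces measures $\psi_\alpha\coloneqq \p_\alpha\cut_Y\in\mathcal{M}([0,\omega_1])$, and since $\|\p\|\leq 1$ forces $\|\p\cut_Y\|\leq1$, the subspace $W\coloneqq{\rm span}\{\psi_\alpha\}_{\alpha\in\Gamma}$ is again $\lambda$-norming, now for $Y$ (the supremum defining the norming constant on $Y$ only grows when the constraint $\|\p\|\le1$ is relaxed to $\|\p\cut_Y\|\le 1$). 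Because $[0,\omega_1]$ is scattered, $\mathcal{M}([0,\omega_1])$ consists of purely atomic measures with countable support, so each $\psi_\alpha=\sum_\gamma c_{\alpha,\gamma}\delta_\gamma$ is determined by a countable set of atoms.

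The heart of the proof is to manufacture, out of the merely $\lambda$-norming family $W$, a contradiction of ziqqurat type. The Proposition above shows that for the canonical difference functionals $\delta_\alpha-\delta_{\alpha+1}$ the function $z_N$ of \eqref{eq: ziqqurat} defeats the norming inequality, since the pairing $\langle\mu,z_N\rangle$ telescopes and is dominated by $\tfrac1N\|\mu\|$. The obstacle here is precisely that a general member of $W$ need not be a finite difference and may smear its countable mass across all of $[0,\omega_1]$, so no telescoping is available a priori and a single ziqqurat is detected by any functional concentrating on its top plateau. To recover a usable structure I would run the Alexandrov--Plichko reductions and stabilisation: exploiting that each $\psi_\alpha$ is countably supported and that every $f\in Y$ is eventually constant, build by transfinite recursion an increasing block structure of limit ordinals $\gamma_1<\gamma_2<\cdots$ that is ``free'' for the family, in the sense that no norm-one combination $\sum c_\alpha\psi_\alpha$ can place more than a vanishing fraction of its mass on the successive plateaus. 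Erecting the analogue of $z_N$ on these anchors then forces $\sup\{|\langle\psi,z_N\rangle|\colon \psi\in W,\ \|\psi\|\leq1\}\leq C/N$, and taking $N>C/\lambda$ contradicts the $\lambda$-norming property of $W$.

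I expect the combinatorial selection of the block structure to be the main difficulty: unlike the situation of Theorem \ref{thm: AP}, where the difference functionals hand the telescoping to us for free, the generality of $W$ requires a careful pressing-down argument to neutralise the spread of the atoms, together with the stabilisation that localises the analysis to a well-behaved cofinal piece of $[0,\omega_1]$. It is worth stressing that the tempting shortcut of transferring a norming M-basis from $\X$ directly to $Y$ and then invoking Theorem \ref{thm: AP} is not available, since norming M-bases are not known to pass to subspaces (this is exactly the heredity problem of Section \ref{sec: subspace}); the partial transfer result of Theorem \ref{thm: norming in WLD subspace} applies only to WLD subspaces, whereas $Y=\C([0,\omega_1])$ is not even WLD. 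This is why the hands-on ziqqurat route seems unavoidable.
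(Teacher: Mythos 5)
Your handling of the ``in particular'' clause is exactly the paper's: the survey derives it from the first assertion via the isometric embedding $f\mapsto f\circ q$. For the first assertion, however, the survey gives no proof (it is quoted from \cite{RS C(K)}), and your proposal does not supply one either: it is a programme whose central step is announced rather than executed. The set-up is sound (restricting the functionals of a $\lambda$-norming M-basis of $\X$ to $Y=\C([0,\omega_1])$ does give a $\lambda$-norming subspace $W={\rm span}\{\psi_\alpha\}_{\alpha\in\Gamma}$ of $\mathcal{M}([0,\omega_1])$, and your remark that Theorem \ref{thm: norming in WLD subspace} is unavailable here is correct), but the transfinite ``pressing-down/stabilisation'' that is supposed to produce the free block structure and the estimate $\sup\{|\langle\psi,z_N\rangle|\colon \psi\in W,\ \|\psi\|\leq1\}\leq C/N$ is precisely the content of the theorem, and you give no argument for it.

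More seriously, the hypotheses you actually put on the table cannot yield a contradiction. You propose to exploit only that $W$ is $\lambda$-norming, that each $\psi_\alpha$ is a countably supported atomic measure, and that every $f\in Y$ is eventually constant. All of these are satisfied by $W=\mathcal{M}([0,\omega_1])=Y^*$ itself, which is $1$-norming and is the span of its elements, each a countably supported atomic measure; so no construction based on those properties alone can succeed. The indispensable extra structure inherited from the M-basis upstairs is the dual countable-support condition: for every $f\in\X$, hence for every $f\in Y$, the set $\{\alpha\in\Gamma\colon \langle\p_\alpha,f\rangle\neq0\}$ is countable. It is this uncountable almost-disjointness between the family $\{\psi_\alpha\}_{\alpha\in\Gamma}$ and the vectors of $Y$ that the stabilisation arguments of \cite{AP} and \cite{RS C(K)} leverage (together with a pressing-down selection of the atoms of the $\psi_\alpha$) in order to anchor a ziqqurat on which every normalised element of $W$ has small action. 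Until you isolate and use that condition, and actually carry out the recursion, the proof of the first assertion is missing.
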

The second clause follows from the first one and the standard fact that if $\p\colon \K\to [0,\omega_1]$ is a continuous surjection, then the map $f\mapsto f\circ \p$ defines an isometric embedding of $\C([0,\omega_1])$ in $\C(\K)$. The advantage of Theorem \ref{thm: AP-RS} over Theorem \ref{thm: AP} will be witnessed in several instances in what follows. As a first sample, we immediately get that Banach spaces such as $\C([0,\eta])$ for $\eta\geq \omega_1$, $\C([0,\omega_1]\times [0,1])$, $\C([0,\omega_1]^2)$, or $\C([0,2\omega_1])$ do not admit norming M-bases (recall that $\C([0,2\omega_1])= \C([0,\omega_1]) \oplus \C([0,\omega_1])$ is not isomorphic to $\C([0,\omega_1])$, \cite{Semadeni}). However, see the discussion in Section \ref{sec: subspace}. \smallskip

Let us focus on the good news first. An important class for which we can answer Problem \ref{probl: Godefroy for C(K)} is the class of compact trees. Trees are an interesting class of compacta because on the one hand they are a well-behaved generali\-sation of ordinals and on the other hand this bigger flexibility already allows for several interesting examples. As instances of this let us mention some recent results in topology \cite{Nyikos, RS tree, JS tree1, JS tree2} and Haydon's celebrated results in renorming \cite{Haydon1, Haydon2, Haydon3}. Here we adopt the set-theoretic definition of tree as a poset where the set of predecessors of each element is well-ordered. There are several natural topologies that one can consider on a tree, leading to rather different behaviours, see, \emph{e.g.}, \cite{Nyikos tree}. For our purposes, the correct choice is the so-called coarse-wedge topology (while in renorming theory, as in Haydon's papers, the interval topology is the suitable one).

It is not difficult to see that if a tree $\mathcal{T}$ has height at most $\omega_1$, then it is Corson (because every element of the tree actually has countable height), \cite[Theorem 2.8]{Nyikos}. On the other hand, if the height of $\mathcal{T}$ is at least $\omega_1+1$, we can take an element $t$ with height $\omega_1$ and the map $s\mapsto s\wedge t$ is a continuous surjection from $\mathcal{T}$ onto $[0,\omega_1]$ (we refer to \cite[Section 5]{RS C(K)} for further details). Consequently, Problem \ref{probl: Godefroy for C(K)} has a positive answer when $\K$ is a tree with the coarse-wedge topology. Incidentally, a positive answer was also given for adequate compacta in \cite[Section 5]{HRST}. \smallskip

Now, to the bad news. Problem \ref{probl: Godefroy for C(K)} would be solved for all Valdivia compacta if all scattered Valdivia compacta that contain $[0,\omega_1]$ admitted a continuous surjection onto it. However, \cite[Theorem 4.3]{RS C(K)} claims that, for a scattered Valdivia compactum $\K$, there is a continuous map from $\K$ onto $[0,\omega_1]$ if and only if $\K$ admits a P-point. As it turns out, the compactum $\K_\ro$ that we described in Section \ref{sec: WCG} fails to admit a P-point and therefore does not admit continuous surjections onto $[0,\omega_1]$ (we will give the definition of P-point and the justification of this claim in Section \ref{sec: semi-Eberlein}). This also explains why Problem \ref{probl: norming in Kro} is open and particularly relevant.

Incidentally, the scattered Valdivia compacta that contain $[0,\omega_1]$, but do not admit continuous surjections onto $[0,\omega_1]$ must necessarily contain weak P-points and fail to contain P-points, see \cite[Remark 4.7]{RS C(K)}. Therefore, they have to resemble the space $\K_\ro$, which once more hints at the importance of such a construction for Godefroy's problem. \smallskip

As we saw, the methods from \cite{AP, RS C(K)} alone are not efficient enough to yield a positive answer to Problem \ref{probl: Godefroy for C(K)} in its full generality. Hence, to conclude our discussion of the problem we shall highlight two problems whose positive answer would solve the main question in the positive, in combination with \cite{RS C(K)}. The first one concerns the assumption on $\K$ to be Valdivia that we made above.

\begin{problem}\label{probl: norming => Valdivia} Let $\K$ be a scattered compactum such that $\C(\K)$ admits a (countably) norming M-basis. Does it follow that $\K$ is Valdivia?
\end{problem}
Recall that by \cite[Theorem 5.3]{Kalenda survey}, this would be equivalent to the fact that $(B_{\C(\K)^*}, w^*)$ is Valdivia. For a general Banach space it is known that the existence of a countably norming M-basis does not imply that the dual ball is Valdivia; in fact, in the already mentioned \cite{Kalenda renorm Valdivia} Kalenda actually proved that the dual ball of $\C_0([0,\omega_1])$ is not Valdivia. However, here we are only asking about $\C(\K)$ spaces and the main concern is on norming M-bases.

The second problem is related to the containment of $[0,\omega_1]$ in $\K$.
\begin{problem}\label{probl: extension op} Let $\K$ be a scattered Valdivia compactum that contains $[0,\omega_1]$. Is there a linear extension operator from $\C([0,\omega_1])$ to $\C(\K)$?

More generally, does $\C([0,\omega_1])$ embed into $\C(\K)$?
\end{problem}

\begin{remark} A positive answer to both Problem \ref{probl: norming => Valdivia} and Problem \ref{probl: extension op} would answer in the positive Godefroy's problem for $\C(\K)$ spaces, Problem \ref{probl: Godefroy for C(K)}. Indeed, by the positive answer to the first problem, one can assume that $\K$ is Valdivia; then one has to distinguish two cases. If $\K$ does not contain $[0,\omega_1]$, it is Eberlein by Alster's \cite{Alster} and Deville--Godefroy's \cite{DG} results that we mentioned previously in the section. If $\K$ contains $[0,\omega_1]$, the positive answer to the second problem contradicts Theorem \ref{thm: AP-RS}.

By the same argument, the positive answer to Problem \ref{probl: extension op} alone implies a positive answer to Godefroy's problem under the additional assumption that $\K$ is Valdivia.
\end{remark}

To conclude this discussion, let us mention that Problem \ref{probl: extension op} has a negative answer if either of the two assumptions is removed. We are grateful to Antonio Avil\'es and Grzegorz Plebanek for pointing out the following examples to us.
\begin{example} Both examples depend on the fact that if $\K$ is c.c.c. (\emph{i.e.}, if every collection of mutually disjoint non-empty open subsets of $\K$ is countable), then $\C([0,\omega_1])$ does not embed in $\C(\K)$. Indeed, $\K$ being c.c.c. is equivalent to $\C(\K)$ not containing $c_0(\omega_1)$, \cite[Theorem 14.26]{FHHMZ}, while $\C([0,\omega_1])$ plainly does contain it (for instance, as $[0,\omega_1]$ is not c.c.c.).

For the first example it is then enough to take the cube $\{0,1\}^{\omega_1}$, which is a Valdivia compactum containing $[0,\omega_1]$. Moreover, a standard application of the Delta system lemma shows that $\{0,1\}^{\omega_1}$ is c.c.c.. Alternatively, $\C([0,\omega_1])$ does not embed in $\C(\{0,1\}^{\omega_1})$, because the latter has a norming M-basis, by \cite[Theorem 5.1]{HRST}.

The second example is less elementary and it requires the existence of a compactification $\K= \gamma\omega$ of $\omega$ whose remainder $\gamma\omega\setminus \omega$ is $[0,\omega_1]$. This is an example from \cite{FR compactification}, based on Magill's theorem \cite[Theorem 2.1]{Magill}; see \cite[pp. 141--145]{Walker} for details. Then it is clear that $\K$ is scattered and separable, hence c.c.c..
\end{example}

\section{The heredity problem}\label{sec: subspace}
In the previous section a central role was played by Alexandrov--Plichko's theorem (Theorem \ref{thm: AP}) that the Banach space $\C([0,\omega_1])$ does not admit a norming M-basis and its generalisation from \cite{RS C(K)} that it does not embed in a Banach space with norming M-basis. While the latter result is obviously formally stronger, it is apparently not known whether admitting a norming M-basis and embedding in a Banach space with norming M-bases are distinct conditions in general. In other words, to the best of the authors' knowledge, the following problem is still open.
\begin{problem}\label{probl: norming in subspace} Let $\X$ be a Banach space with norming M-basis and $\Z$ be a subspace of $\X$. Must $\Z$ admit a norming M-basis as well?
\end{problem}
In this section we will discuss some particular cases where a positive answer is available and some related questions. We also comment on a related question concerning countably norming M-bases, namely the subspace problem for Plichko spaces.

To begin with, it follows from Theorem \ref{thm: shrinking M-basis} that the existence of a shrinking M-basis passes to subspaces (because both the WLD and the Asplund classes are hereditary). On the other hand, a result of Plichko \cite[Theorem 2]{Plichko} (see, \emph{e.g.}, \cite[Theorem 4.63]{FHHMZ}) claims that the dual of every separable Banach space embeds in a space with an M-basis; in particular, $\ell_\infty$ is a subspace of a Banach space with M-basis. Therefore, the existence of M-bases does not pass to subspaces.

\begin{remark} In contrast to Plichko's result, $\ell_\infty$ does not embed in a Banach space with a norming M-basis. Indeed, as we saw in Section \ref{sec: WCG}, Banach spaces with norming M-basis admit an LUR norm, while $\ell_\infty$ does not have such a norm \cite[Theorem II.7.10]{DGZ}.
\end{remark}

By Theorem \ref{thm: separable M-basis}, the answer to Problem \ref{probl: norming in subspace} is clearly positive under the assumption that $\Z$ is separable. It turns out that this can be substantially generalised to the following result, that we mentioned already in previous sections.

\begin{theorem}[\cite{VWZ}]\label{thm: norming in WLD subspace} Let $\X$ be a Banach space with $\lambda$-norming M-basis and $\Z$ be a WLD subspace. Then $\Z$ admits a $\lambda$-norming M-basis.
\end{theorem}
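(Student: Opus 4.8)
The plan is to transport the given norming structure from $\X$ to $\Z$ by restriction and then upgrade the resulting norming subspace of $\Z^*$ into an actual M-basis, using the Corson structure of $(B_{\Z^*},w^*)$. First I would restrict the coefficient functionals. Writing $W:={\rm span}\{\p_\alpha\cut_\Z\}\subseteq\Z^*$, I claim $W$ is $\lambda$-norming for $\Z$ with the very same constant. Indeed, for $z\in\Z$ one has $\|z\|_\Z=\|z\|_\X$, and for every $\p\in{\rm span}\{\p_\alpha\}$ with $\|\p\|\leq1$ the restriction $\p\cut_\Z$ lies in $W$ with $\|\p\cut_\Z\|\leq1$ and $\langle\p,z\rangle=\langle\p\cut_\Z,z\rangle$; hence the supremum defining the $\lambda$-norming property of $\{\p_\alpha\}$ over $\X$ is dominated by the corresponding supremum over $W$. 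Thus the theorem reduces to the self-contained statement that a WLD space $\Z$ carrying a $\lambda$-norming subspace $W\subseteq\Z^*$ admits an M-basis whose biorthogonal functionals span a $\lambda$-norming subspace.

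To produce such an M-basis I would run the projectional-generator machinery subordinated to $W$. Since $\Z$ is WLD, fix an M-basis $\{g_\beta;\theta_\beta\}_{\beta\in B}$ that countably supports $\Z^*$, so that ${\rm supp}(\psi):=\{\beta\colon\langle\psi,g_\beta\rangle\neq0\}$ is countable for every $\psi\in\Z^*$. Define $\Phi\colon W\to[\Z]^{\leq\omega}$ by $\Phi(\psi):=\{g_\beta\colon\beta\in{\rm supp}(\psi)\}$. This $\Phi$ is a projectional generator on $(\Z,W)$: given a $\mathbb{Q}$-linear subspace $D\subseteq W$ and $\psi'\in\overline{D}^{\,w^*}$ annihilating $\bigcup_{\psi\in D}\Phi(\psi)$, one has $\langle\psi',g_\beta\rangle=0$ for $\beta\in\bigcup_{\psi\in D}{\rm supp}(\psi)$ by hypothesis and for the remaining $\beta$ by $w^*$-continuity of $\psi\mapsto\langle\psi,g_\beta\rangle$; since $\{g_\beta\}$ is fundamental this forces $\psi'=0$. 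The standard theory then yields a PRI $\{P_\eta\colon\omega\leq\eta\leq\mu\}$ on $\Z$, with $\mu={\rm dens}\,\Z$, that is subordinated to $W$ (so $P_\eta^*\Z^*\subseteq\overline{W}^{\,w^*}$) and has separable increments $R_\eta:=(P_{\eta+1}-P_\eta)\Z$.

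On each separable piece $R_\eta$ the restriction $W\cut_{R_\eta}$ is again $\lambda$-norming, by the computation of the first step. Using Theorem \ref{thm: separable M-basis} I would build an M-basis $\{e^\eta_i;f^\eta_i\}_i$ of $R_\eta$ whose functional span equals the span of a suitably chosen $\lambda$-norming sequence drawn from $W\cut_{R_\eta}$ (for a dense $(x_k)\subseteq R_\eta$ pick $\psi_k\in W$ with $\|\psi_k\cut_{R_\eta}\|\leq1$ and $\langle\psi_k,x_k\rangle\geq(\lambda-1/k)\|x_k\|$). Lifting through the increment projection, $e^\eta_i\in\Z$ and $\tilde f^\eta_i:=f^\eta_i\circ(P_{\eta+1}-P_\eta)$, the nesting relations of the PRI give biorthogonality across distinct pieces, while the density of $\bigcup_\eta R_\eta$ in $\Z$ gives fundamentality; hence $\{e^\eta_i;\tilde f^\eta_i\}_{\eta,i}$ is an M-basis of $\Z$ with functionals in $\overline{W}^{\,w^*}$.

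The hard part will be to certify that the glued M-basis is $\lambda$-norming with \emph{exactly} the original constant, rather than a degraded one. The difficulty is that biorthogonality forces the lifted functionals to vanish on every other piece, which pins them down to $\tilde f^\eta_i=f^\eta_i\circ(P_{\eta+1}-P_\eta)$; since the increment projections only satisfy $\|P_{\eta+1}-P_\eta\|\leq2$ in general, a naive estimate loses a factor of $2$ and yields merely $\tfrac{\lambda}{2}$-norming. To recover the sharp constant I would argue globally rather than piece by piece: by \eqref{eq: norming in dual ball} it suffices to show that every $\psi\in W$ with $\|\psi\|\leq1$ is a $w^*$-limit of norm-$\leq1$ elements of ${\rm span}\{\tilde f^\eta_i\}$. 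Here one exploits that such a $\psi$ is countably supported and subordinated to the PRI, hence determined by its action on countably many separable pieces, on each of which the chosen $f^\eta_i$ are $\lambda$-norming and total; the subordination $P_\eta^*\Z^*\subseteq\overline{W}^{\,w^*}$ is precisely what keeps the approximating functionals inside the correct ball. Making this $w^*$-approximation simultaneously norm-controlled is the crux of the argument, and the place where the full strength of the WLD (Corson) hypothesis on $\Z$ is used.
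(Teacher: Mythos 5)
Your first step --- restricting the coefficient functionals to obtain a $\lambda$-norming subspace $W={\rm span}\{\p_\alpha\cut_\Z\}$ of $\Z^*$, with the same constant --- agrees with the paper, and your overall strategy (a PRI subordinated to $W$ built from a projectional generator, separable increments, Theorem \ref{thm: separable M-basis} on each piece, and gluing) is the same machinery the paper invokes. But the argument has a genuine gap exactly where you flag it: you never establish that the glued system is $\lambda$-norming rather than $\tfrac{\lambda}{2}$-norming, and your final paragraph is a description of what would have to be proved, not a proof. Two concrete obstructions. First, the lifted functionals $\tilde f^\eta_i=f^\eta_i\circ(P_{\eta+1}-P_\eta)$ need not lie in $W$ at all: a PRI obtained from a projectional generator only gives $P_\eta^*\Z^*\subseteq\overline{{\rm span}}^{\,w^*}(B_\eta)$ for suitable countable $B_\eta\subseteq W$, and $\psi\circ P_\eta$ for $\psi\in W$ is generally not a \emph{finite} linear combination of the $\p_\alpha\cut_\Z$. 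Second, even granting membership in $\overline{W}^{\,w^*}$, the statement you reduce to --- that every $\psi\in W$ with $\|\psi\|\leq 1$ is a $w^*$-limit of norm-at-most-one elements of ${\rm span}\{\tilde f^\eta_i\}$ --- is precisely the norm-controlled approximation you acknowledge you cannot carry out; subordination to the PRI by itself gives no control on the norms of the approximants.

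The paper (following \cite[Theorem 2.3]{VWZ}) closes the argument in a way that avoids this issue entirely: the transfinite induction is arranged so that the linear span of the new biorthogonal functionals is \emph{equal}, as a subspace of $\Z^*$, to ${\rm span}\{\p_\alpha\cut_\Z\}$ --- a nonseparable analogue of the span-preservation clause in Theorem \ref{thm: separable M-basis}. Once the two spans coincide, the $\lambda$-norming property of the new M-basis is literally the $\lambda$-norming property of $W$; no estimate on any individual functional, and in particular no factor of $2$ from $\|P_{\eta+1}-P_\eta\|\leq 2$, ever enters. The two countable-support conditions \eqref{eq: countable supp in Z} and \eqref{eq: countable supp in Z*} --- the first coming from biorthogonality of the original M-basis of $\X$, the second from the WLD-ness of $\Z$ --- are exactly what makes this simultaneous, span-preserving orthogonalisation run by induction on the density character. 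You have identified all the right ingredients, but you replace the span equality by a quantitative $w^*$-approximation statement that is strictly harder and is left unproved; to repair the proof you should instead prove (or cite) the nonseparable Markushevich-type theorem asserting that under \eqref{eq: countable supp in Z} and \eqref{eq: countable supp in Z*} one can choose the M-basis of $\Z$ so that ${\rm span}\{f_\beta\}={\rm span}\{\p_\alpha\cut_\Z\}$ exactly.
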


As stated, the result appeared for the first time in \cite[Proposition 4.6]{VWZ} and \cite[Lemma 2.2]{Godefroy Rocky}. A weaker result, under the assumption that $\X$ is WCG, was proved in \cite[Proposition 6]{JZ Some notes WCG}; Vanderwerff \cite[Proposition 3.1]{Vanderwerff} proved the stronger claim that the $\lambda$-norming M-basis of $\Z$ can be extended to a $\lambda$-norming M-basis of $\X$. We sketch the proof, following \cite{Godefroy Rocky}.

\begin{proof} Let $\{e_\alpha; \p_\alpha\}_{\alpha\in \Gamma}$ be a $\lambda$-norming M-basis for $\X$. We first show that, without loss of generality, we can assume that $\lambda=1$. In fact, consider the equivalent norm on $\X$
\[ \nn{x}\coloneqq \sup \big\{|\langle\p,x\rangle|\colon \p\in {\rm span}\{\p_\alpha\}_{\alpha\in\Gamma},\, \|\p\|\leq 1 \big\}. \]
Then $\nn\cdot$ is an equivalent norm with $\lambda\n\leq \nn\cdot\leq \n$ and $\{e_\alpha; \p_\alpha\}_{\alpha\in \Gamma}$ is a $1$-norming M-basis in $(\X,\nn\cdot)$. Assuming the result for $\lambda=1$, it follows that $(\Z,\nn\cdot)$ admits a $1$-norming M-basis. It is easy to check that the same M-basis is $\lambda$-norming for $\Z$.

Thus, we assume that $\{e_\alpha; \p_\alpha\}_{\alpha\in \Gamma}$ is $1$-norming. Consider the functionals $g_\alpha\coloneqq \p_\alpha\cut_\Z$; clearly, ${\rm span}\{g_\alpha\}_{\alpha\in \Gamma}$ is $1$-norming for $\Z$. Moreover, by definition of M-basis, for every $z\in \Z$
\begin{equation}\label{eq: countable supp in Z}
    {\rm supp}(z)\coloneqq \{\alpha\in\Gamma\colon \langle g_\alpha, z\rangle\neq 0 \}\;\; \mbox{is at most countable.}
\end{equation}
Additionally, $\Z$ admits an M-basis $\{u_\beta; \psi_\beta\}_{\beta\in \Lambda}$; because $\Z$ is WLD, for all $\psi\in \Z^*$, the set
\begin{equation}\label{eq: countable supp in Z*}
    {\rm supp}(\psi)\coloneqq \{\beta\in\Lambda\colon \langle \psi, u_\beta\rangle\neq 0 \}\;\; \mbox{is also at most countable.}
\end{equation}
Conditions \eqref{eq: countable supp in Z} and \eqref{eq: countable supp in Z*} then imply the existence of an M-basis $\{x_\beta; f_\beta\}_{\beta\in \Lambda}$ in $\Z$ with the property that ${\rm span} \{x_\beta\}_{\beta\in \Lambda}= {\rm span} \{u_\beta\}_{\beta\in \Lambda}$ and ${\rm span} \{f_\beta\}_{\beta\in \Lambda}= {\rm span} \{g_\alpha\}_{\alpha\in \Gamma}$; in particular, ${\rm span} \{f_\beta\}_{\beta\in \Lambda}$ is $1$-norming, as desired. The proof of the existence of such an M-basis uses the well-known method to construct projectional resolutions of the identity by transfinite induction. However, its explanation would require too long of a digression, hence we omit the argument and we refer to \cite[Theorem 2.3]{VWZ} for the proof.
\end{proof}

\begin{remark} In particular, the WCG Banach space without norming M-basis from Theorem \ref{thm: H WCG not norming} does not even embed in a Banach space with norming M-basis. 
\end{remark}

Given Theorem \ref{thm: norming in WLD subspace}, the simplest candidate for a possible counterexample to Problem \ref{probl: norming in subspace} seems to be $\ell_1(\omega_1)$. Therefore, we isolate the following particular case.

\begin{problem} Does every subspace of $\ell_1(\Gamma)$, $\Gamma$ uncountable, admit a norming M-basis?
\end{problem}

In connection to this problem, let us recall that is not even known if subspaces of $\ell_1(\Gamma)$ are Plichko, which was asked by Kalenda in \cite[Question 4.44]{Kalenda survey}. Notice that hyperplanes, as well as closed sublattices, of $\ell_1(\Gamma)$ are $1$-Plichko, \cite[Example 4.39, Corollary 6.11]{Kalenda survey}.\smallskip

We now pass to the second part of the section where we briefly discuss the analogue to Problem \ref{probl: norming in subspace} for countably norming M-bases, or equivalently the heredity problem for Plichko spaces, that was asked for instance in \cite[Question 4.45]{Kalenda survey}.

\begin{problem}\label{probl: Plichko subspace} Let $\Z$ be a subspace of a Plichko Banach space $\X$. Must $\Z$ be Plichko?
\end{problem}

Differently from the problem for norming M-bases, this problem has been solved and a negative answer follows from Kubi\'s' results, \cite{Kubis retractions, Kubis subspace}. The solution is based upon two ingredients. The first one, of topological nature, is the result from \cite[Theorem 5.8]{Kubis retractions}, containing the construction of a $0$-dimensional linearly ordered Valdivia compact space $\mathcal{V}_{\omega_1}$ such that every linearly ordered Valdivia compactum is order preserving image of $\mathcal{V}_{\omega_1}$ (the notation $\mathcal{V}_{\omega_1}$ actually comes from \cite{Kubis subspace}); here it is perhaps worth noting that linearly ordered Valdivia compacta have weight at most $\omega_1$, \cite[Proposition 5.5]{Kubis retractions}. Then $\C(\mathcal{V}_{\omega_1})$ is $1$-Plichko, by \cite[Theorem 5.2]{Kalenda survey}. Now, consider the `connectification' $\K_{\omega_1}$ of $\mathcal{V}_{\omega_1}$ (see \cite[Section 5]{Kubis subspace} for the definition), which is a (two-to-one) order preserving continuous image of $\mathcal{V}_{\omega_1}$. Therefore, it follows that $\C(\K_{\omega_1})$ is a subspace of $\C(\mathcal{V}_{\omega_1})$. Finally, the last ingredient is the fact that $\C(\K_{\omega_1})$ is not Plichko, \cite[Theorem 5.1]{Kubis subspace}. Therefore, Plichko spaces are not closed under taking subspaces.

\begin{remark} At this point, it is of course natural to ask whether $\C(\mathcal{V}_{\omega_1})$ could admit a norming M-basis, since this would solve Problem \ref{probl: norming in subspace}. However, by the very property of $\mathcal{V}_{\omega_1}$, $[0,\omega_1]$ is a continuous image of $\mathcal{V}_{\omega_1}$, therefore $\C([0,\omega_1])$ is a subspace of $\C(\mathcal{V}_{\omega_1})$. Hence the latter does not admit a norming M-basis, due to Theorem \ref{thm: AP-RS}.

However, we can actually say more. In fact, another property of $\mathcal{V}_{\omega_1}$ is that it is `saturated' by copies of $[0,\omega_1]$, \cite[Theorem 5.8]{Kubis retractions}. Moreover, it is easy to see that every closed subset of a linearly ordered compactum admits a regular extension operator, \cite[Lemma 4.2]{Kubis subspace}. Therefore, $\C([0,\omega_1])$ is even $1$-complemented in $\C(\mathcal{V}_{\omega_1})$.
\end{remark}

On the other hand, some particular cases of Problem \ref{probl: Plichko subspace} are still open. To begin with, let us recall the following analogue of the classical problem whether being WCG is inherited from the bidual.
\begin{problem}[{\cite[Question 6.14]{Kalenda survey}}] Let $\X$ be a Banach space such that $\X^{**}$ is Plichko. Must $\X$ be Plichko? 
\end{problem}

The most important particular case of the heredity problem for Plichko spaces is the complemented subspace problem for Plichko spaces that we focus on next.
\begin{problem}\label{probl: complemented of Plichko} Let $\Z$ be a complemented subspace of a Plichko space $\X$. Must $\Z$ be Plichko?
\end{problem}

A positive answer to this is known under the additional assumption that $\X/ \Z$ is separable, \cite[Theorem 4.40]{Kalenda survey}. Moreover, Kubi\'s proved that every $1$-complemented subspace of a $1$-Plichko Banach space of density $\omega_1$ is $1$-Plichko as well, \cite[Theorem 6.3]{Kubis subspace}.

There is a connection between Problem \ref{probl: complemented of Plichko} and Theorem \ref{thm: AP} that we would like to emphasise, based on Kalenda's result that $\C([0,\omega_2])$ is not Plichko. More precisely, Kalenda \cite{Kalenda omega2} proved that the Banach space $\C([0,\eta])$ does not admit a countably norming M-basis for all ordinals $\kappa\leq \eta< \kappa\cdot \omega$, where $\kappa$ is a regular cardinal. However, it is not known whether the result holds for all $\eta\geq \omega_2$. A negative answer would be a strong counterexample to Problem \ref{probl: complemented of Plichko}, since $\C([0,\omega_2])$ is clearly a complemented subspace of $\C([0,\eta])$. Therefore, we reiterate the following question.
\begin{problem}[Kalenda, \cite{Kalenda omega2}] Is $\C([0,\eta])$ Plichko only for $\eta< \omega_2$?
\end{problem}

Notice that $\C([0,\eta])$ is not $1$-Plichko when $\eta\geq\omega_2$, because $[0,\eta]$ is not Valdivia for $\eta\geq \omega_2$ by \cite[Example 1.10]{Kalenda survey} and then the claim follows from \cite[Theorem 5.3]{Kalenda survey}; see also \cite[Theorem 5.10]{Kalenda PLMS}.

One might wonder whether, similarly as Section \ref{sec: C(K)}, one could improve Kalenda's result \cite{Kalenda omega2} by proving that $\C([0,\omega_2])$ does not embed in a Plichko space. At the moment, Kalenda's proof does not seem to give this stronger claim, therefore the following problem is also open.
\begin{problem}[\cite{Kalenda omega2, Kubis skeleton}] Does $\C([0,\omega_2])$ embed in a Plichko space?
\end{problem}

To conclude the section, let us return to Problem \ref{probl: norming in subspace} and observe that the analogue to Problem \ref{probl: complemented of Plichko} is apparently also open for norming M-bases.
\begin{problem} Let $\X$ be a Banach space with norming M-basis and $\Z$ be a complemented subspace of $\X$. Must $\Z$ admit a norming M-basis?    
\end{problem}

\section{Semi-Eberlein compacta}\label{sec: semi-Eberlein}
As we saw in Section \ref{sec: non-sep classes}, the map $\ev$ from \eqref{eq: ev map} allows one to characterise several classes of non-separable Banach spaces in terms of topological properties of the dual unit ball with the $w^*$-topology. In this section we focus on the class of compact spaces whose definition is obtained by describing the image of $\ev$ when the M-basis is $1$-norming. Such compacta were introduced in \cite{KL}, where they are called semi-Eberlein compacta, and some results concerning them were recently obtained in \cite{AK_BLMS, CCS retraction, CRS, HRST}; still, their structure is rather poorly understood as of today.

\begin{definition}[\cite{KL}] A compact space is \emph{semi-Eberlein} if it is homeomorphic to a compact space $\K \subseteq [-1,1]^\Gamma$ such that $\K \cap c_0(\Gamma)$ is dense in $\K$.
\end{definition}

By the very definition, every Eberlein compactum is semi-Eberlein and every semi-Eberlein compactum is Valdivia. Moreover, every cube $[-1,1]^\Gamma$ is by definition semi-Eberlein and the ordinal interval $[0,\omega_1]$ is not semi-Eberlein, as we will discuss; thus these three classes are distinct. The table below illustrates the fact that semi-Eberlein compacta are obtained from Eberlein compacta via the same generalisation that leads from Corson to Valdivia compacta.

\begin{multicols}{2}\begin{center}
    A compact space is ...
    
    \columnbreak
    if it is homeomorphic to\\ $\K\subseteq [-1,1]^\Gamma$ such that ...
    \end{center}
\end{multicols}
\vspace{-3em}
\begin{multicols}{2}\begin{center}
    $${\xymatrix{ \text{ Valdivia } & \text{ semi-Eberlein } \ar@{_{(}->}[l]\\
    \text{ Corson } \ar@{^{(}->}[u] & \text{ Eberlein } \ar@{^{(}->}[u]\ar@{_{(}->}[l] }}$$
    \vspace{.3em}
    
    \columnbreak
    $${\xymatrix{ \underset{\text{is dense in }\K}{\K\cap \Sigma(\Gamma)}  & \; \underset{\text{is dense in }\K}{\K\cap c_0(\Gamma)} \ar@{_{(}->}[l] \\ 
    \K\subseteq \Sigma(\Gamma) \ar@{^{(}->}[u] & \;\K\subseteq c_0(\Gamma) \ar@{^{(}->}[u]\ar@{_{(}->}[l] }}$$
    \end{center}
\end{multicols}

Let us start once more by considering $[0,\omega_1]$. In order to explain why this compact space is not semi-Eberlein, we need to recall some topological notions. Recall that a point $p$ in a topological space $\mathcal{T}$ is a \emph{P-point} if $p$ is not isolated and for every countable family $(\mathcal{U}_k)_{k=1}^\infty$ of neighbourhoods of $p$, $\cap_{k=1}^\infty \mathcal{U}_k$ is a neighbourhood of $p$. A point $p\in \mathcal{T}$ is a \textit{weak P-point} if $p$ is not isolated and it is not a limit point of any countable set in $\mathcal{T} \setminus \{p\}$. An archetypal example of P-point is the point $\omega_1$ in $[0,\omega_1]$.

Kubi\'s and Leiderman proved in \cite[Theorem 4.2]{KL} that semi-Eberlein compacta do not admit P-points, whence it follows that $[0,\omega_1]$ fails to be semi-Eberlein. Incidentally, this result can be combined with a forcing argument to give an example of a Corson compactum that is not semi-Eberlein, \cite[Example 5.5]{KL}. It was then natural to ask whether semi-Eberlein compacta could admit weak P-points, \cite[Question 6.1]{KL}, which was answered in \cite[Section 4.3]{HRST}, once more by consideration of $\K_\ro$.

\begin{remark}\label{rmk: weak P point in Kro} By definition, the set $\F_\ro$ consisting of finite sets is dense in $\K_\ro$, thus the canonical embedding of $\K_\ro$ in $\{0,1\}^{\omega_1}$ witnesses that $\K_\ro$ is semi-Eberlein. In particular it admits no P-points, which is actually easy to check directly. However, the set $[0,\omega_1)$ is a weak P-point in $\K_\ro$, because every set in $\K_\ro\setminus\{[0,\omega_1)\}$ is countable. In this sense $\K_\ro$ behaves roughly like the ordinal interval $[0,\omega_1]$.
\end{remark}

Let us now turn to the consideration of semi-Eberlein dual balls. For instance, the dual ball of $\ell_1(\Gamma)$ is semi-Eberlein, as we already noted. Moreover, when $\X$ is WCG (or, more generally, subspace of WCG), $(B_{\X^*}, w^*)$ is semi-Eberlein as well. In particular, by taking $\X$ to be the Banach space from Theorem \ref{thm: H WCG not norming}, we see that $(B_{\X^*}, w^*)$ is semi-Eberlein and yet $\X$ does not admit any norming M-basis. Concerning the opposite implication, we have the following remark.

\begin{remark}\label{rmk: 1-norming gives semi-Eberlein} Assume that $\X$ admits a $1$-norming M-basis $\{e_\alpha; \p_\alpha\}_{\alpha\in \Gamma}$ and consider the embedding $\ev\colon (B_{\X^*},w^*)\to [-1,1]^\Gamma$. Clearly, 
\[ \ev[{\rm span}\{ \p_\alpha\}_{\alpha\in \Gamma}\cap B_{\X^*}]\subseteq c_{00}(\Gamma)\cap [-1,1]^\Gamma. \]
Therefore, \eqref{eq: norming in dual ball} shows that $(B_{\X^*},w^*)$ is semi-Eberlein. As we just observed, the converse fails to hold.
\end{remark}

To the best of our knowledge, this is all what is known concerning semi-Eberlein dual balls: known examples of semi-Eberlein dual balls either are actually Eberlein, or they come from $1$-norming M-bases. The main open problem here is clearly the one to characterise those Banach spaces whose dual ball is semi-Eberlein. Here we list a few more specific problems.
\begin{problem}\begin{enumerate}
    \item Let $\X$ be a Banach space with norming M-basis. Must $(B_{\X^*},w^*)$ be semi-Eberlein?
    \item Assume that $(B_{\X^*},w^*)$ is semi-Eberlein for every renorming of $\X$. Is $\X$ a subspace of a WCG Banach space?
    \item Are the dual balls of $\C([0,\omega_1])$ or $\C(\K_\ro)$ semi-Eberlein?
\end{enumerate}
\end{problem}

Recall that the dual ball of $\C_0([0,\omega_1])$ is not Valdivia, \cite{Kalenda renorm Valdivia}; thus the first problem has a negative answer for countably norming M-bases and Valdivia compacta. Concerning (2), note that if $\X$ is a subspace of a WCG space, its dual ball is Eberlein (and this holds under every renorming).

\begin{remark} Notice that in Remark \ref{rmk: 1-norming gives semi-Eberlein} we actually proved that $(B_{\X^*},w^*)$ embeds in $[-1,1]^\Gamma$ in such a way that $B_{\X^*}\cap c_{00}(\Gamma)$ is dense in $(B_{\X^*},w^*)$. Therefore it is natural to ask whether this behaviour of density of finitely supported vectors is common to all semi-Eberlein spaces; this problem was already asked in \cite[Problem 6.4]{KL}.
\end{remark}

\begin{problem} Let $\K$ be a semi-Eberlein compactum. Is there a homeomorphic embedding $\p\colon \K\to [-1,1]^\Gamma$ such that $\p(\K)\cap c_{00}(\Gamma)$ is dense in $\p(\K)$?

In particular, is the property true in the following cases?
\begin{enumerate}
    \item $\K$ is scattered;
    \item $\K$ is actually Eberlein, or even uniform Eberlein;
    \item in particular, $\K$ is the compactum constructed in \cite{Hajek}, see Theorem \ref{thm: K for no norming};
    \item $\K$ is the dual ball of a Banach space.
\end{enumerate}
\end{problem}

Notice that a positive answer to the problem has been recently given for $\K$ metrisable, \cite[Proposition 6.5]{MPZ}. Further, while our paper was in press, Avil\'es and Krupski solved (2) in the negative \cite[Example 4.10]{AK_BLMS}, thereby also answering \cite[Problem 6.4]{KL}. Concerning (1), recall that an Eberlein compactum is scattered if and only if it is \emph{strong Eberlein}, namely it is homeomorphic to a compact subset of $c_{00}(\Gamma)\cap \{0,1\}^\Gamma$, \cite{Alster}. This does not extend to semi-Eberlein compacta, since $\{0,1\}^\Gamma$ is not scattered, but $c_{00}(\Gamma)\cap \{0,1\}^\Gamma$ is clearly dense in $\{0,1\}^\Gamma$. \smallskip

To conclude, let us point out that, as a rule, techniques used to prove results for Valdivia or Eberlein compacta do not admit obvious modifications for semi-Eberlein compacta. Perhaps the unique exception is a Rosenthal-type characterisation of semi-Eberlein compacta, \cite[Proposition 3.1]{KL}, analogous to \cite[Theorem 3.1]{Rosenthal} (see also \cite[Proposition 1.9]{Kalenda survey}). For instance, it is not known if semi-Eberlein compacta admit a similar decomposition as the one obtained by Farmaki \cite{Farmaki} for Eberlein compacta, see \cite[Question 47]{CCS retraction}. Very little is also known concerning continuous images of semi-Eberlein compacta; the most interesting result here is perhaps that the class of Corson semi-Eberlein compacta is stable under continuous images, \cite[Theorem C]{CCS retraction} (notice that the said class is strictly larger than that of Eberlein compacta, \cite[Section 3]{KL}, \cite{Talagrand}). Some other explicit questions can be found in \cite[Section 6]{KL} (note that Question 1 and the second part of Question 6 there have been answered in \cite{HRST} and \cite{CCS retraction} respectively).

Here we limit ourselves to asking the following problem, which is an analogue of a well-known open question due to Avil\'es and Kalenda \cite[Problem 16]{AK problems} for Valdivia compacta.
\begin{problem} Can a scattered semi-Eberlein compactum contain a copy of $[0,\omega_2]$?
\end{problem}

\medskip
{\bf Acknowledgements.} The authors wish to express their gratitude to Antonio Avil\'es, Mari\'an Fabian, Gilles Godefroy, Chris Lambie-Hanson, Gilles Lancien, Grzegorz Plebanek, and Jacopo Somaglia for several helpful remarks on various parts of the paper.


\end{document}